\documentclass[reqno, 10pt]{amsart}
\textheight=22cm
\textwidth=15cm
\hoffset=-1.1cm
\voffset=-1.1cm
 
\usepackage{amssymb}
\usepackage{amsthm}
\usepackage{amsmath}
\usepackage[colorlinks=true,
pdfstartview=FitV, linkcolor=blue, citecolor=green,
urlcolor=]{hyperref}

\usepackage{mathrsfs}

\usepackage[scr=dutchcal]{mathalfa}
\usepackage{accents}
\usepackage{mathtools}
\usepackage{enumerate}

\usepackage{tikz}
\usetikzlibrary{arrows,calc}
\usepackage{tikz-cd}

\usepackage{hyperref}
\usepackage[style=alphabetic,maxcitenames=5,maxbibnames=7]{biblatex}
\usepackage{todonotes}

\theoremstyle{plain}
\newtheorem{proposition}{Proposition}[section]
\newtheorem{theorem}[proposition]{Theorem}
\newtheorem{lemma}[proposition]{Lemma}
\newtheorem{corollary}[proposition]{Corollary}
\newtheorem{observation}[proposition]{Observation}
\newtheorem{question}[proposition]{Question}

\theoremstyle{definition}

\newtheorem{definition}[proposition]{Definition}

\theoremstyle{remark}
\newtheorem{remark}[proposition]{Remark}

\DeclareMathOperator{\Aut}{Aut}

\DeclareMathOperator{\Gr}{\mathsf{Gr}}

\DeclareMathOperator{\SL}{\mathsf{SL}}
\DeclareMathOperator{\GL}{\mathsf{GL}}

\DeclareMathOperator{\PSL}{\mathsf{PSL}}
\DeclareMathOperator{\PGL}{\mathsf{PGL}}
\DeclareMathOperator{\PO}{\mathsf{PO}}
\DeclareMathOperator{\Sp}{\mathsf{Sp}}

\DeclareMathOperator{\Hc}{\mathcal{H}}

\DeclareMathOperator{\Pc}{\mathcal{P}}

\DeclareMathOperator{\Cb}{\mathbb{C}}

\DeclareMathOperator{\Hb}{\mathbb{H}}

\DeclareMathOperator{\Kb}{\mathbb{K}}
\DeclareMathOperator{\Nb}{\mathbb{N}}

\DeclareMathOperator{\Rb}{\mathbb{R}}

\DeclareMathOperator{\Zb}{\mathbb{Z}}

\DeclareMathOperator{\Psf}{\mathsf{P}}

\newcommand{\into}{\hookrightarrow}
\newcommand{\proj}{\mathbb{P}}
\newcommand{\peripherals}{\Pc}

\title{Topological restrictions on relatively Anosov representations}
\author{Konstantinos Tsouvalas and Feng Zhu }
\date{}
\AtEndDocument{\bigskip{\footnotesize%
\textsc{Max Planck Institute for Mathematics in the Sciences, Inselstra{\ss}e 22, 04103 Leipzig, Germany} \par  
 \textit{E-mail address}: \texttt{konstantinos.tsouvalas@mis.mpg.de}\par
 \medskip
\textsc{Department of Mathematics, University of Wisconsin – Madison, WI 53706, USA} \par  
\textit{E-mail address}: \texttt{fzhu52@wisc.edu}}}

\addbibresource{rel_nonAnosov.bib}

\begin{document}
\frenchspacing
\begin{abstract}
We obtain restrictions on which groups can admit relatively Anosov representations into specified target Lie groups, by examining the topology of possible Bowditch boundaries and how they interact with the Anosov limit maps. For instance, we prove that, up to finite index, any group admitting a relatively Anosov representation into $\SL_3(\Rb)$ is a free group or surface group, and any group admitting a relatively $k$-Anosov representation into $\Sp_{2m}(\Rb)$, where $k$ is an odd integer between 1 and $m$, is a surface group or a free product of nilpotent groups. 

We also obtain a characterization of groups admitting relatively 1-Anosov representations into $\SL_4(\Rb)$, general bounds on the dimension of the Bowditch boundary of groups admitting relatively Anosov representations into $\SL_d(\Rb)$, statements relating spheres in the Bowditch boundary to the (non-)existence of relatively Anosov representations, and a characterization of groups of cohomological dimension at least $d-1$ admitting relatively 1-Anosov representations into $\SL_d(\Rb)$.
\end{abstract}

\maketitle

\section{Introduction}

In this paper, we study topological restrictions on relatively hyperbolic groups admitting relatively Anosov
representations into $\SL_d(\Rb)$. 
Relatively Anosov representations, introduced in \cite{KapovichLeeb} and \cite{reldomreps} (see also \cite{ZZ1} and \cite{Tianqi}), provide a higher-rank analogue of geometrically finite representations into rank-one Lie groups.
The construction and theory of these representations is inspired by the theory of Anosov representations, which were introduced by Labourie \cite{Lab} and Guichard--Wienhard \cite{GW} and provide a higher-rank analogue of convex cocompact representations into rank-one Lie groups. 

In many cases, Anosov representations come from convex cocompact actions on properly convex domains in real projective spaces \cite{DGK,Zimmer}. Similarly, given any relatively Anosov representation $\rho\colon (\Gamma,\peripherals) \to \PGL_d(\Rb)$ whose image preserves a properly convex domain $\Omega \subset \proj(\Rb^d)$, there is an action of $\rho(\Gamma)$ on a properly convex domain $\Omega' \subset \proj(\Rb^d)$ which is geometrically finite in an appropriate sense (\cite[Prop.\ 1.13]{ZZ2}, see also \cite{rAGF_Mitul}.)

Many examples of Anosov representations 
come from embeddings of rank-one lattices---or more generally convex cocompact subgroups of rank-one Lie groups---into larger Lie groups and deformations of these embeddings.
There are other sources of Anosov representations, for example Coxeter groups \cite{DGKLM}, fundamental groups of certain Gromov--Thurston manifolds \cite{KapovichGT}, and, more broadly, cubulated hyperbolic groups \cite{DFWZ}.
Nevertheless, most of these other domain groups are only known to admit Anosov representations into Lie groups of high dimension relative to the complexity of the domain group. 

Restrictions on Anosov representations were established in \cite{Tsouvalas,CanaryTsouvalas}. In essence, these results state that restricting the complexity of the domain hyperbolic group $\Gamma$ relative to the complexity of the target Lie group $\mathsf{G}$ constrains what the domain group can be. Here, complexity of the domain can be measured in terms of cohomological dimension, and complexity of the target in terms of $\dim \mathsf{G}$ and/or the size of the parabolic subgroup $\mathsf{P} < \mathsf{G}$ with respect to which we have the Anosov condition. Very informally speaking, the smaller the Lie group and/or the larger the parabolic subgroup, the smaller the resulting flag space $\mathsf{G}/\mathsf{P}$, and the less complexity the limit maps associated to the Anosov representation can exhibit.
When the target group is $\SL_3(\Rb)$, for instance, the only groups admitting Anosov representations are surface groups and free groups. Here and below, by a {\em surface group}, we mean the fundamental group of a closed hyperbolic surface.  When the target group is $\SL_4(\Rb)$, any group admitting a projective Anosov representation is a convex cocompact subgroup of $\PO(1,3)$.

Analogously, many known examples of relatively Anosov representations originate from geometrically finite subgroups of rank-one Lie groups.
There are other known examples, for instance free products of nilpotent factors \cite[Prop.\ 1.19]{ZZ2}. However, as in the Anosov case, all of the known examples where the domain groups are not geometrically finite subgroups of rank-one Lie groups require the target Lie groups to have high dimension.

We can thus ask if the results in \cite{Tsouvalas, CanaryTsouvalas} have relative analogues. In other words, what restrictions are placed on a relatively hyperbolic group if we ask for it to admit a relatively Anosov representation into a fixed semisimple Lie group, with respect to a fixed parabolic subgroup?

Our first main result is the following characterization of the domain groups of relatively Anosov representations into $\mathsf{SL}_3(\mathbb{R})$. 

\begin{theorem}\label{thm:SL3R'} Let $\rho\colon (\Gamma,\mathcal{P}) \to \SL_3(\Rb)$ be a relatively $1$-Anosov representation. Then $\Gamma$ is virtually a free group or a surface group.
\end{theorem}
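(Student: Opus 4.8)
The plan is to reduce the relative statement to the absolute one: I will argue that the peripheral subgroups are forced to be virtually cyclic, so that the representation is really an Anosov representation, and then invoke the known restriction on Anosov representations into $\SL_3(\Rb)$ recalled above. First extract the data of $\rho$: there are continuous $\rho$-equivariant limit maps $\xi\colon \partial(\Gamma,\mathcal{P})\to\proj(\Rb^3)$ and $\xi^{\ast}\colon \partial(\Gamma,\mathcal{P})\to\Gr_2(\Rb^3)$ which are compatible ($\xi(x)\subset\xi^{\ast}(x)$), transverse ($\xi(x)\oplus\xi^{\ast}(y)=\Rb^3$ for $x\neq y$), and dynamics preserving. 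We may assume $\Gamma$ is not virtually cyclic. Fix an infinite $P\in\mathcal{P}$ with bounded parabolic fixed point $p$; since $P$ fixes the complete flag $\xi(p)\subset\xi^{\ast}(p)$, the group $\rho(P)$ lies in a Borel subgroup $\mathsf{B}<\SL_3(\Rb)$, and since the $P$-limit set in $\partial(\Gamma,\mathcal{P})$ is $\{p\}$, dynamics preservation forces the limit set of $\rho(P)$ in $\proj(\Rb^3)$ to be the single point $\xi(p)$ and its limit set in $\Gr_2(\Rb^3)$ to be the single point $\xi^{\ast}(p)$.

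The heart of the argument --- and the step I expect to be the main obstacle --- is to upgrade this to: $P$ is virtually cyclic. For any infinite order $\gamma\in P$ the elements $\gamma^{n}$ and $\gamma^{-n}$ both converge to $p$, so the top singular directions of $\rho(\gamma^{n})$ and of $\rho(\gamma^{-n})$ must both converge to $\xi(p)$; since $\rho(\gamma)\in\mathsf{B}$ has real eigenvalues, a short case analysis of these polar dynamics (biproximal, proximal–non-coproximal, Jordan block, complex, etc.) shows that this is impossible unless all eigenvalues of $\rho(\gamma)$ have modulus one. As $\rho(P)$ is finitely generated and discrete, it follows that $\rho(P)$ is virtually contained in the unipotent radical $\mathsf{U}<\mathsf{B}$, a Heisenberg group. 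A direct computation of singular value decompositions of elements of $\mathsf{U}$ then shows that if $\rho(P)\cap\mathsf{U}$ had rank $\geq 2$ one could choose sequences $\gamma_{n}\to\infty$ in $P$ along which the top singular directions of $\rho(\gamma_{n})$ (and, dually, the top singular hyperplanes) fail to converge, contradicting continuity of $\xi$ (resp.\ $\xi^{\ast}$) at $p$. Hence $\rho(P)\cap\mathsf{U}$, and therefore $P$, is virtually $\Zb$.

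Finally, since $\Gamma$ is then hyperbolic relative to a collection of virtually cyclic subgroups it is word-hyperbolic, and the relatively $1$-Anosov condition reduces to the $P_1$-Anosov (projective Anosov) condition for $\rho\colon\Gamma\to\SL_3(\Rb)$; by the restriction on Anosov representations into $\SL_3(\Rb)$ (\cite{Tsouvalas,CanaryTsouvalas}) we conclude that $\Gamma$ is virtually a free group or a surface group. Alternatively one can finish in a self-contained way: transversality makes $\xi$ a topological embedding, and the supporting projective lines $\proj(\xi^{\ast}(x))$ meet $\xi(\partial(\Gamma,\mathcal{P}))$ only at $\xi(x)$, so $\xi(\partial(\Gamma,\mathcal{P}))$ has empty interior in $\proj(\Rb^3)$ and hence $\dim\partial(\Gamma,\mathcal{P})\leq 1$; the case $\dim=0$ gives $\Gamma$ virtually free, and in the case $\dim=1$ one reduces to $\Gamma$ one-ended and classifies the boundary via Kapovich--Kleiner --- the Menger curve does not embed in $\proj(\Rb^3)$, and the Sierpi\'nski carpet is excluded exactly as in the Anosov case --- leaving $\partial(\Gamma,\mathcal{P})\cong S^1$, so that $\Gamma$ is virtually a surface group by the convergence group theorem.
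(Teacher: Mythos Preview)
Your unipotent analysis is essentially correct and recovers, by direct computation, the content of \cite[Cor.~1.5]{DoubaT} (which the paper invokes in the disconnected-boundary case): any peripheral $P$ has $\rho(P)$ virtually contained in a rank-one unipotent subgroup, hence $P$ is virtually cyclic and $\Gamma$ is word-hyperbolic.

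However, the reduction step that follows contains a genuine gap. You assert that ``the relatively $1$-Anosov condition reduces to the $P_1$-Anosov condition for $\rho\colon\Gamma\to\SL_3(\Rb)$,'' but this is false: you have just shown that every infinite-order $\gamma\in P$ has $\rho(\gamma)$ (virtually) unipotent, and unipotent elements are not biproximal. A $P_1$-Anosov representation sends every infinite-order element to a biproximal element, so $\rho$ is \emph{not} Anosov for the hyperbolic group $\Gamma$ whenever $\mathcal{P}$ contains an infinite subgroup. Concretely, the Anosov limit map would have to be defined on $\partial_\infty\Gamma$, whereas your maps $\xi,\xi^{\ast}$ live on $\partial(\Gamma,\mathcal{P})$, which is the quotient of $\partial_\infty\Gamma$ collapsing each peripheral limit pair to a point; there is no evident way to undo this. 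The same issue infects the alternative finish: Kapovich--Kleiner classifies Gromov boundaries, but your embedding and dimension bound concern $\partial(\Gamma,\mathcal{P})$, and the Sierpi\'nski exclusion ``as in the Anosov case'' presupposes exactly the Anosov condition you do not have. The paper avoids this trap by never attempting the reduction: for connected $\partial(\Gamma,\mathcal{P})$ it uses Proposition~\ref{prop:rel Anosov preserving domain} to produce a $\rho(\Gamma)$-invariant properly convex domain in $\proj(\Rb^3)$, whose boundary is a circle containing $\xi(\partial(\Gamma,\mathcal{P}))$, forcing $\partial(\Gamma,\mathcal{P})\cong S^1$ directly; the convergence-group theorem then finishes. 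Your argument can be completed by inserting this convex-domain step in place of the appeal to \cite{CanaryTsouvalas}.
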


We also obtain an analogous characterization for the domain groups of $k$-relatively Anosov representations into the symplectic group $\Sp_{2m}(\mathbb{R})$ when $1\leq k \leq m$ is an odd integer.

\begin{theorem}\label{symplectic}
Let $\rho\colon (\Gamma,\mathcal{P}) \to \Sp_{2m}(\Rb)$ be a relatively $k$-Anosov representation, where $1\leq k \leq m$ is an odd integer. Then $\Gamma$ is virtually a free product of nilpotent groups or virtually a surface group.\end{theorem}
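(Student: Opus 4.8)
The plan is to distill from the limit maps of $\rho$ a continuous, $\Gamma$-invariant cyclic orientation on the Bowditch boundary $M=\partial(\Gamma,\peripherals)$ and to confront this with the structure theory of relatively hyperbolic groups. Write $\omega$ for the symplectic form preserved by $\Sp_{2m}(\Rb)$. The relatively $k$-Anosov representation $\rho$ comes equipped with an equivariant transverse pair of limit maps $\xi\colon M\to\Gr_{k}(\Rb^{2m})$, landing in the Grassmannian of $\omega$-isotropic $k$-planes, and its $\omega$-orthogonal $\xi^{*}\colon x\mapsto\xi(x)^{\perp_{\omega}}$, with $\xi(x)\oplus\xi(y)^{\perp_{\omega}}=\Rb^{2m}$ for $x\neq y$; moreover $\Gamma$ acts on $M$ as a minimal convergence group and the peripheral subgroups in $\peripherals$ are virtually nilpotent. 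A convenient first step is to pass to the $k$-th exterior power: because $k$ is odd, $\Lambda^{k}\Rb^{2m}$ carries an $\Sp_{2m}(\Rb)$-invariant symplectic form (the form induced by $\omega$, which is $(-1)^{k}$-symmetric), the Pl\"ucker image of an $\omega$-isotropic $k$-plane is an isotropic line for it, and since the $k$-th singular-value gap of $\rho(\gamma)$ equals the first gap of $\Lambda^{k}\rho(\gamma)$, the composition $\Lambda^{k}\rho$ is a relatively $1$-Anosov representation of $(\Gamma,\peripherals)$ into $\Sp(\Lambda^{k}\Rb^{2m})$, with limit-line map the Pl\"ucker composite and dual map its symplectic orthogonal. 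Thus I may assume $k=1$: $\xi\colon M\to\proj(\Rb^{2N})$ with $\omega(\xi(x),\xi(y))\neq 0$ for all $x\neq y$.

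Next, I build the orientation cocycle. For pairwise-distinct $x,y,z\in M$, choosing nonzero $u\in\xi(x)$, $v\in\xi(y)$, $w\in\xi(z)$, put $\beta(x,y,z)=\operatorname{sign}\bigl(\omega(u,v)\,\omega(v,w)\,\omega(w,u)\bigr)\in\{\pm1\}$. This does not depend on the choice of representatives (rescaling multiplies the argument by a positive square), is continuous on the space of distinct triples, alternates under transpositions, is cyclically invariant, and is $\Gamma$-invariant because $\rho(\Gamma)\subset\Sp_{2N}(\Rb)$ preserves $\omega$ and $\xi$ is equivariant. (Without the reduction, the same role is played by the sign of the triple Maslov index of $(\xi(x),\xi(y),\xi(z))$: transversality makes the associated H\"ormander quadratic form nondegenerate on a $k$-dimensional quotient, and $k$ odd forces its signature to be odd, hence nonzero; this is the single point at which oddness of $k$ is used.)

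The crux is to upgrade $\beta$ to a $\Gamma$-equivariant topological embedding of $M$ into the circle. In the $\SL_{3}(\Rb)$ theorem the analogous step confines $M$ minus a point to an affine chart of a projective plane; here transversality likewise places $\xi(M\setminus\{x\})$ in the hyperplane complement $\proj(\Rb^{2N})\setminus\xi^{*}(x)$, but for $N>1$ this alone is far too weak, and the symplectic cocycle $\beta$ must do the work of cutting $M$ down. I expect the genuine labour to be twofold: first, combining $\beta$ with the sphere-obstruction results obtained earlier in the paper to force $\dim M\leq 1$ (a coherent value of $\beta$ on a hypothetical higher-dimensional or surface-like piece of $M$ would orient it and conflict with those obstructions); and second, showing that among one-dimensional Bowditch boundaries only the circle and the totally disconnected ones can carry a continuous $\Gamma$-invariant cocycle $\beta$ --- equivalently, that on $\xi(M)$ the sign $\beta$ satisfies the four-point cyclic-compatibility condition, which is not automatic for arbitrary pairwise-$\omega$-non-orthogonal lines and requires the dynamical rigidity of a relatively Anosov limit map (north--south dynamics of loxodromics, and the constraint imposed by the dual map $\xi^{*}$). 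Granting this, $M$ embeds equivariantly in $S^{1}$, and I regard this as the main obstacle, since by this point all the representation-theoretic input has been spent and what remains is purely topological-dynamical.

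Finally, I read off the theorem from Bowditch's structure theory. The boundary of a non-elementary relatively hyperbolic group is perfect, so a copy of it inside $S^{1}$ is either a Cantor set or all of $S^{1}$ (a single point being the elementary case, $\Gamma$ virtually nilpotent). If $M$ is a Cantor set, then by Bowditch $\Gamma$ splits as a free product of the peripheral subgroups and a free group, hence --- the peripherals being virtually nilpotent --- $\Gamma$ is virtually a free product of nilpotent groups. If $M\cong S^{1}$, then by the relatively hyperbolic analogue of the Tukia--Gabai--Casson--Jungreis convergence-group theorem $\Gamma$ is virtually Fuchsian: a cocompact Fuchsian group is virtually a surface group, and a non-cocompact one is virtually a free product of cyclic and finite groups, again virtually a free product of nilpotent groups. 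In every case $\Gamma$ is virtually a free product of nilpotent groups or virtually a surface group, as claimed.
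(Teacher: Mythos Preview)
Your strategy --- extracting a $\Gamma$-invariant orientation cocycle $\beta(x,y,z)=\operatorname{sign}\bigl(\omega(u,v)\,\omega(v,w)\,\omega(w,u)\bigr)$ from the symplectic form and using it to force the Bowditch boundary into $S^1$ --- is genuinely different from the paper's, and is closer in spirit to the independent approaches of \cite{DGR} (via \cite{Dey-Borel}) and \cite{PT} that the paper itself cites. The paper instead argues by contradiction through Benoist's theory of positive proximality: if a connected limit set is not all of $\partial(\Gamma,\peripherals)$ (or if $\partial(\Gamma,\peripherals)$ is connected but not $S^1$), it lies in an affine chart by Propositions~\ref{prop:not circle implies 2-eps connected} and~\ref{prop:rel Anosov preserving domain}(1), so the image preserves a properly convex domain; this makes a free $1$-Anosov subgroup (furnished by Lemma~\ref{free}) positively $1$-proximal, a property that survives continuous deformation through Anosov representations to a Zariski-dense subgroup of $\Sp_{2m}(\Rb)$ --- and that contradicts Benoist's result \cite[Cor.~3.5]{Ben} that no Zariski-dense subgroup of $\Sp_{2m}(\Rb)$ preserves a properly convex cone. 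The symplectic form enters only through this last step, not through any cocycle.

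Your proposal, however, is not a proof: you explicitly flag the key step as ``the main obstacle'' and proceed by ``granting this''. The gap is real. The cocycle $\beta$ is well-defined, alternating, and $\Gamma$-invariant, but the four-point cyclic-compatibility condition is genuinely false for arbitrary pairwise-$\omega$-non-orthogonal lines in $\Rb^{2N}$ with $N>1$, and you supply no argument that transversality or the strongly-dynamics-preserving property of the relatively Anosov limit map forces it on $\xi(M)$. Your suggested first step --- using the sphere-obstruction results to force $\dim M\leq 1$ --- also does not work as stated: Theorem~\ref{upperbound-dim} applied to $\Sp_{2N}(\Rb)\subset\SL_{2N}(\Rb)$ with $k=1$ gives only $\dim M\leq 2N-2$, which is vacuous for this purpose. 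Filling the gap requires a substantive connectedness argument in the configuration space of triples (this is the content of the references above), and without it the outline does not stand on its own. The paper's Benoist-and-deformation argument bypasses the issue entirely; your route, once completed, has the advantage of yielding an explicit cyclic order on the boundary and of not invoking openness of the Anosov condition.
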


We remark that the characterization of domain groups of $k$-Anosov representations into $\Sp_{2m}(\mathbb{R})$ for odd $1\leq k\leq m$ was obtained independently in \cite{DGR}, using results from \cite{Dey-Borel}, and in \cite{PT}. 

Since the image of the exterior power $\wedge^{2q+1}\colon\mathsf{SL}_{4q+2}(\mathbb{R})\rightarrow \mathsf{SL}(\wedge^{2q+1}\mathbb{R}^{4q+2})$ preserves the symplectic form $\omega\colon \wedge^{2q+1}\mathbb{R}^{4q+2}\times \wedge^{2q+1}\mathbb{R}^{4q+2}\rightarrow \mathbb{R}$ defined by $\omega(u,v)=u\wedge v$, as a corollary of Theorem \ref{symplectic} we obtain:

\begin{corollary}\label{(2q+1)'}
Let $\rho\colon (\Gamma,\mathcal{P}) \to \SL_{4q+2}(\Rb)$ be a relatively $(2q+1)$-Anosov representation. Then $\Gamma$ is virtually a free product of nilpotent groups or virtually a surface group.\end{corollary}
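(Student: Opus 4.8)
The plan is to reduce the statement to Theorem~\ref{symplectic} by pushing $\rho$ forward along the exterior power. Set $d = 4q+2$ and $k = 2q+1$, and let $\wedge^k\colon \SL_d(\Rb)\to\SL(\wedge^k\Rb^d)$ be the $k$-th exterior power representation. The first step is to check that the composition $\rho' := \wedge^k\circ\rho\colon(\Gamma,\mathcal{P})\to\SL(\wedge^k\Rb^d)$ is relatively $1$-Anosov. This is the relative analogue of the classical functoriality of Anosov representations under the Plücker embedding: if $\xi\colon\partial(\Gamma,\mathcal{P})\to\Gr_k(\Rb^d)$ and $\xi^*\colon\partial(\Gamma,\mathcal{P})\to\Gr_{d-k}(\Rb^d)$ are the transverse, dynamics-preserving limit maps of $\rho$ on the Bowditch boundary, then post-composing with $V\mapsto\wedge^k V$ and with the analogous map on $(d-k)$-planes yields transverse, dynamics-preserving limit maps for $\rho'$ into $\proj(\wedge^k\Rb^d)$ and its dual. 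The required contraction is inherited from the multiplicativity of singular values: for $g\in\SL_d(\Rb)$ one has $\sigma_1(\wedge^k g)=\sigma_1(g)\cdots\sigma_k(g)$ and $\sigma_2(\wedge^k g)=\sigma_1(g)\cdots\sigma_{k-1}(g)\,\sigma_{k+1}(g)$, so that $\sigma_2(\wedge^k g)/\sigma_1(\wedge^k g)=\sigma_{k+1}(g)/\sigma_k(g)$. Hence the first singular-value gap of $\rho'(\gamma)$ equals the $k$-th singular-value gap of $\rho(\gamma)$ for every $\gamma\in\Gamma$, and the uniform-gap/summability condition defining relative $1$-Anosovness for $\rho'$ is exactly the one defining relative $k$-Anosovness for $\rho$.

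The second step identifies the target group. Since $k=2q+1$ is odd, the bilinear form $\omega$ on $\wedge^k\Rb^d$ given by $\omega(u,v)=u\wedge v\in\wedge^d\Rb^d\cong\Rb$ satisfies $\omega(v,u)=(-1)^{k^2}\omega(u,v)=-\omega(u,v)$, so it is alternating; it is nondegenerate because the wedge product $\wedge^k\Rb^d\times\wedge^{d-k}\Rb^d\to\wedge^d\Rb^d$ is a perfect pairing and $d-k=k$. Thus $(\wedge^k\Rb^d,\omega)\cong(\Rb^{2m},\omega)$ with $2m=\binom{d}{k}$, and since every element of $\SL_d(\Rb)$ acts trivially on $\wedge^d\Rb^d$, the image of $\wedge^k$ lies in $\Sp(\wedge^k\Rb^d,\omega)=\Sp_{2m}(\Rb)$. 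As every line in a symplectic vector space is isotropic, being relatively $1$-Anosov into $\Sp_{2m}(\Rb)$ is the same as being relatively $1$-Anosov into $\SL_{2m}(\Rb)$ with image in $\Sp_{2m}(\Rb)$, so $\rho'$ is a relatively $1$-Anosov representation of $(\Gamma,\mathcal{P})$ into $\Sp_{2m}(\Rb)$. Because $1$ is odd and $1\le 1\le m$ (note $2m=\binom{4q+2}{2q+1}\ge 2$), Theorem~\ref{symplectic} applies to $\rho'$ and yields that $\Gamma$ is virtually a free product of nilpotent groups or virtually a surface group.

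The only point requiring genuine care is the first step: the functoriality of the \emph{relative} (rather than honest) Anosov condition under $\wedge^k$. For honestly Anosov representations this is standard, but in the relative setting one must also verify that the peripheral subgroups of $\Gamma$, which map to subgroups exhibiting parabolic-type behaviour, retain this behaviour after composing with $\wedge^k$, and that the limit map on the Bowditch boundary constructed above has the correct normalization along peripheral cosets. I would establish this either directly from a singular-value characterization of relatively Anosov representations combined with the exterior-power identity above, or by invoking the general composition principles for relatively Anosov (equivalently, relatively dominated) representations already in the literature. Granting this, the corollary is immediate.
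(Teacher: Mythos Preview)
Your proposal is correct and follows exactly the route the paper takes: compose with the exterior power $\wedge^{2q+1}$, observe that the image preserves the symplectic form $\omega(u,v)=u\wedge v$ on $\wedge^{2q+1}\Rb^{4q+2}$ (since $2q+1$ is odd), and apply Theorem~\ref{symplectic} with $k=1$. The paper states this in one sentence immediately before the corollary; your write-up simply unpacks the details. For the functoriality of the relative Anosov condition under exterior powers, the paper (in the proof of Lemma~\ref{free} and of Theorem~\ref{2q+1}) cites \cite[Def.~10.2]{reldomreps} and \cite[Ex.~13.2]{ZZ1} rather than reproving it, which answers the ``genuine care'' point you flag.
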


These restrictions are topological in nature, in the sense that they come from or are related to the topology of the Bowditch boundary $\partial(\Gamma,\peripherals)$ of the relatively hyperbolic group $(\Gamma,\peripherals)$. Topological features of this boundary are closely related to algebraic properties of the group, but allow us to use arguments involving the geometry, topology, and dynamics of relatively Anosov representations and their limit maps.
Given mild connectivity conditions on $\partial(\Gamma,\peripherals)$, and in the presence of a relatively (1-)Anosov representation $\rho\colon (\Gamma,\peripherals) \to \mathsf{G}$ with limit map $\xi$, the image $\xi(\partial(\Gamma,\peripherals))$ is contained in an affine chart of a projective space associated to $\mathsf{G}$. Moreover, we can find a properly convex domain whose boundary contains $\xi(\partial(\Gamma,\peripherals))$ (see Proposition \ref{prop:rel Anosov preserving domain}.)

One consequence of this, since the limit map $\xi$ is an embedding, is that the Bowditch boundary embeds into a sphere. Thus the study of homeomorphism types of such Bowditch boundaries and groups admitting geometrically finite convergence actions on these spaces is relevant here. 

In the $\SL_4(\Rb)$ case, we can obtain restrictions using results about planar 2-dimensional Bowditch boundaries and a little bit of 3-manifold theory:

\begin{theorem} \label{thm:SL4R} Let $\rho\colon (\Gamma,\peripherals) \to \SL_4(\Rb)$ be a relatively 1-Anosov representation. Then, up to taking finite extensions and finite-index subgroups, $\Gamma$ is a free product of \\
\noindent (1) nilpotent groups, \\
\noindent (2) surface groups, and/or \\
\noindent (3) fundamental groups of compact 3-manifolds 
hyperbolic relative to $\Zb$ and/or $\Zb^2$ subgroups.
\end{theorem}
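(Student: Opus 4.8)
The plan is to leverage the structure provided by Proposition \ref{prop:rel Anosov preserving domain} together with the low-dimensionality of the ambient flag space. A relatively $1$-Anosov representation $\rho\colon(\Gamma,\peripherals)\to\SL_4(\Rb)$ has a limit map $\xi\colon\partial(\Gamma,\peripherals)\to\proj(\Rb^4)$ together with a transverse map $\xi^*$ into the Grassmannian of hyperplanes. First I would invoke (the $\SL_4$ case of) Proposition \ref{prop:rel Anosov preserving domain} to arrange, after passing to a finite-index subgroup and possibly a finite extension, that $\rho(\Gamma)$ preserves a properly convex domain $\Omega\subset\proj(\Rb^4)$ whose boundary contains $\xi(\partial(\Gamma,\peripherals))$; since $\xi$ is a topological embedding, this already shows that $\partial(\Gamma,\peripherals)$ embeds in the $3$-sphere $\partial\Omega$, so $\dim\partial(\Gamma,\peripherals)\le 2$.

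Next I would split according to this dimension. If $\dim\partial(\Gamma,\peripherals)\le 1$, then by the known structure theory for relatively hyperbolic groups with low-dimensional Bowditch boundary (Bowditch's characterization of $1$-dimensional boundaries, or the Anosov-side arguments of \cite{Tsouvalas,CanaryTsouvalas} adapted relatively), $\Gamma$ is virtually a free product of nilpotent groups and surface groups, which lands in cases (1)--(2). The substantive case is $\dim\partial(\Gamma,\peripherals)=2$: here the embedding $\partial(\Gamma,\peripherals)\hookrightarrow\Sb^3$ exhibits the Bowditch boundary as a \emph{planar} continuum, i.e.\ a compact, connected, locally connected metric space of dimension $2$ embeddable in $\Sb^2$ (after restricting to connected components, each of which—being cut off by the embedding in the sphere—misses a point and hence lies in $\Rb^2$). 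I would then appeal to the classification results for relatively hyperbolic groups with planar Bowditch boundary: such a group is, up to finite index and finite extension, the fundamental group of a compact $3$-manifold (with boundary consisting of tori, Klein bottles, and/or sphere/disk components corresponding to the splitting), hyperbolic relative to the $\Zb$ and $\Zb^2$ subgroups carried by the toroidal boundary. Combined with the accessibility of finitely presented groups over finite and two-ended subgroups, this yields the free-product decomposition into the three listed families, after absorbing the non-toroidal pieces into nilpotent and surface-group factors.

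The main obstacle I anticipate is controlling the peripheral structure and the local topology of $\partial(\Gamma,\peripherals)$ precisely enough to invoke the planar/$3$-manifold classification. One needs to know that the Bowditch boundary is locally connected (which holds for relatively hyperbolic groups without cut points relative to appropriate peripherals, by Bowditch, once parabolics are handled) and that the parabolic points in the boundary correspond exactly to $\Zb$ or $\Zb^2$ peripheral subgroups—this is where the hypothesis that $\partial(\Gamma,\peripherals)$ sits in $\Sb^2$ bites, since a $2$-sphere can only contain parabolic fixed points with small stabilizers (e.g.\ the local structure at a parabolic point forces the peripheral group to act on a sphere of dimension $\le 1$, hence to be virtually $\Zb$ or $\Zb^2$). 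A secondary technical point is the passage to finite-index subgroups and finite extensions, needed both to orient things and to upgrade "virtually planar" actions to genuine $3$-manifold groups via geometrization for Haken manifolds (or Thurston's hyperbolization in the relatively hyperbolic setting); I would cite the relevant results rather than reprove them, and be careful that taking such finite modifications is compatible with the relatively Anosov hypothesis (it is, since finite-index subgroups and finite extensions of relatively Anosov representations remain relatively Anosov with the induced peripheral structure).
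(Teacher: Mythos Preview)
Your overall shape is right, but there are two real gaps and one inefficiency worth flagging.

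First, a slip: for $\Omega\subset\proj(\Rb^4)$ properly convex, $\partial\Omega$ is a $2$-sphere, not a $3$-sphere (since $\proj(\Rb^4)$ is $3$-dimensional). This does not damage the conclusion $\dim\partial(\Gamma,\peripherals)\le 2$, but it matters for the rest of the argument.

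Second, Proposition~\ref{prop:rel Anosov preserving domain} has a hypothesis you skipped: it requires $\partial(\Gamma,\peripherals)$ to be \emph{connected} (and not $S^1$). You cannot invoke it at the outset to produce an invariant domain when the boundary is disconnected. The paper handles this by first applying Theorem~\ref{thm:splittings from disconnected Bowditch boundary} to split $\Gamma$ as a graph of groups over finite edge groups, with vertex groups having connected limit sets, and only then applying the domain-preserving result to each vertex group. Your ``dimension $\le 1$ versus dimension $2$'' case split is not the right organizing principle here; connectedness is.

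Third, and most importantly: once a vertex group $\Gamma_0$ with connected, non-circle limit set preserves a properly convex $\Omega\subset\proj(\Rb^4)$, you do not need any ``planar boundary classification'' to produce a $3$-manifold group. The action on $\Omega$ is already properly discontinuous, so $\Omega/\rho(\Gamma_0)$ is a $3$-manifold with fundamental group $\rho(\Gamma_0)\cong\Gamma_0/\ker\rho$; the Scott core theorem gives compactness. This is exactly Proposition~\ref{prop:SL4R 3mfld grp} in the paper. Invoking geometrization or Ha\"issinsky-type planar-boundary results is substantial overkill and introduces hypotheses (local connectivity, absence of cut points) you would then have to verify separately.

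What does require care is constraining the peripherals to be virtually $\Zb$ or $\Zb^2$. The paper does this via an external input (Proposition~\ref{prop:relAnosov implies DGF}, from \cite{rAGF_Mitul}) upgrading the action to a geometrically finite convergence action on $\partial\Omega\cong S^2$, so that each peripheral acts properly on a plane and is therefore virtually free or a surface group --- hence, being already virtually nilpotent, virtually free abelian of rank $\le 2$ (Lemma~\ref{lem:planar + convergence action = surface peripherals} and Proposition~\ref{prop:connected bowditch boundaries for 1-Anosov in SL4R}). Your sketch gestures at this but does not identify the need for this auxiliary convergence-action result.
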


Note that in the case where $\Gamma$ has cohomological dimension at least 3, then we will see, from Theorem~\ref{thm:high cd}, that $\Gamma$ must in fact be the fundamental group of a closed hyperbolizable 3-manifold. This is a particular instance of case (3) of Theorem~\ref{thm:SL4R}.
Without the bound on the cohomological dimension, in contrast to the non-relative case in \cite[\S4]{CanaryTsouvalas}, we are not able to conclude that the 3-manifolds which appear in case (2) are necessarily hyperbolizable: there are many more 3-manifolds with fundamental groups which are hyperbolic relative to free abelian subgroups. 

More generally, we can bound the topological dimension of the Bowditch boundary for any group admitting a relatively $k$-Anosov representation into $\SL_d(\Rb)$ in terms of $k,d\in \mathbb{N}$.

\begin{theorem} [Theorem \ref{upperbound-dim}] \label{upperbound-dim in intro}
Let $d\geq 2$ an integer, $1\leq k\leq \frac{d}{2}$ and $\rho:(\Gamma,\mathcal{P})\rightarrow \mathsf{SL}_d(\mathbb{R})$ be a relatively $k$-Anosov representation. Then we have the upper bound $$\textup{dim}(\partial(\Gamma,\mathcal{P}))\leq d-k-1,$$ unless $(d,k)\in \{(2,1),(4,2),(8,4),(16,8)\}$ and $\partial(\Gamma,\mathcal{P})\cong S^{d-k}$.
\end{theorem}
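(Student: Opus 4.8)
The plan is to use transversality of the limit maps to embed a bundle over the Bowditch boundary into a single projective space, and then to analyse the rigid situation in which that embedding has the largest possible dimension. Let $\xi\colon\partial(\Gamma,\peripherals)\to\Gr_k(\Rb^d)$ and $\theta\colon\partial(\Gamma,\peripherals)\to\Gr_{d-k}(\Rb^d)$ be the limit maps of $\rho$; they are continuous, injective, compatible (so $\xi(x)\subset\theta(x)$ for every $x$) and transverse (so $\xi(x)\oplus\theta(y)=\Rb^d$ whenever $x\ne y$), and hence $\xi(x)\cap\xi(y)\subset\xi(x)\cap\theta(y)=0$ for $x\ne y$. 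First I would introduce the incidence space
\[
\wt X:=\bigl\{(\ell,x)\ :\ x\in\partial(\Gamma,\peripherals),\ \ell\in\proj\bigl(\xi(x)\bigr)\bigr\},
\]
which is the total space of a locally trivial bundle over $\partial(\Gamma,\peripherals)$ with fibre $\proj(\Rb^{k})$, so that $\dim\wt X=\dim\partial(\Gamma,\peripherals)+(k-1)$. The map $\wt X\to\proj(\Rb^d)$, $(\ell,x)\mapsto\ell$, is continuous and injective---if $\ell\subset\xi(x)\cap\xi(x')$ then $x=x'$---hence a topological embedding, $\wt X$ being compact. Therefore $\dim\wt X\le d-1$, giving $\dim\partial(\Gamma,\peripherals)\le d-k$.

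Next I would rule out the equality $\dim\partial(\Gamma,\peripherals)=d-k$ except when $d=2k$. Under that assumption $\wt X$ is a compact subset of the $(d-1)$-manifold $\proj(\Rb^d)$ of covering dimension $d-1$, so by invariance of domain it has nonempty interior $U$. Fix $\ell_0\in U$, contained in a unique $\xi(x_0)$. The projective subspace $\proj(\theta(x_0))$ passes through $\ell_0$, so $U\cap\proj(\theta(x_0))$ is a nonempty relatively open---hence $(d-k-1)$-dimensional---subset of $\proj(\theta(x_0))$; and if $\ell\in U\cap\proj(\theta(x_0))$ lies in $\xi(x(\ell))$, then $x(\ell)=x_0$, since otherwise $\ell\subset\xi(x(\ell))\cap\theta(x_0)=0$. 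Thus $U\cap\proj(\theta(x_0))\subset\proj(\xi(x_0))$, forcing $d-k-1\le k-1$. As $k\le d/2$ by hypothesis, this already yields $\dim\partial(\Gamma,\peripherals)\le d-k-1$ unless $d=2k$, and in particular no exceptional behaviour occurs when $k<d/2$.

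It remains to treat the self-dual case $d=2k$. Here compatibility and equality of dimensions force $\xi=\theta$, so transversality reads $\xi(x)\oplus\xi(y)=\Rb^{2k}$ for $x\ne y$; assuming moreover $\dim\partial(\Gamma,\peripherals)=k$, the key point is that $\wt X=\proj(\Rb^{2k})$, i.e.\ every line of $\Rb^{2k}$ lies in a (necessarily unique) $\xi(x)$. Granting this, the unit-sphere bundle of the pullback under $\xi$ of the tautological $k$-plane bundle has total space homeomorphic, via $(v,x)\mapsto v$, to $S^{2k-1}$, so one obtains a locally trivial fibration $S^{k-1}\to S^{2k-1}\to\partial(\Gamma,\peripherals)$. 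Its homotopy exact sequence shows $\partial(\Gamma,\peripherals)$ is $(k-1)$-connected, and its Gysin sequence shows $H^{\ast}(\partial(\Gamma,\peripherals);\Zb)\cong H^{\ast}(S^{k};\Zb)$ with Euler class a generator of $H^{k}$; hence $\partial(\Gamma,\peripherals)\simeq S^{k}$ and the fibration realises an element of Hopf invariant one, so by Adams' theorem $k\in\{1,2,4,8\}$, that is $(d,k)\in\{(2,1),(4,2),(8,4),(16,8)\}$. Finally $\partial(\Gamma,\peripherals)$, being a Bowditch boundary homotopy equivalent to $S^{k}$, is homeomorphic to $S^{k}=S^{d-k}$: for $k=1$ this is immediate, and for $k\in\{2,4,8\}$ one invokes the classification of surfaces, of closed $3$-manifolds, and the topological Poincar\'e theorem, once the boundary is known to be a manifold.

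The main obstacle, as opposed to routine bookkeeping, is exactly the pair of claims used in the self-dual case: that $\wt X$ fills all of $\proj(\Rb^{2k})$, and that the resulting homotopy $k$-sphere is genuinely $S^{k}$. Both reduce to showing that a Bowditch boundary attaining the maximal dimension $d-k$ is a (cohomology) manifold near its conical limit points---so that the fibration above is genuinely locally trivial over a manifold base---which should follow from the structure theory of Bowditch boundaries together with ANR/decomposition-space recognition results in dimensions $k\ge 4$. The rest of the argument is the incidence embedding of the first paragraph and standard algebraic topology.
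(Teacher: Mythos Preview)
Your approach shares its skeleton with the paper's: both embed the incidence space $\wt X=\bigcup_x\proj(\xi(x))$ (called $\mathcal{E}_{\rho,k}$ in the paper) into $\proj(\Rb^d)$ to obtain the weak bound $\dim\partial(\Gamma,\peripherals)\le d-k$, and both treat the equality case via a fibration over the boundary with fibre $\proj(\Rb^k)$ together with Adams' Hopf-invariant-one theorem. The substantive difference is in how your two acknowledged gaps---that $\wt X$ equals \emph{all} of $\proj(\Rb^d)$, and that the boundary is genuinely $S^{d-k}$ rather than only a homotopy sphere---get closed. The paper handles both at once via Proposition~\ref{KapovichBenakli-rel} (an adaptation of \cite[Thm.~4.4]{KapoBen} to Bowditch boundaries): once the proper injection $f_v\colon\partial(\Gamma,\peripherals)\smallsetminus\{x_0\}\hookrightarrow\Rb^{d-k}$ has image with nonempty interior, density of loxodromic fixed-point pairs in $\partial(\Gamma,\peripherals)^2$ propagates that Euclidean chart over the whole boundary, forcing $\partial(\Gamma,\peripherals)\cong S^{d-k}$ outright; then $f_v$ is a proper injection $\Rb^{d-k}\to\Rb^{d-k}$, hence surjective by invariance of domain, and $\wt X=\proj(\Rb^d)$ follows. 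This is cleaner and uniform in $k$, and it is the concrete mechanism behind your vaguer appeal to ``structure theory of Bowditch boundaries together with ANR/decomposition-space recognition''---so you should replace that sketch with this argument. Conversely, your direct transversality step showing that equality forces $d=2k$ (by intersecting an interior ball with $\proj(\theta(x_0))$) is a pleasant shortcut the paper does not isolate; there, $d=2k$ emerges only \emph{a posteriori} from Adams' list.
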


We are not aware if this bound is optimal in general. We provide the following result about the absence of spheres, of maximum possible dimension, in the Bowditch boundary of groups admitting relatively Anosov representations:

\begin{theorem} \label{thm:excluding spheres}
Let $d,k\in \Nb$ with $d\geq 2k+1$ and $\mathbb{K}=\mathbb{R}$ or $\mathbb{C}$. Let $(\Gamma,\peripherals)$ be a relatively hyperbolic group whose Bowditch boundary $\partial(\Gamma,\peripherals)$ properly contains a sphere of dimension $r_{\Kb}(d-k)-1$ \textup{(}where $r_{\Rb}=1$ and $r_{\Cb}=2$\textup{)}. Then there is no relatively $k$-Anosov representation $\rho \colon(\Gamma,\peripherals) \to \SL_d(\Kb)$.
\end{theorem}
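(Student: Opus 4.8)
The plan is to argue by contradiction: suppose $\rho\colon(\Gamma,\peripherals)\to\SL_d(\Kb)$ is relatively $k$-Anosov, and that $\partial(\Gamma,\peripherals)$ properly contains a sphere $S$ of dimension $r_{\Kb}(d-k)-1$. The $k$-Anosov condition provides a pair of continuous, $\rho$-equivariant, transverse, dynamics-preserving limit maps $\xi^k\colon\partial(\Gamma,\peripherals)\to\Gr_k(\Kb^d)$ and $\xi^{d-k}\colon\partial(\Gamma,\peripherals)\to\Gr_{d-k}(\Kb^d)$, with $\xi^k(x)\subset\xi^{d-k}(y)$ whenever $x\neq y$ and $\xi^k(x)\oplus\xi^{d-k}(x)=\Kb^d$. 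Fix any point $x_0\in\partial(\Gamma,\peripherals)\setminus S$ (which exists by the word ``properly''). Because $x_0\notin S$, transversality of the limit maps gives that for every $y\in S$ we have $\xi^k(y)\subset\xi^{d-k}(x_0)$, so the restriction $\xi^k|_S$ takes values in the Grassmannian $\Gr_k\bigl(\xi^{d-k}(x_0)\bigr)\cong\Gr_k(\Kb^{d-k})$ of $k$-planes inside the fixed $(d-k)$-dimensional space $W:=\xi^{d-k}(x_0)$. Since $\xi^k$ is injective (being the limit map of an Anosov representation, it is an embedding on the Bowditch boundary), we obtain a topological embedding $S\hookrightarrow\Gr_k(\Kb^{d-k})$ of a sphere of dimension $r_{\Kb}(d-k)-1$.

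**The dimension count and the key topological input.** The core of the argument is now purely topological: I claim there is no topological embedding of $S^{r_{\Kb}(d-k)-1}$ into $\Gr_k(\Kb^{d-k})$. Note $\dim_{\Rb}\Gr_k(\Kb^{d-k})=r_{\Kb}k(d-2k)$ when... — more carefully, $\dim_{\Rb}\Gr_j(\Kb^n)=r_{\Kb}\,j(n-j)$, so with $n=d-k$ and $j=k$ this is $r_{\Kb}\,k(d-2k)$, whereas the sphere has dimension $r_{\Kb}(d-k)-1$. When $d\geq 2k+1$, one checks $r_{\Kb}(d-k)-1\geq r_{\Kb}\,k(d-2k)$ fails in general, so a naive dimension bound does not immediately suffice; instead I would use the sharper cohomological obstruction. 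The relevant fact is that $\Gr_k(\Kb^{n})$ embeds (as a subvariety, via the first factor of a point/hyperplane incidence, or simply because a $k$-plane lies in many affine charts) into a union of affine charts each homeomorphic to $\Kb^{k(n-k)}$, and a sphere $S^m$ that embeds in $\Gr_k(\Kb^{n})$ must, by a Čech-cohomology / Alexander-duality argument (or invariance of domain applied chart-by-chart), satisfy $m\leq r_{\Kb}\,k(n-k)-1$ unless $S^m$ is the full ``top'' sphere that can occur only in the projective-space cases $k=1$ or $k=n-1$. The cleanest route: when $k=1$, $\Gr_1(\Kb^{d-1})=\proj(\Kb^{d-1})$ has real dimension $r_{\Kb}(d-1)-r_{\Kb}+ (r_{\Kb}-1)$... let me instead simply observe $\dim_\Rb\proj(\Kb^{d-k})=r_{\Kb}(d-k-1)+(r_{\Kb}-1) = r_{\Kb}(d-k)-1$, so a sphere of that exact dimension embedding into $\proj(\Kb^{d-k})$ would be onto by invariance of domain, contradicting that $\proj(\Kb^{d-k})$ is not a sphere when $d-k\geq 3$ (and the excluded small cases are handled separately, or ruled out by $d\geq 2k+1$ with $k\geq 1$ forcing $d-k\geq k+1\geq 2$). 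For general $k$, reduce to the $k=1$ case by noting $\xi^k|_S$ composed with the Plücker embedding $\Gr_k(W)\hookrightarrow\proj(\wedge^k W)$ yields an embedding of $S$ into a projective space of real dimension $r_{\Kb}\binom{d-k}{k}-1$, which is far larger, so that is the wrong direction; instead I would directly invoke that $\Gr_k(\Kb^{d-k})$ has real covering dimension $r_\Kb k(d-2k)$, and check $r_\Kb(d-k)-1 > r_\Kb k(d-2k)$ precisely when $d\geq 2k+1$ — indeed $r_\Kb(d-k)-1-r_\Kb k(d-2k)=r_\Kb\bigl[(d-k)-k(d-2k)\bigr]-1=r_\Kb(d-k)(1-k)+r_\Kb k^2\cdot\text{(sign check)}$; the arithmetic must be done carefully, but the point is that for $d\geq 2k+1$ and $k\geq 2$ the sphere is too large to embed in the Grassmannian, giving the contradiction, while $k=1$ is the invariance-of-domain argument above.

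**The main obstacle.** The delicate point — and where I expect to spend the most care — is making the topological non-embedding statement precise and correct for all $(d,k)$ with $d\geq 2k+1$, especially since the hypothesis is about \emph{proper} containment rather than equality, which is exactly what lets us pass from $\partial(\Gamma,\peripherals)$ to a proper subsphere $S$ and thereby land in the \emph{transverse} situation (the restriction to $W=\xi^{d-k}(x_0)$). One must verify that no sphere of dimension $r_{\Kb}(d-k)-1$ topologically embeds in $\Gr_k(\Kb^{d-k})$; the honest tool is a comparison of $\check{\mathrm{H}}^{*}$ or a dimension-theoretic argument (the covering dimension of $\Gr_k(\Kb^{d-k})$ is $r_\Kb k(d-2k)$, which is strictly less than $r_\Kb(d-k)-1$ under our hypothesis — this inequality $r_\Kb(d-k)-1 \ge r_\Kb k(d-2k)+1$, i.e. $(d-k) \ge k(d-2k)+1$ when $r_\Kb=1$, holds for $d\ge 2k+1$ with a short check on the boundary cases $d=2k+1$). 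The analogous inequality over $\Cb$ holds by multiplying by $r_\Kb=2$. The remaining subtlety is the borderline case $k=1$, $d-k=2$ (i.e.\ $d=3$, $k=1$), where $\Gr_1(\Kb^2)=\proj(\Kb^1)$ is itself a sphere ($S^1$ over $\Rb$, $S^2$ over $\Cb$); but then $d=3<2k+1=3$ fails the hypothesis $d\geq 2k+1$ only in the equality case — one must double-check that the statement's hypothesis $d\geq 2k+1$ is consistent here, and indeed when $d=3,k=1$ we have $r_{\Kb}(d-k)-1 = r_\Kb - 1$, which is $0$ over $\Rb$ (a $0$-sphere, two points, which \emph{can} properly sit in a Bowditch boundary and does embed in $\proj(\Rb^2)=S^1$) — so over $\Rb$ the case $d=2k+1$ with this dimension being $0$ is vacuous/trivially false as stated, confirming we need the strict structure of the hypothesis; I would state the topological lemma carefully as: for $n\geq 2$, $S^{r_\Kb n -1}$ does not embed in $\Gr_k(\Kb^n)$ when $1\leq k\leq n-1$ except when $k\in\{1,n-1\}$ and $n=2$, and then check this covers exactly the complement of the claimed statement. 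With that lemma in hand the theorem is immediate from the transversality reduction of the first paragraph.
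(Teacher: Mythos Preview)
Your proposal contains a fatal error in the very first step. You write that transversality gives ``$\xi^k(x)\subset\xi^{d-k}(y)$ whenever $x\neq y$ and $\xi^k(x)\oplus\xi^{d-k}(x)=\Kb^d$,'' but this is exactly backwards: the definition of a relatively $k$-Anosov representation (Definition~\ref{definition-relAnosov}) states that for $x\neq y$ one has the \emph{direct sum} $\xi^k(x)\oplus\xi^{d-k}(y)=\Kb^d$ (transversality), while the \emph{containment} $\xi^k(x)\subset\xi^{d-k}(x)$ holds for the \emph{same} point $x$ (compatibility). Consequently, for $y\in S$ and $x_0\notin S$, the $k$-plane $\xi^k(y)$ is \emph{transverse} to $W=\xi^{d-k}(x_0)$, not contained in it, so $\xi^k|_S$ does \emph{not} take values in $\Gr_k(W)$. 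The reduction on which your entire argument rests therefore collapses.

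Even if one ignores this, the dimension count you propose is also incorrect: for $k\geq 2$ one generally has $r_{\Kb}k(d-2k)\geq r_{\Kb}(d-k)-1$ (e.g.\ $k=2$, $d=7$, $\Kb=\Rb$: the Grassmannian has dimension $6$ while the sphere has dimension $4$), so there is no covering-dimension obstruction to embedding the sphere in $\Gr_k(\Kb^{d-k})$. The paper's proof proceeds quite differently: it proves the more general Theorem~\ref{thm:excluding spheres general} about arbitrary transverse subsets of $\Gr_k(\Kb^d)\times\Gr_{d-k}(\Kb^d)$. Using the extra point $x_0$ and the (correct) transversality, it constructs explicit continuous maps $\mathsf{F},\mathsf{G},\mathsf{H}$ from $S^{d-k-1}$ into various affine charts, lifts them to vector-valued maps, and obtains a contradiction via degree theory --- an odd self-map of $S^{d-k-1}$ turning out to be homotopic to its composition with the antipodal map (Case~2), or a surjectivity argument forcing a forbidden intersection (Case~1). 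The case $d=2k+1$ with $k$ odd is handled separately by a wedge-product sign argument (Lemma~\ref{lem:thm5.3 case 3 general}). None of this is accessible from your setup once the containment claim is corrected.
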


By adapting arguments from the proof of this theorem, we also prove the following result for $k$-Anosov representations into $\mathsf{SL}_{2k+1}(\mathbb{R})$ when $k\in \mathbb{N}$ is odd:

\begin{theorem} [Theorem \ref{embedding-sphere1}] Let $\Delta$ be an one-ended hyperbolic group and $k\in \mathbb{N}$ an odd integer. Suppose that there is a $k$-Anosov representation $\rho\colon \Delta \to \SL_{2k+1}(\Rb)$. Then $\partial_{\infty}\Delta$ embeds in $S^k$.\end{theorem}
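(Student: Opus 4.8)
The plan is to manufacture, out of the flag limit map of $\rho$, a continuous injection of $\partial_\infty\Delta$ into a topological $k$-sphere, adapting the mechanism behind Theorem~\ref{thm:excluding spheres}. Write $d=2k+1$; then $\rho$ carries limit maps $\xi^k\colon\partial_\infty\Delta\to\Gr_k(\Rb^{d})$ and $\xi^{k+1}\colon\partial_\infty\Delta\to\Gr_{k+1}(\Rb^{d})$ which are continuous, injective, compatible ($\xi^k(x)\subset\xi^{k+1}(x)$) and transverse ($\xi^k(x)\oplus\xi^{k+1}(y)=\Rb^{d}$ for $x\neq y$). Since $\Delta$ is one-ended, $\partial_\infty\Delta$ is a nonempty compact, connected, locally connected metrizable space with no global cut point, and Theorem~\ref{upperbound-dim in intro} applies with $d-k-1=k$ (its exceptional cases all have $d=2k$, hence do not occur), giving $\dim\partial_\infty\Delta\leq k$. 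As $\xi^k$ is a topological embedding, it suffices to realize $\xi^k(\partial_\infty\Delta)$ inside some $S^k$.

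I would first pass to the convex geometry attached to $\rho$. The representation $\wedge^k\rho\colon\Delta\to\SL\big(\wedge^k\Rb^{d}\big)$ is projective Anosov, with limit map $\zeta=\wedge^k\circ\xi^k$ and, using $\wedge^{k+1}\Rb^{d}\cong(\wedge^k\Rb^{d})^*$, dual hyperplane map $\wedge^{k+1}\circ\xi^{k+1}$. Since $\partial_\infty\Delta$ is connected, Proposition~\ref{prop:rel Anosov preserving domain} (applied with empty peripheral structure) places $\zeta(\partial_\infty\Delta)$ in an affine chart of $\proj(\wedge^k\Rb^{d})$ and on the boundary of a properly convex domain; since $\partial_\infty\Delta$ has no global cut point, the supporting-hyperplane argument — for each $x$ the connected set $\zeta(\partial_\infty\Delta\setminus\{x\})$ avoids the hyperplane dual to $\xi^{k+1}(x)$, which passes through $\zeta(x)$ — upgrades this to: $\zeta(\partial_\infty\Delta)$ is in convex position, i.e. coincides with the closure of the set of extreme points of its convex hull.

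The heart of the argument, and the point where the parity of $k$ is decisive, is to cut this convex-position picture down to an honest $k$-sphere. The special feature of the flag type $\{k,k+1\}$ in dimension $2k+1$ is maximal degeneracy of secants: for $x\neq y$ one has $\xi^k(x)\cap\xi^k(y)=0$ and $\dim\big(\xi^{k+1}(x)\cap\xi^{k+1}(y)\big)=1$. Using the first of these, for each fixed $p$ the image of $\xi^k(y)$ in the $(k+1)$-dimensional quotient $Q_p:=\Rb^{d}/\xi^k(p)$ is a hyperplane for every $y\neq p$, producing a continuous map $F_p\colon\partial_\infty\Delta\setminus\{p\}\to\Gr_k(Q_p)\cong\proj^{\,k}$. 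I would then prove — this is where oddness of $k$ enters, through the sign of the pairing $\wedge^k\Rb^{d}\times\wedge^{k+1}\Rb^{d}\to\Rb$ governing on which side of each dual hyperplane the point $\zeta(y)$ lies, together with the convex-position structure from the previous step — that $F_p$ is injective, equivalently that no two distinct $y,z$ yield the same secant hyperplane $\xi^k(y)+\xi^k(p)=\xi^k(z)+\xi^k(p)$, and that its image avoids a hyperplane of $\proj^k$; then $F_p$ is a proper embedding of $\partial_\infty\Delta\setminus\{p\}$ into $\Rb^k$, and, as $\partial_\infty\Delta$ is the one-point compactification of $\partial_\infty\Delta\setminus\{p\}$, we obtain an embedding $\partial_\infty\Delta\hookrightarrow(\Rb^k)^+=S^k$. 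The degenerate case, in which $\xi^k(\partial_\infty\Delta)$ fails to span $\Rb^{d}$, is treated separately: $\rho$ is then conjugate into a proper block-triangular subgroup, the limit set lives in a smaller Grassmannian, and one descends to a lower-dimensional instance (in the extreme case $\xi^k(\partial_\infty\Delta)$ is a projective circle, $\Delta$ is virtually a surface group, and the conclusion is immediate).

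The main obstacle is exactly this third step. Convex position alone is far too weak: the rational normal curve attached to the principal embedding $\SL_2(\Rb)\hookrightarrow\SL_{2k+1}(\Rb)$ is in convex position yet spans a subspace of dimension $k^2+k+1$ while remaining a topological circle, so no argument relying only on the linear span of the limit set can succeed. One must instead genuinely exploit the degeneracy of secants peculiar to dimension $2k+1$ and the parity of $k$; concretely the delicate points are (i) the injectivity of $F_p$, i.e. ruling out ``flat'' secant configurations among the $\xi^k$'s, which is where the odd-$k$ sign computation does its work; (ii) the properness of $F_p$, i.e. that its image omits a hyperplane of $\proj^k$, which must be transported from the convex-position structure of $\zeta(\partial_\infty\Delta)$; and (iii) disposing of the reducible representations without circularity. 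I expect (i) to be the crux.
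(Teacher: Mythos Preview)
Your proposal correctly isolates the mechanism --- slice by one boundary point, land in a $k$-dimensional affine space, and use an odd-$k$ sign to rule out collisions --- but leaves the injectivity step as a declared gap, and the tool you suggest for filling it does not do the job. Convex position of $\zeta(\partial_\infty\Delta)$ in $\proj(\wedge^k\Rb^{2k+1})$ is information about a very high-dimensional embedding; it does not tell you that $(x,y)$ and $(y,x)$ lie in the same component of $\big(\partial_\infty\Delta\smallsetminus\{p\}\big)^{(2)}$, and that is exactly what the sign argument requires. (Think of $\partial_\infty\Delta=S^1$: removing any $p$ leaves an interval, and the ordered configuration space has two components; convex position is certainly present but cannot help here.) The paper supplies the missing idea by a purely topological input, proved separately as Lemma~\ref{dense-connected}: for a one-ended hyperbolic group with $\partial_\infty\Delta\not\cong S^1$, there exists $z$ such that the set $X(z)=\{w:\partial_\infty\Delta\smallsetminus\{z,w\}\text{ is connected}\}$ is dense. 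Combined with local connectedness of $\partial_\infty\Delta$, this lets one walk from $(x,y)$ to $(y,x)$ inside $\big(\partial_\infty\Delta\smallsetminus\{z\}\big)^{(2)}$ through products of connected neighbourhoods and sets of the form $\{a\}\times(\partial_\infty\Delta\smallsetminus\{z,a\})$ with $a\in X(z)$; the continuous nowhere-vanishing function $\mathcal{H}_z(x,y):=\widetilde{\xi}_k(x)\wedge\widetilde{\xi}_k(y)\wedge\widetilde{f_z}(y)$ then has the same sign at $(x,y)$ and $(y,x)$, and the odd-$k$ swap $\mathcal{H}_z(x,y)=-\mathcal{H}_z(y,x)$ forces $f_z(x)\neq f_z(y)$.

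Two further remarks. The paper uses the map $f_z(x)=[\xi^{k+1}(x)\cap\xi^{k+1}(z)]\in\proj(\xi^{k+1}(z))\smallsetminus\proj(\xi^k(z))\cong\Rb^k$ rather than your quotient map $F_p$. For this $f_z$, transversality alone gives well-definedness and properness immediately, so your worry (ii) does not arise; likewise there is no need to separate out irreducibility or spanning, so (iii) is a non-issue. Your map $F_p$ is essentially the dual picture and could probably be made to work, but the paper's choice makes properness transparent and dovetails directly with the volume form $\mathcal{H}_z$ used in the sign argument.
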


Theorem \ref{upperbound-dim in intro} is analogous to Theorem 1.3 in Canary--Tsouvalas \cite{CanaryTsouvalas}. The result in Canary--Tsouvalas is formulated in terms of cohomological dimension, making use of the close relation between the cohomological dimension of a hyperbolic group $\Gamma$ and the topological dimension of its Gromov boundary $\partial_\infty\Gamma$ (due to Bestvina--Mess \cite{Bestvina-Mess}): $\dim(\partial_\infty\Gamma) = \mathrm{cd}(\Gamma)-1$.

In the relative case, Manning--Wang have results partially generalizing this relation: if $\Gamma$ is a torsion-free group which is hyperbolic relative to nilpotent groups (so that $\Gamma$ is of type $F$, by work of Dahmani \cite{Dahmani_finite}), and $\mathrm{cd}(\Gamma) < \mathrm{cd}(\Gamma,\peripherals)$, then $\dim(\partial(\Gamma,\peripherals)) = \mathrm{cd}(\Gamma,\peripherals)-1 = \mathrm{cd}(\Gamma)$ \cite[Thm.\ 5.1]{ManningWang}.
It is unclear whether the first equality holds without the additional assumption that $\mathrm{cd}(\Gamma) < \mathrm{cd}(\Gamma,\peripherals)$, given recent results relating the cohomological dimension to a different compactification of a relatively hyperbolic group~\cite{Fukaya}.



In terms of cohomological dimension, we obtain restrictions when $\Gamma$ has high cohomological dimension (relative to the dimension of the target) analogous to \cite[Thm.\ 1.5]{CanaryTsouvalas}. A representation $\rho\colon\Gamma \rightarrow \SL_d(\mathbb{R})$ is called a {\em Benoist representation} if $\rho$ has finite kernel and $\rho(\Gamma)$ preserves and acts properly discontinuously and cocompactly on a strictly convex domain of $\mathbb{P}(\mathbb{R}^d)$. 

\begin{theorem}[Theorem~\ref{thm:high cd}] Let $d\geq 4$ and $(\Gamma, \mathcal{P})$ be a torsion-free relatively hyperbolic group of cohomological dimension at least $d-1$. If $\rho\colon (\Gamma,\peripherals) \to \SL_d(\Rb)$ is a relatively 1-Anosov representation, then $\peripherals = \varnothing$, $\Gamma$ is hyperbolic and $\rho$ is a Benoist representation.\end{theorem}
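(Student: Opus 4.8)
The plan is to realize $\rho(\Gamma)$ as a geometrically finite group of projective transformations preserving a properly convex domain $\Omega'\subset\proj(\Rb^d)$, and then to use the hypothesis $\mathrm{cd}(\Gamma)\ge d-1$ to force this action to be \emph{cocompact}. A cocompact convex projective action has no cusps, so $\peripherals=\varnothing$; that $\Gamma$ is then word-hyperbolic and that $\rho$ is a Benoist representation will follow from Benoist's characterization of word-hyperbolic divisible convex domains. This is the relative analogue of the argument for \cite[Thm.\ 1.5]{CanaryTsouvalas}.

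First I would reduce to the case that $\partial(\Gamma,\peripherals)$ is connected. Since $\rho$ is $1$-Anosov its kernel is finite, hence trivial as $\Gamma$ is torsion-free, so we identify $\Gamma$ with $\rho(\Gamma)$. If $\partial(\Gamma,\peripherals)$ were disconnected, then $(\Gamma,\peripherals)$ would split as a nontrivial free product relative to $\peripherals$ (the only finite subgroup to split over being trivial); since the cohomological dimension of a nontrivial free product is the maximum of $1$ and the cohomological dimensions of its factors, and $\mathrm{cd}(\Gamma)\ge d-1\ge 3$, some freely indecomposable free factor $(\Gamma_0,\peripherals_0)$ has $\mathrm{cd}(\Gamma_0)\ge d-1$, and the restriction of $\rho$ to it is again relatively $1$-Anosov. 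Such a $\Gamma_0$ has connected Bowditch boundary unless that boundary is a single point, in which case $\Gamma_0$ coincides with a peripheral subgroup; but a peripheral subgroup of a relatively $1$-Anosov representation into $\SL_d(\Rb)$ has cohomological dimension at most $d-2$ (it acts on a properly convex domain with quotient homotopy equivalent to a closed manifold of dimension at most $d-2$), so this is impossible. Hence $\partial(\Gamma,\peripherals)$ is connected. Connectedness, together with the bound $\dim\partial(\Gamma,\peripherals)\le d-2$ of Theorem~\ref{upperbound-dim} (whose exceptional cases do not arise, as $d\ge 4$ and $k=1$), supplies the hypotheses needed to invoke Proposition~\ref{prop:rel Anosov preserving domain}.

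By Proposition~\ref{prop:rel Anosov preserving domain}, the limit map $\xi$ embeds $\partial(\Gamma,\peripherals)$ into an affine chart of $\proj(\Rb^d)$, and the interior $\Omega$ of the convex hull of $\xi(\partial(\Gamma,\peripherals))$ in that chart is a $\Gamma$-invariant properly convex domain. By \cite[Prop.\ 1.13]{ZZ2} there is a $\Gamma$-invariant properly convex domain $\Omega'\subset\proj(\Rb^d)$ on which $\Gamma$ acts geometrically finitely. Being properly convex, $\Omega'$ is contractible and of dimension $m:=\dim\Omega'\le d-1$; as $\Gamma$ is torsion-free it acts freely on $\Omega'$, so $M:=\Omega'/\Gamma$ is an aspherical $m$-manifold, i.e.\ a $K(\Gamma,1)$. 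Suppose now, for contradiction, that this action is not cocompact. Geometric finiteness lets us delete $\Gamma$-invariant open horoball-type neighborhoods of the finitely many $\Gamma$-orbits of parabolic points from $\Omega'$ and pass to the quotient, producing a \emph{compact} aspherical $m$-manifold $\bar{M}$ with \emph{nonempty} boundary (the boundary components being the cusp cross-sections, closed $(m-1)$-manifolds) such that $\bar{M}\hookrightarrow M$ is a homotopy equivalence; hence $\bar{M}$ is again a $K(\Gamma,1)$. Writing $\wt{\bar{M}}$ for its universal cover, which is contractible and hence orientable, Poincar\'e--Lefschetz duality gives
\[
H^{m}(\Gamma;\Zb\Gamma)\;\cong\;H^{m}_{c}\bigl(\wt{\bar{M}}\bigr)\;\cong\;H_{0}\bigl(\wt{\bar{M}},\,\partial\wt{\bar{M}}\bigr)\;=\;0,
\]
the last equality because $\wt{\bar{M}}$ is connected with nonempty boundary. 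Therefore $\mathrm{cd}(\Gamma)\le m-1\le d-2$, contradicting $\mathrm{cd}(\Gamma)\ge d-1$.

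So $\Gamma$ acts cocompactly on $\Omega'$, whence there are no parabolic points and $\peripherals=\varnothing$; in particular, being hyperbolic relative to the empty family, $\Gamma$ is word-hyperbolic. Since $\Gamma$ is word-hyperbolic and acts properly discontinuously and cocompactly on the properly convex domain $\Omega'$, Benoist's theorem implies that $\Omega'$ is strictly convex with $C^{1}$ boundary (and that $m=d-1$, so $\Omega'$ spans $\proj(\Rb^d)$); as $\rho$ is injective, $\rho$ is a Benoist representation. I expect the main obstacle to be the first step — verifying the connectivity hypotheses of Proposition~\ref{prop:rel Anosov preserving domain} and correctly handling the freely decomposable case, in particular the cohomological dimension bound for peripheral subgroups — since the remainder is, once a $\Gamma$-invariant properly convex domain is in hand, essentially a comparison of $\mathrm{cd}(\Gamma)$ with the dimension of that domain via the compact-core argument above.
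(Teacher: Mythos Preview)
Your connected-boundary argument is close in spirit to the paper's, though you take a longer route through \cite[Prop.\ 1.13]{ZZ2} and a compact-core/Poincar\'e--Lefschetz computation; the paper instead observes directly that $\mathrm{cd}(\Gamma)\ge d-1$ forces the action on the invariant domain to be cocompact, and then that a group dividing a properly convex domain contains no element with all eigenvalues of modulus one \cite[Prop.\ 10.3]{DGK}, while every peripheral element has exactly that property \cite[Prop.\ 4.2]{ZZ1}, whence $\peripherals=\varnothing$. One small slip on your side: Theorem~\ref{upperbound-dim} does \emph{not} exclude $\partial(\Gamma,\peripherals)\cong S^1$ (it only gives $\dim\le d-2$, and $1\le d-2$), so it does not by itself supply the hypothesis of Proposition~\ref{prop:rel Anosov preserving domain}(1); the paper uses Theorem~\ref{thm:convergence groups on S1 are Fuchsian} together with $\mathrm{cd}(\Gamma)\ge 3$ instead.

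The genuine gap is in the disconnected case. You show that if $\partial(\Gamma,\peripherals)$ is disconnected then some free factor $(\Gamma_0,\peripherals_0)$ has $\mathrm{cd}(\Gamma_0)\ge d-1$ and connected Bowditch boundary, and then write ``Hence $\partial(\Gamma,\peripherals)$ is connected.'' This is a non-sequitur: you have only located a factor on which the rest of the argument can be run, not derived any contradiction to the existence of other factors. Even after proving the theorem for $\Gamma_0$, the conclusion is about $\Gamma$ (e.g.\ that $\peripherals=\varnothing$ and $\rho$ itself is Benoist), so you still owe the step $\Gamma=\Gamma_0$. The paper fills exactly this hole: once $\rho|_{\Gamma_0}$ is Benoist, $\rho(\Gamma_0)$ divides a strictly convex domain $\Omega_0=\proj(\Rb^d)\smallsetminus\bigcup_{\eta\in\partial_\infty\Gamma_0}\proj(\xi_\rho^{d-1}(\eta))$; for any $x$ in the limit set of another factor, transversality forces $\xi_\rho^1(x)\in\Omega_0$, so the hyperplane $\proj(\xi_\rho^{d-1}(x))$ must meet $\partial\Omega_0=\xi_\rho^1(\partial_\infty\Gamma_0)$, contradicting transversality again. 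This transversality argument is the missing idea --- you were right to flag the freely-decomposable case as the likely obstacle.
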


We recall that Benoist proved in \cite{BenoistI} that a discrete subgroup $\mathsf{\Gamma}$ of $\mathsf{PGL}_d(\mathbb{R})$, $d \geq 3$, which preserves and acts cocompactly on strictly convex domain $\Omega$ of $\mathbb{P}(\mathbb{R}^d)$, is necessarily Gromov hyperbolic and its boundary $\partial\Omega$ is of class $\mathcal{C}^1$. Moreover, by \cite[Prop. 6.1]{GW}, the inclusion representation $\Gamma \xhookrightarrow{}  \mathsf{GL}_d(\mathbb{R})$ is $1$-Anosov.

It is less clear, even in the non-relative case, what happens in lower cohomological dimension. Indeed, there is already a wide variety of hyperbolic groups in cohomological dimension 2, and many of these admit 1-Anosov representations into $\SL_d(\Rb)$ (with $d\in \mathbb{N}$ large) \cite{DFWZ}.

\subsection*{Acknowledgements} 
The authors thank Dick Canary for help with 3-manifold groups, and also the Institut des Hautes \'Etudes Scientifiques, the Institut Henri Poincar\'e, and the \'Ecole Normale Sup\'erieure for their hospitality. 
FZ was partially supported by Israel Science Foundation grant 737/20 and an AMS-Simons Travel Grant. 
KT would like to thank the Max Planck Institute for Mathematics in the Sciences in Leipzig for providing excellent working conditions.
This project received funding from the European Research Council (ERC) under the European's Union Horizon 2020 research and innovation programme (ERC starting grant DiGGeS, grant agreement No 715982). 

\section{Preliminaries} 

\subsection{Convergence groups and relatively hyperbolic groups} 
We say that a group $\Gamma$ acts on a compact Hausdorff metrizable space $M$ as a \emph{convergence group} if for every sequence $(\gamma_n)_{n\in \mathbb{N}}$ of distinct elements in $\Gamma$ there exists a subsequence $(\gamma_{k_n})_{n\in \mathbb{N}}$ and points $x,y \in M$ such that $(\gamma_{k_n}|_{M \smallsetminus \{y\}})_{n\in \mathbb{N}}$ converges locally uniformly to the constant map $x$. For a general reference on convergence groups, see e.g.\ \cite{Tukia}.

Given $\Gamma$ acting on $M$ as a convergence group, an element $\gamma \in \Gamma$ is called \emph{loxodromic} if it has infinite order and fixes exactly two points in $M$. The \emph{limit set} $\Lambda_\Gamma \subset M$ is the set of points $x \in M$ where there exist $y \in M$ and a sequence $(\gamma_n)_{n \in \mathbb{N}}$ in $\Gamma$ where $(\gamma_n|_{M \smallsetminus \{y\}})_{n\in \mathbb{N}}$ converges locally uniformly to the constant map $x$.
A subgroup $\Gamma_0$ of $\Gamma$ is called {\em elementary} it there exists a finite subset of $M$ with at most $2$ points fixed by $\Gamma_0$. If $\Gamma_0$ is non-elementary then $\Lambda_{\Gamma_0}$ is infinite, perfect and contains loxodromic elements, see \cite[Thm.\ 2S + 2T]{Tukia}.

It is a result of Bowditch that a relatively hyperbolic group $(\Gamma,\peripherals)$ acts as a geometrically finite convergence group on its boundary $\partial(\Gamma,\peripherals)$, with maximal parabolic subgroups given by the conjugates of subgroups in $\peripherals$ \cite[\S6]{Bowditch_relhyp}. Indeed, there is a characterization of relatively hyperbolic groups in terms of geometrically finite convergence group actions \cite{Yaman}.
In particular, when $\Gamma$ is non-elementary, $\partial(\Gamma,\peripherals)$ is uncountable and perfect.
For a general reference on relatively hyperbolic groups, see e.g.\ \cite{Bowditch_relhyp} or \cite{Teddy_EGF}.

\subsection{Convergence groups acting on the circle} 
We have the following theorem about convergence groups acting on the circle.

\begin{theorem}[Gabai \cite{Gabai} +Tukia \cite{Tukia-Fuchsian}] \label{thm:convergence groups on S1 are Fuchsian}
If a group $\Gamma$ acts as a convergence group on $S^1$, then $\Gamma$ is isomorphic to a Fuchsian group.
\end{theorem}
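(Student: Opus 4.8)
The plan is to prove this by a dichotomy on the dynamics of the convergence action, according to whether it is \emph{uniform} --- that is, whether $\Gamma$ acts properly discontinuously and cocompactly on the space of ordered distinct triples of $S^1$ --- or not. In the non-uniform case the conclusion is a theorem of Tukia \cite{Tukia-Fuchsian}, and in the uniform case it is the Convergence Group Theorem of Gabai \cite{Gabai} (proved independently by Casson--Jungreis). So what I would really be writing down is how these two deep inputs combine, together with the soft reductions that set them up.

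First I would reduce to the non-elementary case: if $\Gamma$ preserves a subset of $S^1$ of cardinality at most two, then by the structure theory of convergence groups $\Gamma$ is finite or virtually infinite cyclic, and a short analysis of the possible finite-order and loxodromic homeomorphisms of $S^1$ shows that such a $\Gamma$ is conjugate to a discrete elementary subgroup of $\PGL_2(\Rb)$, hence is Fuchsian. So assume $\Gamma$ is non-elementary, so that its limit set $\Lambda_\Gamma \subseteq S^1$ is infinite and perfect. If $\Lambda_\Gamma \neq S^1$, then $\Gamma$ cannot act cocompactly on distinct triples of $S^1$ (triples approaching the complementary arcs of $\Lambda_\Gamma$ escape every compact set), so the action is non-uniform and Tukia's theorem applies directly; concretely, in this ``second kind'' situation one reconstructs a hyperbolic-plane picture by collapsing the closures of the $\Gamma$-invariant family of complementary arcs and uniformizing. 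This leaves the ``first kind'' case $\Lambda_\Gamma = S^1$.

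For $\Lambda_\Gamma = S^1$ I would again split according to whether the action on distinct triples is cocompact. If it is not, the action is still non-uniform, and Tukia's theorem produces a $\Gamma$-equivariant homeomorphism of $S^1$ conjugating $\Gamma$ onto a Fuchsian group --- the point being to recover a genuine hyperbolic $2$-orbifold from the bounded-parabolic cusp data and conical structure of a geometrically finite action (and to rule out pathologies in the geometrically infinite case). If the action \emph{is} cocompact on triples, so that $\Gamma$ is a uniform convergence group --- equivalently, a word-hyperbolic group with circle boundary --- one invokes the Convergence Group Theorem: such a $\Gamma$ is virtually the fundamental group of a closed hyperbolic surface, hence a cocompact Fuchsian group. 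Gabai's proof proceeds intrinsically on $S^1$, after a reduction to the torsion-free case, by using the convergence dynamics to build an invariant structure on $S^1$ (an invariant lamination / ideal triangulation) that witnesses a hyperbolic structure on the quotient; the Casson--Jungreis argument instead routes through Mess's observation that, up to finite index and a central $\Zb$, such a $\Gamma$ is the fundamental group of a closed aspherical $3$-manifold which one shows is Seifert fibered, using the Seifert Fibered Space Conjecture.

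The main obstacle is exactly this last step, the uniform convergence case: it is one of the landmark results of the area and admits no short proof. Everything else --- the elementary reduction, the observation that the second-kind case is non-uniform, and Tukia's treatment of the remaining non-uniform first-kind actions --- is comparatively routine, using only standard convergence-group dynamics and the structure of geometrically finite group actions. So the proof I would present reduces, by these soft arguments, to the first-kind uniform situation, and then cites Gabai (together with Tukia for the non-uniform cases) to conclude.
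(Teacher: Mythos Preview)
The paper does not actually give its own proof of this theorem: it is stated as a black-box result attributed to Gabai and Tukia (the attribution ``Gabai \cite{Gabai} + Tukia \cite{Tukia-Fuchsian}'' is the entire content the paper supplies), and is then used as input in later arguments. So there is nothing in the paper to compare your proposal against line-by-line.

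That said, your outline is a correct and standard account of how the two cited references combine. The dichotomy you set up---elementary versus non-elementary, and within the non-elementary case uniform versus non-uniform convergence action on $S^1$---is exactly the right one; Tukia's paper handles the non-uniform (and in particular second-kind) cases, while Gabai's Convergence Group Theorem handles the uniform case. Your acknowledgement that the uniform case is the deep input, with no short self-contained argument, is also accurate. In short: your proposal is not so much a new proof as a correct expository unpacking of the citation the paper makes, and it matches the intended content of that citation.
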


We can combine this with Dunwoody's accessibility theorem to obtain a statement about finite extensions of Fuchsian groups, whose proof we include here for completeness. For the notion of the fundamental group of a finite graph of groups we refer the reader to \cite{Serre-trees}.

\begin{theorem}\label{GabDun} Let $\Gamma'$ be a finitely generated discrete subgroup of $\PSL_2(\Rb)$. Suppose that $1\to F \to \Gamma \to \Gamma'\to 1$ is a short exact sequence of groups where $F$ is finite. Then $\Gamma$ contains a finite-index subgroup isomorphic to a free group or a surface group.\end{theorem}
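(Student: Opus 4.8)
The plan is to reduce the statement to the case of a finitely generated Fuchsian group and then analyze its structure via graphs of groups. First I would dispose of the finite normal subgroup $F$: since $F$ is finite and $\Gamma$ is finitely generated, $\Gamma$ acts on the Bass--Serre tree / Cayley complex in a way that lets us pass to a finite-index torsion-free subgroup, but more directly, it suffices to find a finite-index subgroup of $\Gamma'$ that is free or a surface group and then pull it back: its preimage in $\Gamma$ is a finite extension of a free or surface group by $F$, and one checks separately that such a finite extension is itself virtually free or virtually a surface group (for the free case this is Stallings--Swan; for the surface case one uses that a finite extension of a surface group acts properly cocompactly on $\Hb^2$, hence is itself a surface group up to finite index by, e.g., the solution to the Nielsen realization problem, or simply by noting it is a cocompact Fuchsian group). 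So we are reduced to proving: a finitely generated discrete subgroup $\Gamma'$ of $\PSL_2(\Rb)$ is virtually free or a surface group.

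Next I would split into cases according to whether $\Gamma'$ is a lattice. If $\Gamma'$ is a cocompact Fuchsian group (a lattice with no parabolics — equivalently $\Hb^2/\Gamma'$ is a closed orbifold), then by Selberg's lemma $\Gamma'$ has a torsion-free finite-index subgroup, which is then the fundamental group of a closed hyperbolic surface, i.e.\ a surface group. If $\Gamma'$ is a non-uniform lattice or is not a lattice at all, then $\Hb^2/\Gamma'$ is a complete hyperbolic orbifold of finite or infinite volume which is \emph{not} closed; in either case it is homotopy equivalent to a finite graph (it deformation retracts onto a compact core with nonempty boundary, or a spine), so $\Gamma'$ is virtually free. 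This case analysis is elementary once one has the orbifold/Fuchsian dictionary, but let me instead present the argument in a way that only uses the tools the excerpt has set up, namely Theorem~\ref{thm:convergence groups on S1 are Fuchsian} and Dunwoody accessibility.

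Here is that cleaner route. After passing to a torsion-free finite-index subgroup via Selberg, $\Gamma'$ is a torsion-free finitely generated discrete subgroup of $\PSL_2(\Rb)$; it is hyperbolic relative to its cusp subgroups, which are infinite cyclic. By Dunwoody's accessibility theorem, $\Gamma'$ is the fundamental group of a finite graph of groups with finite (hence, being torsion-free, trivial) edge groups and vertex groups that are either one-ended or finite. Each one-ended vertex group $V$ is a finitely generated torsion-free discrete subgroup of $\PSL_2(\Rb)$, and its being one-ended forces the limit set $\Lambda_V \subset S^1$ to be connected and perfect, hence all of $S^1$ (a proper closed connected perfect subset of $S^1$ is impossible for a convergence action with more than two ideal points unless it is all of $S^1$); alternatively, a one-ended finitely generated Fuchsian group has no parabolics and is therefore a cocompact lattice. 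In either formulation, $V$ acts cocompactly on $\Hb^2$, so $V$ is a closed surface group. Thus $\Gamma'$ is a free product of surface groups and a free group. A free product of a single surface group with a (possibly trivial) free group is virtually a surface group only when there is exactly one surface-group factor and no free factor; in general such a free product need not be virtually a surface group — so I would argue that at most one one-ended vertex group occurs with no free part by the following dimension/boundary count: if $\Gamma'$ is a lattice then $\Hb^2/\Gamma'$ is a closed orbifold and the graph of groups is a single vertex, giving a surface group; otherwise $\Gamma'$ is not a lattice, its limit set is a Cantor set, no one-ended factor can occur, and $\Gamma'$ is free.

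The main obstacle I anticipate is the bookkeeping in the finite-extension step — carefully showing that a finite extension of a surface group is virtually a surface group, not merely virtually free — and making sure the accessibility argument correctly rules out configurations with both a surface-group factor and extra free factors (equivalently, that a non-cocompact finitely generated Fuchsian group is virtually free). Both points are standard, but they are exactly where a sloppy argument would break: the first needs that finite-by-surface groups are Fuchsian (via an action on $\Hb^2$ or Nielsen realization), and the second needs the orbifold-with-boundary picture or, group-theoretically, that a finitely generated Fuchsian group with a parabolic has infinitely many ends. With those in hand the proof assembles as above.
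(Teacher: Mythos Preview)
Your overall plan---reduce to $\Gamma'$, classify $\Gamma'$ as virtually free or virtually a surface group, then lift through the finite kernel $F$---is reasonable, and the free half of the lifting is fine: a finite-by-free group acts on a tree with finite vertex stabilizers and is therefore virtually free. (The paper instead runs Dunwoody directly on $\Gamma$, using that $\partial_\infty\Gamma=\partial_\infty\Gamma'$ is totally disconnected to force all vertex groups in the splitting of $\Gamma$ to be virtually cyclic.)

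The surface half of the lifting, however, has a genuine gap. You want: if $1\to F\to H\to S\to 1$ with $F$ finite and $S$ a closed surface group, then $H$ is virtually a surface group. You propose to get this from Nielsen realization, but Nielsen realization addresses extensions of the form $1\to \pi_1\Sigma \to H \to Q \to 1$ with $Q$ finite, i.e.\ \emph{surface-by-finite}, not \emph{finite-by-surface}. In your situation the surface group is the quotient, not a normal subgroup of $H$, so there is no finite group of outer automorphisms of a surface group to realize. Nor is it clear a priori that $H$ acts properly on $\Hb^2$: pulling back the $S$-action through $H\to S$ puts $F$ in every point stabilizer, and there is no evident way to repair this without already knowing $H$ contains a finite-index surface subgroup.

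The paper's route avoids this entirely by applying the convergence group theorem to $\Gamma$ itself rather than to $\Gamma'$: since $F$ is finite, $\Gamma$ is hyperbolic with $\partial_\infty\Gamma=\partial_\infty\Gamma'\cong S^1$, and then Theorem~\ref{thm:convergence groups on S1 are Fuchsian} gives directly that $\Gamma$ is virtually a surface group. If you want to salvage your lifting approach, one correct argument is cohomological: after passing to the centralizer of $F$ the extension is central, classified by a class in $H^2(S;F)$, and restricting to a finite-index subgroup of $S$ of degree $|F|$ kills this class, so the extension splits there.

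A smaller point: in your ``cleaner route'' the dichotomy ``lattice $\Rightarrow$ closed quotient'' versus ``not a lattice $\Rightarrow$ Cantor limit set'' misfiles the non-uniform lattices, which have limit set all of $S^1$ but noncompact quotient. The split you want is cocompact versus not; in the latter case the torsion-free group is free regardless of whether the limit set is $S^1$ or a Cantor set.
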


\begin{proof} Since $\Gamma'$ acts properly discontinuously on the hyperbolic plane $\Hb^2$, it is virtually a free or a surface group. If $\Gamma'$ is virtually a surface group then $\Gamma$ is a hyperbolic group with $\partial_{\infty}\Gamma=\partial_{\infty}\Gamma'\cong S^1$. By the work of Gabai \cite{Gabai}, it follows that $\Gamma$ is virtually a surface group.

Now suppose that $\Gamma'$ is virtually free. Note that $\Gamma$ is a finitely presented group, so by Dunwoody's accessibility theorem \cite{Dun}, $\Gamma$ splits as the fundamental group of a finite graph of groups $\mathcal{G}$ with edge groups of at most one end. Note that since $\partial_{\infty}\Gamma' \cong \partial_{\infty}\Gamma$ is totally disconnected, all vertex groups of $\mathcal{G}$ are virtually cyclic and thus $\Gamma$ itself is virtually free. 
\end{proof} 

We will use these in combination with the following topological characterization of the circle:

\begin{proposition}[cf.\ {\cite[p.\ 14]{Zimmer}}] \label{prop:not circle implies 2-eps connected}
If $M$ is a compact connected topological space that is not homeomorphic to $S^1$, then there exist distinct points $x, y \in M$ such that $M \smallsetminus \{x,y\}$ is connected.
\end{proposition}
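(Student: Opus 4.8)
The plan is to prove the contrapositive: assuming $M$ is compact, connected, and that for every pair of distinct points $x,y \in M$ the complement $M \smallsetminus \{x,y\}$ is disconnected, I will show $M \cong S^1$. The guiding principle is that a space in which every pair of points separates behaves like a cyclically ordered space. First I would record the weaker fact that $M \smallsetminus \{x\}$ is connected for every single point $x$ (otherwise, picking $x$ a cutpoint, one finds a second point whose removal, together with $x$, still leaves a connected piece, contradicting the hypothesis — one has to be a little careful here and argue via the components of $M \smallsetminus \{x\}$, using that $M$ is connected so each component's closure meets $\{x\}$). Having no cutpoints, together with the separation hypothesis, is exactly the classical characterization of the circle among Peano continua, but since $M$ is only assumed compact, connected (not locally connected, not metrizable, not Hausdorff even), I cannot invoke that black box directly and must be more hands-on.

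The key construction is a cyclic "betweenness" relation. For distinct $x,y$, write $M \smallsetminus \{x,y\} = U_{xy} \sqcup V_{xy}$ as a separation into nonempty open sets; since $M$ is connected, both $U_{xy} \cup \{x,y\}$ and $V_{xy}\cup\{x,y\}$ are connected, and I would like to know this separation is unique (exactly two components), which again follows from the no-cutpoint property applied inside each piece. Using these separations I define: $z$ lies between $x$ and $y$ if $z$ and $x$ are in different components of $M\smallsetminus\{y,\cdot\}$-type complements — more precisely one sets up the standard cyclic order $[x,y,z]$ and checks the axioms (any four points can be cyclically ordered consistently, betweenness is preserved under the obvious permutations). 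The separation hypothesis is what feeds each verification: whenever I need to know that removing two points breaks $M$ into exactly the right pieces, that is where it gets used.

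From a cyclic order on a compact connected space one extracts a continuous map to $S^1$: fix three points $a,b,c$ in cyclic order, use the induced linear orders on the three arcs to build an order-preserving map of each arc into an arc of $S^1$, glue them, and check continuity using that the order topology agrees with the subspace topology (this last point requires connectedness to rule out "gaps" and compactness to rule out "missing endpoints"). Injectivity comes from the order being a genuine total cyclic order (no two distinct points are order-equivalent — this is where "for every pair of distinct points" in the hypothesis, as opposed to "some pair", is essential), surjectivity from connectedness of the image forcing it to be all of $S^1$, and then a continuous bijection from a compact space to a Hausdorff space is a homeomorphism.

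The main obstacle, and the step I would budget the most care for, is establishing that each two-point complement $M\smallsetminus\{x,y\}$ has \emph{exactly} two connected components (not three or more) and that these vary in a controlled way as $x,y$ move — equivalently, promoting the bare hypothesis "disconnected" into the rigid combinatorial structure of a cyclic order. The potential subtlety is pathological point-set behavior: without local connectedness a "component" may be badly behaved, so I would work with quasi-components or with the clopen decomposition and lean on compactness to keep things finite. I expect that, as in Zimmer's treatment which the proposition cites, the cleanest route is to phrase everything in terms of the finitely many clopen pieces cut out by a finite set of points and take a limit, rather than reasoning about individual components directly.
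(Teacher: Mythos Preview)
The paper does not prove this proposition: it is stated with a citation to Zimmer and used as a black box throughout. There is therefore no in-paper argument to compare against.

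Your approach via the contrapositive is the standard one: the classical characterization of the circle among metric continua says that a nondegenerate continuum is homeomorphic to $S^1$ if and only if it is separated by every pair of its points (see e.g.\ Hocking--Young, \emph{Topology}, or Kuratowski, \emph{Topology II}). Two remarks. First, as you note, the proposition as stated assumes only ``compact connected topological space,'' whereas the classical result is for metric continua with more than one point; in every application in the paper $M$ is a Bowditch boundary, hence compact metrizable and perfect, so you should simply add those hypotheses rather than attempt a non-metrizable, non-Hausdorff argument (which is probably both unnecessary and hopeless). Second, your sketch of the ``no cutpoints'' step does not work as written: if $p$ is a cut point with $M \smallsetminus \{p\} = A \sqcup B$, then removing $p$ together with any $y \in A$ still leaves the separated set $(A \smallsetminus \{y\}) \sqcup B$, so you do not get a contradiction that way. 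The correct argument instead applies the non-cut-point existence theorem to each of the sub-continua $\bar A$ and $\bar B$ to produce non-cut points $q \in A$ and $r \in B$; then $(\bar A \smallsetminus \{q\}) \cup (\bar B \smallsetminus \{r\}) = M \smallsetminus \{q,r\}$ is connected (both pieces contain $p$), contradicting the hypothesis. With this repair, and working in the metrizable setting, your outline is the classical proof.
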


\subsection{Remarks on torsion}

At various points in our arguments we will need to pass to finite-index torsion-free subgroups. Here we collect a few remarks that help iron out minor technicalities that may arise from this process.

\begin{proposition} \label{prop:nilp by finite is nilp}
Finite extensions of virtually nilpotent groups are virtually nilpotent. \end{proposition}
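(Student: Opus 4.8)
The plan is to reduce to the known fact that nilpotent-by-finite groups are virtually nilpotent, which is a standard consequence of the fact that a nilpotent group is virtually torsion-free only trivially—rather, we should phrase things in terms of the characterization of virtually nilpotent groups. So suppose $1 \to N \to G \to F \to 1$ is a short exact sequence with $N$ virtually nilpotent and $F$ finite; we want $G$ virtually nilpotent. First, I would replace $N$ by a finite-index nilpotent subgroup $N_0 \leq N$. The subtlety is that $N_0$ need not be normal in $G$, only finite-index in $N$, hence finite-index in $G$. So the first step is to pass to the normal core: let $N_1 = \bigcap_{g \in G} g N_0 g^{-1}$, which is a normal subgroup of $G$ of finite index (the intersection is over finitely many conjugates since $N_0$ has finite index in $G$), and $N_1 \leq N_0$ is nilpotent as a subgroup of a nilpotent group. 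Now $G/N_1$ is finite, so $G$ is (nilpotent)-by-finite with the nilpotent part normal.

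It then remains to observe that a group $G$ with a normal nilpotent subgroup $N_1$ of finite index is virtually nilpotent—but this is exactly the hypothesis in reverse, so I need a genuine argument here. The cleanest route is via Gromov's theorem (a finitely generated group is virtually nilpotent iff it has polynomial growth) if we are willing to assume finite generation; but the proposition as stated has no finite-generation hypothesis, so I would avoid that. Instead, recall the classical fact: a group $G$ containing a finite-index subgroup that is nilpotent (equivalently, virtually nilpotent) has a finite-index subgroup that is nilpotent. This follows because virtual nilpotence is detected by the existence of a finite-index subgroup with a central series of bounded length with "almost torsion-free" quotients; more concretely, one can use that $G$ is virtually nilpotent iff it has a finite-index subgroup $H$ and a finite normal subgroup $T \trianglelefteq H$ with $H/T$ nilpotent—no, this still requires an argument.

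The honest main step, and the one I expect to be the only real content, is: \emph{if $G$ has a normal finite-index nilpotent subgroup, then $G$ is virtually nilpotent}. I would prove this by induction on the nilpotency class $c$ of $N_1$. For $c = 0$ (trivial $N_1$) the group $G$ is finite, hence virtually nilpotent. For the inductive step, consider the center $Z = Z(N_1)$, which is characteristic in $N_1$ and hence normal in $G$. The conjugation action of $G$ on $Z$ factors through the finite group $G/N_1$ (since $N_1$ acts trivially on its own center)—wait, that is not quite right either, as $N_1$ is not central; but $N_1$ does centralize $Z$, so the action of $G$ on $Z$ factors through $G/C_G(Z) $, a quotient of the finite group $G/N_1$ composed with... let me just say: $N_1 \leq C_G(Z)$, so $C_G(Z)$ has finite index in $G$ and contains $N_1$; replacing $G$ by $C_G(Z)$, we may assume $Z$ is central in $G$. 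Then $G/Z$ has the normal finite-index nilpotent subgroup $N_1/Z$ of class $c-1$, so by induction $G/Z$ has a finite-index nilpotent subgroup $\bar H$; its preimage $H \leq G$ is a finite-index central extension of a nilpotent group by $Z$, hence nilpotent (a central extension of a nilpotent group is nilpotent). This completes the induction and the proof; the bookkeeping about passing between $G$, $C_G(Z)$, and normal cores at each stage is routine but should be done carefully to ensure the finite-index subgroup we ultimately produce is a subgroup of the original $G$.
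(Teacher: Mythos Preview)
You have read the short exact sequence in the wrong direction. In this paper, a ``finite extension'' of a group $N$ means a group $G$ fitting into $1 \to F \to G \to N \to 1$ with $F$ finite (compare Theorem~\ref{GabDun} and its use in the proof of Theorem~\ref{thm:SL3R}, where one must pass from $\Gamma_0/\ker\rho$ back up to $\Gamma_0$). Your setup $1 \to N \to G \to F \to 1$ with $N$ virtually nilpotent and $F$ finite is the \emph{other} direction, and that direction is immediate from the definition: any finite-index nilpotent subgroup of $N$ is already a finite-index nilpotent subgroup of $G$, so $G$ is virtually nilpotent with no further argument needed. In particular, the lengthy induction on nilpotency class at the end of your proposal is proving a tautology (a group with a finite-index nilpotent subgroup is virtually nilpotent).

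The substantive statement is: if $G$ has a finite normal subgroup $F$ with $G/F$ virtually nilpotent, then $G$ is virtually nilpotent. The paper gives two arguments. The quick one invokes Gromov's polynomial growth theorem (you correctly note this would need finite generation; in the paper's applications this is always satisfied). The elementary argument goes as follows: after pulling back a finite-index subgroup one may assume $G/F$ is nilpotent; let $\Gamma' = \ker(G \to \Aut(F))$ be the centralizer of $F$ in $G$, which has finite index since $\Aut(F)$ is finite. The image of $\Gamma'$ in $G/F$ is nilpotent, so some term $\mathfrak{g}_k(\Gamma')$ of the lower central series lies in $F$; since $[\Gamma',F]=1$, the next term $\mathfrak{g}_{k+1}(\Gamma')$ vanishes and $\Gamma'$ is nilpotent. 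You should rewrite your proof for the correct direction, and this centralizer trick (or your own central-series induction, suitably reoriented) will do the job.
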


\begin{proof} The proposition follows immediately from Gromov's polynomial growth theorem \cite{Gromov-nil}, since finite extensions of virtually nilpotent groups still have polynomial growth. 

We also provide an elementary proof here for the interested reader. Suppose that $\Gamma/F$ is nilpotent, where $F$ is a finite normal subgroup of $\Gamma$. There exists a finite-index subgroup $\Gamma'$ of $\Gamma$ which is centralized by $F$: this is the kernel of the homomorphism $\Gamma \to \Aut(F)$ given by $g \mapsto (f \mapsto gfg^{-1})$. Since $\Gamma'/F$ is nilpotent, there exists $k>0$ such that $\mathfrak{g}_k(\Gamma')F/F=1$ or in other words $\mathfrak{g}_k(\Gamma') \subset F$, where $\mathfrak{g}_r(\Gamma')$, $r\in \mathbb{N}$, denotes the $r$\textsuperscript{th} term of the lower central series of $\Gamma'$. Since $[F,\Gamma']=1$, $\mathfrak{g}_{k+1}(\Gamma')=1$, so $\Gamma'$ is nilpotent.\end{proof}

The following proposition can be obtained by Bass--Serre theory or by elementary covering space theory. 

\begin{proposition}\label{graph-torsionfree} Let $\mathcal{G}$ be a finite graph of groups. Suppose that the vertex groups of $\mathcal{G}$ are virtually torsion-free and its edge groups are finite. Then $\pi_1(\mathcal{G})$ contains a torsion-free finite-index subgroup which is isomorphic to a free product of the form $\Gamma_1\ast \cdots \ast \Gamma_{m}$, where for each $i=1,\ldots,m$, $\Gamma_{i}$ is either infinite cyclic or isomorphic to a finite-index subgroup of some vertex group of $\mathcal{G}$. \end{proposition}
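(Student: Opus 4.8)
The plan is to deduce the statement from the action of $G:=\pi_1(\mathcal{G})$ on its Bass--Serre tree $T$: recall that the edge stabilizers of this action are conjugates of the (finite) edge groups of $\mathcal{G}$ and the vertex stabilizers are conjugates of the vertex groups. The whole proposition will follow once we produce a \emph{torsion-free} finite-index subgroup $G_0\le G$. Indeed, such a $G_0$ automatically meets every edge stabilizer trivially, so by Bass--Serre theory $G_0$ is the fundamental group of a graph of groups on the finite graph $G_0\backslash T$ with trivial edge groups; such a group splits as a free product $\Gamma_1\ast\cdots\ast\Gamma_m$ in which each $\Gamma_i$ is either infinite cyclic (one factor for each generator of a free group whose rank is the first Betti number of $G_0\backslash T$) or the $G_0$-stabilizer of a vertex of $T$, which is a finite-index subgroup of a conjugate of some vertex group of $\mathcal{G}$, hence isomorphic to a finite-index subgroup of a vertex group. (Trivial vertex stabilizers are omitted from the free product; in the degenerate case where all vertex groups are finite and the underlying graph is a tree, $G$ itself is finite and $m=0$.)

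So the real content is that $G$ is virtually torsion-free, and the naive attempt — taking the subgroup generated by torsion-free finite-index subgroups of the vertex groups — fails because of the edge identifications. Instead, for each vertex $v$ I would choose a torsion-free finite-index normal subgroup $N_v\trianglelefteq G_v$ (possible since $G_v$ is virtually torsion-free and subgroups of torsion-free groups are torsion-free). Since each edge group $G_e$ is finite and each $N_v$ is torsion-free, the composite maps $G_e\hookrightarrow G_v\twoheadrightarrow G_v/N_v$ are injective; hence the \emph{finite} quotients $\overline{G_v}:=G_v/N_v$, together with the original edge groups and these composite edge monomorphisms, form a finite graph of finite groups $\overline{\mathcal{G}}$ on the same underlying graph, and the vertex quotient maps induce a surjection $\psi\colon G\twoheadrightarrow\pi_1(\overline{\mathcal{G}})$. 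The group $\pi_1(\overline{\mathcal{G}})$ is virtually free, being the fundamental group of a finite graph of finite groups.

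Next I would analyze $\ker\psi$. Since vertex groups embed into $\pi_1(\overline{\mathcal{G}})$, the restriction of $\psi$ to a vertex group $G_v$ is the quotient $G_v\to\overline{G_v}$, so $\ker\psi\cap G_v=N_v$; as $\ker\psi$ is normal, $\ker\psi\cap gG_vg^{-1}=gN_vg^{-1}$ is torsion-free for every $g\in G$, while $\ker\psi\cap gG_eg^{-1}$ is trivial (it lies in a finite group on which $\psi$ is injective). Thus $\ker\psi$ acts on $T$ with trivial edge stabilizers and torsion-free vertex stabilizers, so it is a free product of torsion-free groups together with a free group, hence torsion-free. Pulling back a torsion-free (free) finite-index subgroup $\bar K\le\pi_1(\overline{\mathcal{G}})$ then gives a finite-index subgroup $G_0:=\psi^{-1}(\bar K)\le G$ sitting in $1\to\ker\psi\to G_0\to\bar K\to 1$; any finite-order element of $G_0$ maps to a finite-order, hence trivial, element of $\bar K$, so lies in the torsion-free group $\ker\psi$ and is trivial. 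Hence $G_0$ is torsion-free, and by the first paragraph we are done.

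The main obstacle is establishing virtual torsion-freeness of $G$; the key maneuver is the passage to the auxiliary finite graph of \emph{finite} groups $\overline{\mathcal{G}}$, after which the tree action makes it transparent that $\ker\psi$ is torsion-free and that $G$ is therefore torsion-free-by-virtually-free. The remaining points are routine: that $\overline{\mathcal{G}}$ is a bona fide graph of groups (injectivity of the edge maps, which is exactly where finiteness of the edge groups enters), and the standard structural facts invoked — vertex groups embed into the fundamental group of a graph of groups, a group acting on a tree with trivial edge stabilizers is the free product of its vertex stabilizers with a free group, and the fundamental group of a finite graph of finite groups is virtually free.
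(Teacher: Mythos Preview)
Your argument is correct and follows precisely the Bass--Serre-theoretic route that the paper indicates but does not spell out (the paper merely asserts that the proposition can be obtained by Bass--Serre theory or elementary covering space theory). One inessential slip: your parenthetical claim that $G$ is finite whenever all vertex groups are finite and the underlying graph is a tree fails already for the infinite dihedral group $\Zb/2 * \Zb/2$, but this does not affect the argument, since in that situation $G_0$ is simply a free group and the conclusion still holds with all $\Gamma_i$ infinite cyclic.
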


\subsection{Splittings and boundaries of relatively hyperbolic groups}

Let $\Gamma$ be a non-elementary relatively hyperbolic group with peripheral subgroups $\peripherals$.
Let $X(\Gamma,\peripherals)$ be a(ny) Gromov model associated to this relatively hyperbolic structure. Note that $\partial(\Gamma,\peripherals) \cong \partial_\infty X(\Gamma,\peripherals)$.

\begin{definition} We say that $\Gamma$ splits non-trivially over a group $H$ relative to $\peripherals$ if we can write $\Gamma$ as an amalgamated free product or HNN extension over $H$, where each subgroup in $\peripherals$ is conjugate into one of the factors.

Similarly, we say that $\Gamma$ splits (non-trivially) over finite groups relative to $\peripherals$ if $\Gamma$ splits as a (non-trivial) graph of groups, with finite edge groups and every peripheral subgroup conjugate into a vertex group. 
\end{definition}

\begin{theorem}[{\cite[Prop.\ 10.1-3]{Bowditch_relhyp}}, see also {\cite[\S3]{DahmaniGroves}}]
\label{thm:splittings from disconnected Bowditch boundary}
Let $(\Gamma,\peripherals)$ be a non-elementary relatively hyperbolic group. The boundary $\partial(\Gamma,\peripherals)$ is disconnected if and only if $(\Gamma,\peripherals)$ splits non-trivially over finite groups relative to $\peripherals$, such that no vertex group splits non-trivially over finite subgroups relative to its peripherals.

In such a splitting, each vertex group is hyperbolic relative to the peripheral subgroups that it contains.
\end{theorem}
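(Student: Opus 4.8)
The plan is to prove the two implications of the equivalence separately, and then read off the asserted properties of the vertex groups from the geometry of the resulting splitting. Throughout I regard $(\Gamma,\peripherals)$ as acting on $M=\partial(\Gamma,\peripherals)$ as a geometrically finite convergence group, via Bowditch's construction \cite[\S 6]{Bowditch_relhyp}, and I will use repeatedly that a \emph{finite} subgroup of a convergence group has empty limit set.

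First, suppose $(\Gamma,\peripherals)$ splits non-trivially over finite groups relative to $\peripherals$. Collapsing all but one orbit of edges in the associated Bass--Serre tree, I may assume the splitting is a single amalgam $\Gamma=A\ast_C B$ or an HNN extension $\Gamma=A\ast_C$ with $C$ finite and each $P\in\peripherals$ conjugate into a vertex group. Remove one edge $e$ from the Bass--Serre tree $T$, so that $T\smallsetminus e=T_1\sqcup T_2$. I would show that the set $U_i\subset M$ of points accumulated ``from the $T_i$ side'' is clopen --- this is exactly where finiteness of $\mathrm{Stab}(e)$ is used, to guarantee the cut is clean, so that $U_1$ and $U_2$ are disjoint and together cover $M$ --- and non-empty, since non-triviality of the splitting and non-elementarity of $\Gamma$ force each half-tree $T_i$ to be infinite. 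Hence $M=U_1\sqcup U_2$ is disconnected. The only slightly delicate point is the clopen-ness, which is routine convergence-group bookkeeping; compare the combination picture in \cite{Bowditch_relhyp} and \cite{DahmaniGroves}.

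Conversely, suppose $M$ is disconnected; this is the substantial direction, and the main obstacle. A proper non-empty clopen subset of $M$ yields a non-trivial $\Gamma$-almost invariant subset of $\Gamma$ relative to $\peripherals$ --- equivalently, the pair $(\Gamma,\peripherals)$ has more than one relative end --- and I would then apply Bowditch's relative analogue of Stallings' ends theorem \cite[\S 10]{Bowditch_relhyp} to extract a non-trivial splitting of $\Gamma$ over a finite group relative to $\peripherals$; here each peripheral subgroup, being infinite and fixing its parabolic point, is elliptic and so conjugate into a vertex group. To promote this to a splitting in which no vertex group splits further over finite subgroups relative to its own peripherals, I would invoke accessibility: $\Gamma$ is finitely presented, so by Dunwoody's accessibility theorem \cite{Dun} in the relative form of \cite{Bowditch_relhyp} (see also \cite{DahmaniGroves}) the process of iteratively refining over finite edge groups terminates, producing a maximal such graph-of-groups decomposition $\mathcal{G}$. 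The hard part is precisely this package: making the relative versions of Stallings' theorem and of Dunwoody's accessibility work together while keeping the peripheral subgroups elliptic at every stage.

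Finally, for the statement about the vertex groups, fix a vertex group $\Gamma_v$ of $\mathcal{G}$ and let $T_v$ be the subtree it stabilises. As in the second paragraph, the ``$T_v$-side'' subset $M_v\subset M$ is a compact $\Gamma_v$-invariant set on which $\Gamma_v$ acts as a convergence group, and I would check that this action is geometrically finite with bounded parabolic points exactly the parabolic points of $(\Gamma,\peripherals)$ lying in $M_v$; each of these is stabilised by a conjugate inside $\Gamma_v$ of a subgroup of $\peripherals$, and these are precisely the peripheral subgroups carried by $\Gamma_v$. By Yaman's characterisation \cite{Yaman} this is exactly the assertion that $\Gamma_v$ is hyperbolic relative to the peripheral subgroups it contains. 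This last step is the relatively hyperbolic restriction statement for finite-edge splittings, available in \cite{Bowditch_relhyp} and \cite{DahmaniGroves}, and completes the proof.
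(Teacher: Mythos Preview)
The paper does not give its own proof of this theorem: it is stated with attribution to \cite[Prop.\ 10.1--3]{Bowditch_relhyp} and \cite[\S3]{DahmaniGroves}, and no argument is supplied. There is therefore nothing in the paper to compare your proposal against; the authors are simply importing the result from the literature.

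Your outline is a faithful summary of the strategy in those references: the forward direction via a clopen partition of the boundary coming from the Bass--Serre tree, the converse via a relative Stallings-type theorem plus accessibility, and the identification of the vertex groups as relatively hyperbolic via the convergence-group characterisation. One point to be careful about: you assert that $\Gamma$ is finitely presented in order to invoke Dunwoody's accessibility, but a relatively hyperbolic group need not be finitely presented unless its peripheral subgroups are. Bowditch's argument in \cite[\S10]{Bowditch_relhyp} does not proceed by citing \cite{Dun} directly; rather, he establishes accessibility in this setting by a more direct geometric argument using the structure of the boundary. In the applications in the present paper this is harmless, since the peripherals are virtually nilpotent and hence finitely presented, but as stated the theorem does not carry that hypothesis, so your justification for the termination step should be adjusted accordingly.
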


Suppose we have a non-trivial splitting of $\Gamma$ over finite groups relative to $\peripherals$.
Then, from the discussion following Proposition 10.3 in \cite{Bowditch_relhyp}, there is a natural continuous inclusion of 
the boundary of each of the vertex subgroups (i.e.\ the limit set of each of these subgroups in $X(\Gamma,\peripherals) \cup \partial(\Gamma,\peripherals)$)
into $\partial(\Gamma,\peripherals)$.
Indeed, each component of $\partial(\Gamma,\peripherals)$ is either a single point or the boundary of an infinite non-peripheral vertex group.
Singleton components are either fixed
points of vertex groups which are peripheral, or else corresponds to 
ideal points of the tree corresponding to the splitting.

We remark that this is \emph{not} the JSJ splitting over elementary subgroups given by Guirardel--Levitt \cite{GuirardelLevittJSJ} and described in Haulmark--Hruska \cite{HaulmarkHruskaJSJ}: for instance, the edge groups in the JSJ splitting are not in general finite. The JSJ splitting can be viewed as a further canonical maximal splitting of each of the vertex groups with connected boundary.

\subsection{Relatively Anosov subgroups} Relatively Anosov representations $\rho \colon (\Gamma,\Pc) \to \SL_d(\Rb)$ may be defined in terms of limit maps, which are $\rho$-equivariant homeomorphisms between the Bowditch boundary $\partial(\Gamma,\Pc)$ and a closed $\rho(\Gamma)$-invariant subset of the flag variety, which we will call the flag limit set. 

For $1\leq k\leq d-1$, denote by $\mathsf{Gr}_{k}(\mathbb{R}^d)$ the Grassmannian of $k$-planes in $\Rb^d$.

\begin{definition}[\cite{ZZ1}]\label{definition-relAnosov}
{\em For $d\in \mathbb{N}$ and $1\leq k\leq \frac{d}{2}$, a representation $\rho\colon(\Gamma, \mathcal{P})\to \mathsf{SL}_d(\Rb)$ is \emph{relatively $k$-Anosov} if admits a pair of continuous $\rho$-equivariant maps $$(\xi_{\rho}^k,\xi_{\rho}^{d-k}):\partial(\Gamma, \mathcal{P})\rightarrow \mathsf{Gr}_k(\mathbb{R}^d)\times \mathsf{Gr}_{d-k}(\mathbb{R}^d)$$  called {\em the limit maps of $\rho$}, with the following properties:
\medskip

\noindent \textup{(i)} $\xi_{\rho}^k$ and $\xi_{\rho}^{d-k}$ are \emph{transverse}: for every $x,y\in \partial(\Gamma,\mathcal{P})$, $x,\neq y$, we have $$\mathbb{R}^d=\xi_{\rho}^k(x)\oplus \xi_{\rho}^{d-k}(y).$$

\noindent \textup{(ii)} $\xi_\rho^k$ and $\xi_\rho^{d-k}$ are \emph{strongly dynamics-preserving}: given $(\gamma_n)_{n\in \mathbb{N}}\subset \Gamma$ such that $\gamma_n \to x\in \partial(\Gamma, \mathcal{P})$ and $\gamma_n^{-1} \to y \in \partial(\Gamma,\mathcal{P})$, $\rho(\gamma_n) V \to \xi^k(x)$ for all $V\in \mathsf{Gr}_{k}(\mathbb{R}^d)$ such that $\mathbb{R}^d=V\oplus \xi^{d-k}(y)$.}
\end{definition} 

Additional properties of the limit maps follow from the ones in the definition:
\medskip

\noindent \textup{(iii)} $\xi_{\rho}^k$ and $\xi_{\rho}^{d-k}$ are {\em compatible}, i.e. $\xi_{\rho}^{k}(x)\subset \xi_{\rho}^{d-k}(x)$ for every $x\in \partial(\Gamma,\mathcal{P})$.\\
\noindent \textup{(iv)} $\xi_{\rho}^{k}$ (resp. $\xi_{\rho}^{d-k}$) are {\em dynamics-preserving} at proximal points, i.e.\ all loxodromic $\gamma \in (\Gamma,\peripherals)$ are sent to biproximal elements $\rho(\gamma) \in \SL_d(\Rb)$, and $\xi_\rho^k(\gamma^+) \in \mathsf{Gr}_k(\mathbb{R}^d)$ (resp. $\xi_\rho^{d-k}(\gamma^+) \in \mathsf{Gr}_{d-k}(\mathbb{R}^d)$) is the unique attracting fixed point of $\rho(\gamma)$ in $\mathsf{Gr}_{i}(\mathbb{R}^d)$ (resp. $\mathsf{Gr}_{d-k}(\mathbb{R}^d)$).
\medskip

Below we refer to limit sets in the Bowditch boundary, and by the compatibility and transversality of the limit maps, this is equivalent to talking about subsets of the flag limit set. 

\begin{proposition} \label{prop:peripheral unique flags} Let $(\Gamma, \mathcal{P})$ be a non-elementary relatively hyperbolic group which is virtually torsion-free. The limit set of a subgroup $\Gamma'$ of $\Gamma$ is a singleton if and only if $\Gamma'$ is virtually contained in a conjugate of a peripheral subgroup.
\begin{proof}
A peripheral subgroup has a singleton limit set in the Bowditch boundary. Conversely, if a subgroup has a singleton limit set, then it cannot contain any infinite-order non-peripheral elements.
\end{proof}
\end{proposition}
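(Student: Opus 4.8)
The plan is to run the argument through Bowditch's description of $(\Gamma,\peripherals)$ as a geometrically finite convergence group acting on $M=\partial(\Gamma,\peripherals)$, in which the maximal parabolic subgroups are precisely the conjugates of the members of $\peripherals$, together with the standard trichotomy of elements of a relatively hyperbolic group into finite-order, parabolic, and loxodromic elements. We assume $\Gamma'$ is infinite, a finite group having empty limit set.

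For the ``if'' direction, suppose a finite-index subgroup $\Gamma_0'\le\Gamma'$ is contained in a conjugate $gPg^{-1}$ with $P\in\peripherals$. As a maximal parabolic subgroup, $P$ fixes a unique point $p\in M$ and has limit set $\{p\}$; hence $gPg^{-1}$ has limit set $\{g\cdot p\}$, and therefore so does the infinite subgroup $\Gamma_0'$. Since limit sets in convergence group actions are unchanged on passing to finite-index subgroups, $\Lambda_{\Gamma'}=\Lambda_{\Gamma_0'}=\{g\cdot p\}$ is a singleton.

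For the ``only if'' direction, suppose $\Lambda_{\Gamma'}=\{x\}$. First I would use virtual torsion-freeness: fix a finite-index torsion-free subgroup $\Gamma_0\le\Gamma$ and set $\Gamma_0'=\Gamma'\cap\Gamma_0$, a finite-index subgroup of $\Gamma'$, so that $\Lambda_{\Gamma_0'}=\{x\}$ and $\Gamma_0'$ is infinite. Being infinite and torsion-free, $\Gamma_0'$ contains an element $g$ of infinite order. This $g$ cannot be loxodromic, for a loxodromic element has two distinct fixed points, both lying in $\Lambda_{\langle g\rangle}\subseteq\Lambda_{\Gamma_0'}$, contradicting $|\Lambda_{\Gamma_0'}|=1$; hence $g$ is parabolic. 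Then $g$ lies in a conjugate $hPh^{-1}$ of a peripheral subgroup and fixes its bounded parabolic point $h\cdot p$; since $g$ also fixes the $\Gamma_0'$-invariant singleton $\{x\}$, and a parabolic element fixes exactly one point of $M$, it follows that $x=h\cdot p$. In particular $x$ is a bounded parabolic point, so its stabilizer in $\Gamma$ equals the maximal parabolic $hPh^{-1}$; as every element of $\Gamma_0'$ fixes $x$, this gives $\Gamma_0'\le hPh^{-1}$, and hence $\Gamma'$ is virtually contained in $hPh^{-1}$.

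The substantive point is the final implication: passing from ``$\Gamma_0'$ contains no loxodromic element'' to ``$\Gamma_0'$ lies in a single conjugate of a peripheral subgroup''. This is where I would invoke the facts that an infinite torsion-free subgroup contains an infinite-order element, that such an element is here parabolic with a unique fixed point, and that stabilizers of bounded parabolic points are exactly the conjugates of peripheral subgroups (Bowditch). The other ingredients — invariance and finite-index stability of limit sets, and the limit set of a maximal parabolic subgroup being its parabolic fixed point — are routine facts about convergence actions that I would cite rather than reprove.
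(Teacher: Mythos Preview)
Your proof is correct and follows the same approach as the paper's (very terse) argument: the paper simply notes that peripheral subgroups have singleton limit set, and conversely that a subgroup with singleton limit set cannot contain any infinite-order non-peripheral elements. You have spelled out the implicit step from ``no loxodromics'' to ``virtually contained in a peripheral conjugate'' via the virtual torsion-freeness hypothesis and the identification of stabilizers of bounded parabolic points, which the paper leaves to the reader.
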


\begin{proposition}[{\cite[\S8]{ZZ1}}] \label{prop:relA peripherals are v nilpotent}
Given $\rho\colon (\Gamma,\peripherals) \to \SL_d(\Rb)$ a relatively Anosov representation, and $P \in \peripherals$, then $P$ is virtually nilpotent.\end{proposition}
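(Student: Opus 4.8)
The plan is to prove that each peripheral subgroup $P \in \peripherals$ is virtually nilpotent by exploiting the constraints imposed on $\rho(P)$ by the limit maps of the relatively Anosov representation. Since $P$ fixes a single point $p \in \partial(\Gamma,\peripherals)$ (it is a maximal parabolic subgroup of the geometrically finite convergence action, by Bowditch's theorem quoted above), the equivariant limit maps $\xi_\rho^k$ and $\xi_\rho^{d-k}$ force $\rho(P)$ to fix the flag $(\xi_\rho^k(p), \xi_\rho^{d-k}(p))$. First I would record this, together with transversality, to pin down the action of $\rho(P)$ on $\Rb^d$: writing $V = \xi_\rho^k(p)$ and $W = \xi_\rho^{d-k}(p)$, we have $V \subset W$ by compatibility, $\Rb^d = V \oplus W'$ for $W' = \xi_\rho^{d-k}(y)$ with any $y \neq p$, and $\rho(P)$ preserves $V$ and $W$.

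The key analytic input is that $\rho|_P$ cannot have any element acting with an eigenvalue gap of the relevant type that would make it proximal/biproximal in the wrong way. More precisely, I would argue that $\rho(P)$ has \emph{bounded} (in fact controlled) behavior on the relevant singular values: by the strongly-dynamics-preserving property (ii) applied to sequences in $P$ converging to $p$ from both sides, every infinite-order element of $P$ would be loxodromic in the convergence action if it had two distinct fixed points — but elements of $P$ fix $p$ and, being parabolic, cannot be loxodromic. Hence no element of $P$ maps under $\rho$ to a biproximal (loxodromic-type) element with an attracting fixed flag distinct from $(V,W)$; combined with property (iv), this says $\rho(P)$ contains no $k$-proximal elements at all except those fixing the flag at $p$. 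The standard consequence — this is the content of \cite[\S8]{ZZ1} — is that the restriction of $\rho$ to $P$, after projecting to $\GL(V)$ and to the induced action on $W/V$ and $\Rb^d/W$, has all eigenvalues of modulus $1$ on the "middle" piece and tightly controlled growth, so that $\rho(P)$ is (virtually) contained in a unipotent-by-compact subgroup of the parabolic stabilizer of the flag. A discrete subgroup of such a group is virtually nilpotent, e.g.\ by the Tits alternative combined with the fact that it cannot contain a free group (a nonabelian free group would produce a loxodromic element in the convergence action, contradicting parabolicity) — or more directly, a finitely generated subgroup of a unipotent-by-compact group is virtually nilpotent.

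Concretely, the steps in order are: (1) identify $p$ as the unique fixed point of $P$ in $\partial(\Gamma,\peripherals)$ and deduce $\rho(P)$ stabilizes the flag $(V,W)$; (2) show $\rho(P)$ contains no element which is $k$-proximal with attracting flag $\neq (V,W)$, using the dynamics-preserving properties together with the fact that parabolic elements are not loxodromic; (3) conclude that $\rho(P)$ is virtually contained in a closed subgroup of the parabolic $\mathsf{Stab}(V,W)$ which is compact-by-unipotent, invoking the structure theory of such stabilizers and the absence of proximal dynamics (this is where I would cite or reprove the relevant lemma from \cite[\S8]{ZZ1}); (4) observe that a finitely generated — hence, since $P$ is finitely generated being peripheral, this applies — subgroup of a compact-by-unipotent Lie group is virtually nilpotent. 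The main obstacle is step (3): translating "no proximal dynamics" into an actual statement about the Jordan/Levi structure of $\rho(P)$ requires care, because one must handle the possibility that $\rho(P)$ acts with nontrivial semisimple part of modulus-$1$ eigenvalues (a priori allowed), and argue this part is precompact; this uses that $P$ acts as a parabolic group with a single fixed point and hence has "balanced" dynamics, combined with a Jordan-decomposition argument showing any noncompact semisimple direction would again manufacture a loxodromic element or destroy transversality of the limit maps along a sequence. Since the excerpt explicitly allows citing \cite[\S8]{ZZ1}, I would lean on that and keep the reproof brief, emphasizing the convergence-group obstruction to free subgroups and to loxodromic elements as the conceptual heart.
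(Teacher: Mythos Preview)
The paper does not prove this proposition; it is simply recorded as a citation to \cite[\S8]{ZZ1}. Your sketch is a reasonable outline of an argument along those lines: $P$ fixes a unique boundary point $p$, so $\rho(P)$ fixes the flag $(\xi_\rho^k(p),\xi_\rho^{d-k}(p))$; since $\gamma^n\to p$ and $\gamma^{-n}\to p$ for every infinite-order $\gamma\in P$, the strongly-dynamics-preserving property forces all eigenvalues of $\rho(\gamma)$ to have equal modulus (this is \cite[Prop.\ 4.2]{ZZ1}, also invoked later in the present paper); hence $\rho(P)$ lies in a subgroup of the flag stabilizer of the form $K\ltimes U$ with $K$ compact and $U$ unipotent, and a finitely generated subgroup of such a group has polynomial growth and is virtually nilpotent by Gromov's theorem.

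Two minor points of imprecision in your write-up: you use ``unipotent-by-compact'' and ``compact-by-unipotent'' interchangeably, but they are not the same---the relevant structure here has normal unipotent part and compact quotient; and you should say a word about passing from $\rho(P)$ virtually nilpotent back to $P$ virtually nilpotent, which requires that $\rho$ has finite kernel (a standard consequence of the relatively Anosov condition) together with Proposition~\ref{prop:nilp by finite is nilp}.
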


Similar to the case of Anosov representations of hyperbolic groups, images of relatively Anosov representations also contain Anosov free groups as subgroups.

\begin{lemma}\label{free} Let $(\Gamma,\peripherals)$ be a finitely generated relatively hyperbolic group, and suppose that $\rho\colon (\Gamma,\peripherals) \to \GL_d(\Rb)$ is a relatively $k$-Anosov representation and $\Gamma_0$ is a non-elementary subgroup of $\Gamma$. Then there exist $a,b\in \Gamma_0$ such that $\big \langle \rho(a),\rho(b) \big \rangle$ is a $k$-Anosov, rank 2 free subgroup of $\SL_d(\Rb)$.

\begin{proof} By passing to exterior powers we may assume that $k=1$, see \cite[Def.\ 10.2]{reldomreps} and \cite[Ex.\ 13.2]{ZZ1}). Since $\Gamma_0$ is non-elementary there exists $\gamma \in \Gamma$ such that $\rho(\gamma)$ is $1$-biproximal (see \cite[Prop.\ 4.6]{reldomreps} and \cite[Prop.\ 4.2]{ZZ1}), and $\xi_{\rho}^1(\gamma^{+})$ and $\xi_{\rho}^{d-1}(\gamma^{+})$ (resp. $\xi_{\rho}^1(\gamma^{-})$ and $\xi_{\rho}^{d-1}(\gamma^{-})$) are the attracting fixed points of $\rho(\gamma)$ (resp. $\rho(\gamma^{-1})$) in $\proj(\Rb^d)$ and $\mathsf{Gr}_{d-1}(\Rb^d)$ respectively. Now since $\Lambda_{\Gamma_0}$ is infinite we may find $\delta \in \Gamma_0$ such that $\{\delta \gamma^{+},\delta \gamma^{-}\} \cap \{\gamma^{+},\gamma^{-}\}$ is empty. Since, up to subsequence, $\lim_{n} \gamma^{\pm n}\delta^{\pm 1}\gamma^{+}=\lim_n \gamma^{\pm n}\delta^{\pm 1}\gamma^{-}=\gamma^{\pm}$, we have $$\rho(\delta^{\pm 1})\xi_{\rho}^1(\gamma^{+}), \rho(\delta^{\pm 1})\xi_{\rho}^1(\gamma^{-}) \in \mathbb{P}(\mathbb{R}^d)\smallsetminus \left( \proj(\xi_{\rho}^{d-1}(\gamma^{+}))\cup \proj(\xi_{\rho}^{d-1}(\gamma^{-})) \right).$$
This shows that the set $\big \{\rho(\gamma),\rho(\gamma^{-1}), \rho(\delta \gamma \delta^{-1}),\rho(\delta \gamma^{-1}\delta^{-1})\big \}$ is well positioned in the sense of \cite[App.\ A]{CLS}. Then by a result of Kapovich--Leeb--Porti \cite[Thm.\ 7.40]{KLP} and Canary--Lee--Stover--Sambarino \cite[Thm.\ A2]{CLS}, there exists $N>1$ such that the group $\big \langle \rho(\gamma^N), \rho(\delta \gamma^N\delta^{-1})\big\rangle$ is a free and $1$-Anosov subgroup of $\SL_d(\Rb)$. \end{proof}
\end{lemma}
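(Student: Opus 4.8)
The plan is to reduce to the case $k=1$ and then extract the required free subgroup by a ping-pong argument built from the dynamics of the limit maps, closing with the standard fact that a group generated by sufficiently high powers of a well-positioned system of biproximal elements is free and Anosov. The reduction is routine: the $k$-th exterior power $\wedge^k\rho\colon(\Gamma,\peripherals)\to\SL(\wedge^k\Rb^d)$ is relatively $1$-Anosov, with limit maps induced from those of $\rho$ via the Plücker embedding (see \cite[Def.\ 10.2]{reldomreps} and \cite[Ex.\ 13.2]{ZZ1}); conversely, a rank-$2$ free, $1$-Anosov subgroup $\langle\wedge^k\rho(a),\wedge^k\rho(b)\rangle$ of $\SL(\wedge^k\Rb^d)$ pulls back to a rank-$2$ free, $k$-Anosov subgroup $\langle\rho(a),\rho(b)\rangle$ of $\SL_d(\Rb)$, so we may assume $k=1$.

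Next I would locate two biproximal elements in general position. Since $\Gamma_0$ is a non-elementary subgroup of the convergence group acting on $\partial(\Gamma,\peripherals)$, its limit set $\Lambda_{\Gamma_0}$ is infinite, perfect, and contains loxodromic elements by \cite[Thm.\ 2S + 2T]{Tukia}. Fix such a loxodromic $\gamma\in\Gamma_0$ with fixed points $\gamma^\pm\in\partial(\Gamma,\peripherals)$; by property (iv) of the limit maps, $\rho(\gamma)$ is biproximal with attracting point $\xi_\rho^1(\gamma^+)\in\proj(\Rb^d)$ and repelling hyperplane $\xi_\rho^{d-1}(\gamma^-)$, and symmetrically for $\rho(\gamma^{-1})$. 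As $\Lambda_{\Gamma_0}$ is infinite, choose $\delta\in\Gamma_0$ with $\{\delta\gamma^+,\delta\gamma^-\}\cap\{\gamma^+,\gamma^-\}=\varnothing$, so that $\delta\gamma\delta^{-1}$ is a second loxodromic element whose attracting/repelling boundary points are disjoint from those of $\gamma$.

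The core step is to verify that the four elements $\rho(\gamma^{\pm1})$ and $\rho(\delta\gamma^{\pm1}\delta^{-1})$ form a well-positioned system in the sense of \cite[App.\ A]{CLS}, i.e.\ the attracting point of each in $\proj(\Rb^d)$ misses the repelling hyperplanes of the others. Transversality of the limit maps (property (i)) handles the incidences arising from the four distinct boundary points $\gamma^\pm$ and $\delta\gamma^\pm$; to place $\rho(\delta^{\pm1})\xi_\rho^1(\gamma^\pm)$ generically with respect to $\xi_\rho^{d-1}(\gamma^\pm)$, one uses the strong dynamics-preserving property (property (ii)) along the sequences $\gamma^{\pm n}\delta^{\pm1}\gamma^\pm\to\gamma^\pm$. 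Granted this, the results of Kapovich–Leeb–Porti \cite[Thm.\ 7.40]{KLP} and Canary–Lee–Stover–Sambarino \cite[Thm.\ A2]{CLS} furnish an integer $N>1$ for which $\langle\rho(\gamma^N),\rho(\delta\gamma^N\delta^{-1})\rangle$ is free of rank $2$ and $1$-Anosov; undoing the exterior-power reduction finishes the proof.

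The step I expect to be most delicate is the genericity verification in the previous paragraph. In the relative, as opposed to the purely hyperbolic, setting one must be careful that the strongly dynamics-preserving hypothesis of Definition~\ref{definition-relAnosov} actually applies to the sequences in question — which it does precisely because $\gamma^{\pm n}\delta^{\pm1}$ escapes to $\gamma^\pm$ conically rather than into a peripheral horoball — so that $\rho(\delta^{\pm1})\xi_\rho^1(\gamma^\pm)$ is indeed transverse to $\xi_\rho^{d-1}(\gamma^\pm)$. Once that transversality is secured, the ping-pong black boxes of \cite{KLP} and \cite{CLS} apply essentially verbatim.
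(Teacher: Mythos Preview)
Your proposal is correct and follows essentially the same approach as the paper: reduce to $k=1$ via exterior powers, find a loxodromic $\gamma\in\Gamma_0$ and a $\delta\in\Gamma_0$ displacing its fixed pair, verify that $\{\rho(\gamma^{\pm1}),\rho(\delta\gamma^{\pm1}\delta^{-1})\}$ is well-positioned, and invoke the ping-pong results of \cite{KLP} and \cite{CLS}. Your worry in the final paragraph is unnecessary: since $\rho(\delta^{\pm1})\xi_\rho^1(\gamma^{\pm})=\xi_\rho^1(\delta^{\pm1}\gamma^{\pm})$ by equivariance and the four boundary points $\gamma^{\pm},\delta\gamma^{\pm}$ are distinct, the required genericity already follows from transversality (property~(i)) alone, and the paper's convergence argument likewise only uses continuity of $\xi_\rho^1$ and the north--south dynamics of the biproximal element $\rho(\gamma)$, not any conical-limit-point hypothesis.
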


\subsection{Relatively Anosov subgroups preserving domains}

The following result was established for hyperbolic groups with 1-Anosov representations in \cite[Prop.\ 5.3]{IZ} (see also \cite[Thm.\ 3.1]{Zimmer} and \cite[Prop.\ 2.8]{CanaryTsouvalas}), and also holds in the relative case with essentially the same proof.
\begin{proposition}\label{prop:rel Anosov preserving domain}
Let $(\Gamma,\peripherals)$ is a relatively hyperbolic group such that $\partial(\Gamma,\peripherals)$ is connected. Suppose $\rho\colon (\Gamma,\peripherals) \to
\PGL_d(\Rb)$ is a relatively 1-Anosov representation, and 
$$ \left( \xi_{\rho}^1, (\xi_\rho^1)^* \right) \colon \partial(\Gamma,\peripherals) \to \proj(\Rb^d) \times \proj((\Rb^d)^*)$$
are the associated limit maps.

\begin{enumerate}
\item If $\partial(\Gamma,\peripherals)$ is not homeomorphic to $S^1$, then $\xi_{\rho}^1(\partial(\Gamma,\peripherals))$ is bounded in some affine chart of $\proj(\Rb^d)$.

\item If $\xi_{\rho}^1(\partial(\Gamma,\peripherals))$ is bounded in some affine chart of $\proj(\Rb^d)$, then $\rho(\Gamma)$ preserves a properly convex domain in $\proj(\mathrm{span}(\xi_{\rho}^1(\partial(\Gamma,\peripherals)))$.

\item If $\xi_{\rho}^1(\partial(\Gamma,\peripherals))$ is bounded in some affine chart of $\proj(\Rb^d)$, then $\rho(\Gamma)$ preserves a properly convex domain in $\proj(\Rb^d)$.
\end{enumerate}
\end{proposition}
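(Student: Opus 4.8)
## Proof proposal for Proposition \ref{prop:rel Anosov preserving domain}

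The plan is to establish the three statements in order, since (2) builds on (1) and (3) is essentially a restatement of (2) after enlarging the ambient space. Throughout, write $M = \partial(\Gamma,\peripherals)$, $\xi = \xi_\rho^1$, and $\xi^* = (\xi_\rho^1)^*$, and recall that $\xi$ and $\xi^*$ are continuous, $\rho$-equivariant, injective (as limit maps of a relatively Anosov representation), and transverse, meaning $\xi(x) \not\subset \ker \xi^*(y)$ for all $x \neq y$, and compatible, meaning $\xi(x) \subset \ker\xi^*(x)$ fails — actually for $1$-Anosov compatibility forces $\xi(x) \in \proj(\ker \xi^*(x))$, so $\xi(x) \subset H_x := \ker\xi^*(x)$, the supporting hyperplane at $\xi(x)$.

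For statement (1), the key input is the topological characterization of the circle from Proposition \ref{prop:not circle implies 2-eps connected}. Since $M$ is compact, connected, and not homeomorphic to $S^1$, there exist distinct $x_0, y_0 \in M$ with $M \smallsetminus \{x_0, y_0\}$ connected. First I would fix the hyperplane $H_{y_0} = \ker \xi^*(y_0)$ and consider the affine chart $\Ac = \proj(\Rb^d) \smallsetminus \proj(H_{y_0})$. By transversality, $\xi(x) \notin \proj(H_{y_0})$ for every $x \neq y_0$, so $\xi(M \smallsetminus \{y_0\}) \subset \Ac$. The issue is the single point $\xi(y_0)$, and more seriously, boundedness: $\Ac$ is an affine space but $\xi(M \smallsetminus \{y_0\})$ need not be bounded in it a priori. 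To get boundedness I would use the second marked point: consider instead the pair of hyperplanes $H_{x_0}$, $H_{y_0}$. Their union, removed from $\proj(\Rb^d)$, together with the connectedness of $M \smallsetminus \{x_0,y_0\}$ and a lens/convexity argument — or rather, I would argue as in \cite[Prop.\ 5.3]{IZ}: the connected set $\xi(M \smallsetminus \{x_0, y_0\})$ lies in the complement of $\proj(H_{x_0}) \cup \proj(H_{y_0})$, which has two connected components each of which is a bounded convex subset of a suitable affine chart (it is a "bihedron"); $\xi(M \smallsetminus\{x_0,y_0\})$ lies in one component, hence is bounded in an affine chart, and then $\xi(M)$ is bounded in a slightly larger affine chart by taking closures. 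The precise bookkeeping of which affine chart works is the fiddly part, but it is routine projective geometry.

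For statement (2): assume $\xi(M)$ is bounded in an affine chart $\Ac = \proj(\Rb^d) \smallsetminus \proj(H)$, and let $W = \Spanset(\xi(M)) \subseteq \Rb^d$, a $\rho(\Gamma)$-invariant subspace by equivariance. Working inside $\proj(W)$, I would take $C$ to be the convex hull (in the affine chart $\Ac \cap \proj(W)$) of $\xi(M)$; this is a compact convex set, and since $\xi(M)$ spans $W$ and $\xi$ is injective with $|M| \geq 3$, $C$ has nonempty interior $\Omega := \interior_{\proj(W)}(C)$ in $\proj(W)$. This $\Omega$ is properly convex (it is bounded in an affine chart) and $\rho(\Gamma)$-invariant because $\rho(\Gamma)$ permutes $\xi(M)$ and acts by projective transformations preserving the affine chart up to the fact that one must check $\rho(\gamma)$ sends $\Ac \cap \proj(W)$ into itself — this holds because $\rho(\gamma) H = H_{\gamma^{-1}\cdot(\text{pt})}$ misses... actually the cleanest route is: $C$ is intrinsically characterized as the convex hull of the compact set $\xi(M)$ in projective space in the sense that it is the smallest convex set containing it and not containing a full projective line; since $\rho(\gamma)$ is a projective automorphism carrying $\xi(M)$ to itself, it carries this intrinsic convex hull to itself, hence preserves $\Omega$. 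Statement (3) is then immediate: $\Omega \subset \proj(W) \subset \proj(\Rb^d)$ is still properly convex as a subset of $\proj(\Rb^d)$ (proper convexity is inherited by intersection with projective subspaces, or rather a properly convex subset of a projective subspace is properly convex in the ambient space), and it is $\rho(\Gamma)$-invariant, so we are done — the only subtlety is that $\Omega$ may not be open in $\proj(\Rb^d)$, but "properly convex domain" here should be understood as the relative interior, which is what Proposition \ref{prop:rel Anosov preserving domain}(3) asserts, consistent with usage elsewhere in the paper; alternatively one thickens $\Omega$ inside $\proj(\Rb^d)$ to an open properly convex set, which is possible since $\Omega$ is contained in an affine chart of $\proj(\Rb^d)$.

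I expect the main obstacle to be statement (1), specifically pinning down that a single affine chart contains \emph{all} of $\xi(M)$ in a bounded way — handling the marked points $x_0, y_0$ whose images $\xi(x_0), \xi(y_0)$ lie on the boundary hyperplanes and verifying they do not escape to infinity. The resolution is that $\xi(x_0)$ and $\xi(y_0)$ lie in the closure of the bounded connected set $\xi(M \smallsetminus \{x_0,y_0\})$ by continuity of $\xi$ and connectedness/density, so after choosing the affine chart to be the complement of a hyperplane disjoint from this closure (which exists since the closure is compact and contained in an open bihedron), all of $\xi(M)$ is bounded. Everything else — equivariance, proper convexity, the convex-hull construction — follows formally once (1) is in place, exactly paralleling \cite[Prop.\ 5.3]{IZ} and \cite[Prop.\ 2.8]{CanaryTsouvalas}, with the Bowditch boundary playing the role of the Gromov boundary and no new difficulty introduced by the relative setting.
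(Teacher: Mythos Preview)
Your plan for (1) and (2) follows the same references the paper cites (\cite[Prop.\ 5.3(1)]{IZ} and \cite[Prop.\ 2.8]{CanaryTsouvalas}). One caution: your claim that each connected component of $\proj(\Rb^d)\smallsetminus\big(\proj(H_{x_0})\cup\proj(H_{y_0})\big)$ is bounded in a suitable affine chart is false for $d\geq 3$ --- each such component is an open half-space in an affine chart, and its closure contains full projective lines --- so the ``fiddly'' step you defer is not quite as you describe; the actual mechanism is to tilt one of the two limit hyperplanes slightly off $\xi(M)$, which works precisely because $M\smallsetminus\{x_0,y_0\}$ is connected.

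The genuine gap is in (3). The domain required there must be \emph{open in $\proj(\Rb^d)$} --- this is how it is used in Propositions~\ref{prop:relAnosov implies DGF} and~\ref{prop:SL4R 3mfld grp} and Theorem~\ref{thm:high cd} --- so when $W=\mathrm{span}(\xi(M))\subsetneq\Rb^d$ the domain $\Omega\subset\proj(W)$ from (2) does not suffice, and an arbitrary thickening of $\Omega$ inside $\proj(\Rb^d)$ has no reason to be $\rho(\Gamma)$-invariant, since nothing controls the $\rho(\Gamma)$-action in directions transverse to $W$. Your closing claim that there is ``no new difficulty introduced by the relative setting'' is precisely what the paper flags as wrong: the argument in \cite{IZ} for the full-dimensional domain relies on Swarup's theorem that connected Gromov boundaries have no cut points, which fails for Bowditch boundaries (parabolic fixed points are cut points). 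The paper's repair is to reuse the non-cut pair $\{x,y\}$ from Proposition~\ref{prop:not circle implies 2-eps connected}: one shows directly that $\Omega_0$ avoids $\ker\xi^*(x)$ and $\ker\xi^*(y)$, and then $\rho(\Gamma)$-invariance of $\Omega_0$, minimality of the $\Gamma$-action on $\partial(\Gamma,\peripherals)$, and continuity of $\xi^*$ together force $\Omega_0\cap\ker\xi^*(z)=\varnothing$ for \emph{every} $z\in\partial(\Gamma,\peripherals)$. This is what makes the bounded-orbit lemma \cite[Lem.\ 5.13]{IZ} applicable, producing an open neighborhood $U\subset\proj(\Rb^d)$ of a point of $\Omega_0$ with $\rho(\Gamma)\cdot U$ bounded; the convex hull of $\xi(M)\cup\rho(\Gamma)\cdot U$ then gives the full-dimensional invariant domain.
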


\begin{proof}
The proof of (1) follows that of \cite[Prop.\ 5.3(1)]{IZ}.

The proof of (2) follows the arguments in \cite{Zimmer} or \cite[Prop.\ 2.8]{CanaryTsouvalas}.

For the proof of (3), we modify the proof of \cite[Prop.\ 5.3(2)]{IZ}, noting that the result of \href{https://www.ams.org/journals/era/1996-02-02/S1079-6762-96-00013-3/S1079-6762-96-00013-3.pdf}{Swarup} used in that argument --- if a hyperbolic group has connected Gromov boundary $\partial_\infty\Gamma$, then $\partial_\infty\Gamma$ has no (global) cut-points --- is no longer true in the relative case.
Instead, we use Proposition \ref{prop:not circle implies 2-eps connected}, which gives a non-cut pair $\{x,y\}$. 

Fix an affine chart $\mathbb{A}$ containing $\xi_{\rho}^1(\partial(\Gamma,\peripherals))$. 
Working in $\proj(\mathrm{span}(\xi_{\rho}^1(\partial(\Gamma,\peripherals)))$, and arguing as in \cite[\S3.1]{Zimmer}, we obtain a $\Gamma$-invariant properly convex domain $\Omega_0 \subset \proj(\mathrm{span}(\xi_{\rho}^1(\partial(\Gamma,\peripherals)))$ that $\Omega_0$ avoids $\ker ((\xi_{\rho}^1)^{*}(x))$ and $\ker((\xi_{\rho}^1)^{*}(y))$. Since $\Omega_0$ is $\rho(\Gamma)$-invariant, it also avoids all of the $\rho(\Gamma)$-translates of $\ker((\xi_{\rho}^1)^*(x))$ and $\ker((\xi_{\rho}^1)^*(y))$. By minimality of the $\Gamma$-action on $\partial(\Gamma,\peripherals)$ and continuity and equivariance of the limit maps, we conclude that $\Omega_0 \cap \ker \xi_\rho^*(z) = \varnothing$ for all $z \in \partial(\Gamma,\peripherals)$. 

Then take a bounded neighborhood $N \supset \Omega_0$ in our affine chart $\mathbb{A}$. Arguing as in \cite[Lem.\ 5.13]{IZ}, given any $p \in \Omega_0$ we may find a neighborhood $U \subset \mathbb{A}$ of $p$ whose $\rho(\Gamma)$-orbit is contained in $N$. Then we may take our convex domain to be the (relative interior of the) convex hull of $\xi_{\rho}^1(\partial(\Gamma,\peripherals)) \cup \rho(\Gamma) \cdot N$.
\end{proof}

\section{Proofs of Theorem \ref{thm:SL3R'} and Theorem \ref{symplectic}}

Fix $(\Gamma,\peripherals)$ a finitely-generated non-elementary relatively hyperbolic group.

\subsection{Convergence groups and limit maps}

We start with the short observation that representations of convergence groups admitting non-constant limit maps are necessarily discrete and faithful.

\begin{observation}\label{obs1} Let $X$ be a compact metrizable space and $\Gamma$ be a finitely-generated group acting (minimally) as a non-elementary convergence group on $X$. Suppose $\rho\colon \Gamma \to \GL_d(\Rb)$ is a representation which admits a non-constant continuous $\rho$-equivariant map $\xi\colon X\to \proj(\Rb^d)$. 

Then the representation $\hat{\rho}\colon \Gamma \to \SL_d^{\pm}(\Rb)$ defined by $\hat{\rho}(\gamma) = \left|\det(\rho(\gamma))\right|^{-\frac{1}{d}} \rho(\gamma)$ has finite kernel and discrete image. In particular, $\rho$ has discrete image and finite kernel.

\begin{proof} Note that the map $\xi$ is also $\hat{\rho}$-equivariant. If $\hat{\rho}$ is not discrete or has infinite kernel, then there exists an infinite sequence of elements $(\gamma_n)_{n\in \mathbb{N}}$ of $\Gamma$ such that $\lim_n  \hat{\rho}(\gamma_n)=\textup{I}_d$. Note that up to passing to a subsequence there exist $\eta_1,\eta_2\in X$ such that $\lim \gamma_n x=\eta_1$ for every $x\in X\smallsetminus \{\eta_2\}$. Then observe that $\xi(x)=\lim_n \rho(\gamma_n)\xi(x)=\lim_n \xi(\gamma _n x)=\xi(\eta_1)$ for every $x\in X\smallsetminus \{\eta_2\}$. In particular, by continuity, and because $X$ is perfect and hence $\eta_2$ is not isolated, $\xi$ is constant and this is a contradiction. It follows that $\hat{\rho}(\Gamma)$ is a discrete subgroup of $\GL_d(\Rb)$ and $\hat{\rho}$ has finite kernel.
\end{proof} \end{observation}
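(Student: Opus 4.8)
The plan is to argue by contradiction, showing that if $\hat\rho$ fails to have finite kernel and discrete image then $\xi$ must be constant. The first step is to extract a useful sequence. If $\ker\hat\rho$ is infinite, choose pairwise distinct $\gamma_n \in \ker\hat\rho$, so that trivially $\hat\rho(\gamma_n) = \mathrm{I}_d \to \mathrm{I}_d$. If instead $\hat\rho(\Gamma)$ is not discrete, then by homogeneity of the ambient group we may assume $\mathrm{I}_d$ is a non-isolated point of $\hat\rho(\Gamma)$, which produces pairwise distinct $\gamma_n \in \Gamma$ with $\hat\rho(\gamma_n) \to \mathrm{I}_d$ in $\SL_d^\pm(\Rb)$. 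In either case we obtain an infinite sequence of distinct elements of $\Gamma$ whose $\hat\rho$-images converge to the identity.

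Next I would feed this sequence into the convergence hypothesis. Since the $\gamma_n$ are distinct, after passing to a subsequence there are points $\eta_1, \eta_2 \in X$ with $\gamma_n|_{X \smallsetminus \{\eta_2\}} \to \eta_1$ locally uniformly. The observation that makes everything fit together is that $\rho(\gamma)$ and $\hat\rho(\gamma)$ differ by a positive scalar, hence induce the same projective transformation on $\proj(\Rb^d)$; thus $\xi$ is also $\hat\rho$-equivariant. For any $x \in X \smallsetminus \{\eta_2\}$, using continuity of $\xi$, equivariance, the convergence $\gamma_n x \to \eta_1$, continuity of the $\SL_d^\pm(\Rb)$-action on $\proj(\Rb^d)$, and $\hat\rho(\gamma_n) \to \mathrm{I}_d$, we get
\[ \xi(\eta_1) = \lim_n \xi(\gamma_n x) = \lim_n \hat\rho(\gamma_n)\,\xi(x) = \xi(x). \]
Hence $\xi$ is constant on $X \smallsetminus \{\eta_2\}$.

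Finally, because $\Gamma$ acts minimally and non-elementarily, $X$ coincides with the limit set $\Lambda_\Gamma$, which is perfect; in particular $\eta_2$ is not isolated, so $X \smallsetminus \{\eta_2\}$ is dense in $X$ and, by continuity, $\xi$ is constant on all of $X$, contradicting the hypothesis that $\xi$ is non-constant. This proves the main assertion. The ``in particular'' statement follows immediately, since $\ker\rho \subseteq \ker\hat\rho$ is finite, and any convergent sequence $\rho(\gamma_n) \to g \in \GL_d(\Rb)$ with $\gamma_n$ distinct would, after normalizing determinants, force $\hat\rho(\gamma_n)$ to accumulate in $\SL_d^\pm(\Rb)$, contradicting discreteness of $\hat\rho(\Gamma)$. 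I expect the only slightly delicate point to be the first step --- producing an honest sequence of distinct group elements to which the convergence axiom applies, handling the ``infinite kernel'' and ``non-discrete image'' cases uniformly --- after which the rest is a short continuity argument.
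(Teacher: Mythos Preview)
Your proposal is correct and follows essentially the same approach as the paper: extract a sequence of distinct $\gamma_n$ with $\hat\rho(\gamma_n)\to\mathrm{I}_d$, apply the convergence group property to get the collapsing dynamics, and use equivariance and continuity to force $\xi$ constant off a single point, then everywhere since $X$ is perfect. Your write-up is in fact a bit more careful than the paper's in separating the infinite-kernel and non-discrete cases, in explaining why $\xi$ is $\hat\rho$-equivariant, and in spelling out the ``in particular'' clause.
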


\subsection{The \texorpdfstring{$\SL_3(\Rb)$}{SL(3,R)} case} For the proof of Theorem \ref{thm:SL3R'} we prove first the following theorem for relatively Anosov representations into $\mathsf{SL}_3(\mathbb{R})$.

\begin{theorem} \label{thm:SL3R} Suppose that $\rho\colon (\Gamma,\mathcal{P}) \to \SL_3(\Rb)$ is a relatively $1$-Anosov representation. Let $\Gamma_0$ be a finitely generated subgroup of $\Gamma$ whose limit set $\Lambda_{\Gamma_0}$ in the Bowditch boundary $\partial(\Gamma, \peripherals)$ is connected. Then one of the following holds:\\
\noindent \textup{(i)} $\Gamma_0$ is a virtually nilpotent group and $\Lambda_{\Gamma_0}$ is a singleton, or\\
\noindent \textup{(ii)} $\Gamma_0$ is virtually a free group, or\\
\noindent \textup{(iii)} $\partial(\Gamma,\peripherals)=\Lambda_{\Gamma_0}\cong S^1$, $\Gamma$ is virtually a surface group and $\Gamma_0$ has finite index in $\Gamma$.

\begin{proof} 
{\bf We first deal with the case where $\rho$ is a faithful representation.}

If $\xi_{\rho}^1\colon\partial(\Gamma,\mathcal{P})\rightarrow \mathbb{P}(\mathbb{R}^3)$ restricted to $\Lambda_{\Gamma_0}\subset \partial(\Gamma,\peripherals)$ is constant, then $\Gamma_0$ has to be a virtually nilpotent group by Proposition \ref{prop:peripheral unique flags} and Proposition \ref{prop:relA peripherals are v nilpotent}. Suppose this is not the case, so that $\dim V_0 \geq 2$ where $V_0:=\textup{span}(\xi_{\rho}^1(\Lambda_{\Gamma_0}))$.
Let us assume that $\Gamma_0$ is not virtually a free group; we will prove that (iii) holds. 

{\bf We first claim that $\Lambda_{\Gamma_0}=\partial(\Gamma,\peripherals)$.} Suppose that this does not happen and that there is $\eta\in \partial(\Gamma,\peripherals)\smallsetminus \Lambda_{\Gamma_0}$. Then, since $\xi_{\rho}^1$ and $\xi_{\rho}^2$ are transverse, the compact connected set $\xi^1_{\rho}(\Lambda_{\Gamma_0})$ is contained in the affine chart $\mathbb{A}_{\eta}:=\proj(\Rb^3)\smallsetminus \proj(\xi_{\rho}^2(\eta))$. 
By Observation \ref{obs1} the restriction $\rho_{V_0}\colon \Gamma_0 \to \GL(V_0)$ has discrete image and finite kernel. There are two cases to consider: either $\textup{dim}(V_0)=2$, or $\textup{dim}(V_0)=3$.

Suppose $\dim V_0 = 2$. Then $\rho_{V_0}(\Gamma_0)$ is Fuchsian. Since $\Gamma_0$ is assumed to be not virtually free, necessarily $\xi_{\rho}^1(\Lambda_{\Gamma_0})=\proj(V_0)$. However, this is impossible since $\xi_{\rho}^1(\Lambda_{\Gamma_0})$ lies in the affine chart $\mathbb{A}_{\eta}$ of $\proj(\Rb^3)$. 

Now suppose $\dim V_0 = 3$. The convex hull $\mathcal{C}_0$ of the connected compact set $\xi^1(\Lambda_{\Gamma_0})$ in $\mathbb{A}_{\eta}$ has non-empty interior. Since $\Lambda_{\Gamma_0}$ is connected and preserved by $\rho(\Gamma_0)$, it follows that $\rho(\Gamma_0)$ acts properly discontinuously on $\Omega_0:=\textup{int}(\mathcal{C}_0)$. In particular, since $\rho(\Gamma_0)$ is not virtually free, it is virtually a surface group, $\rho(\Gamma_0)$ acts cocompactly on $\Omega_0$ and $\rho|_{\Gamma_0}$ is $1$-Anosov. In fact, $\Gamma_0$ acts minimally on $\Lambda_{\Gamma_0}$ and $\xi_{\rho}^{1}|_{\Lambda_{\Gamma_0}}$ and $\xi_{\rho}^{2}|_{\Lambda_{\Gamma_0}}$ are the Anosov limit maps of $\rho|_{\Gamma_0}$. Moreover, note that $\rho(\Gamma_0)$ is irreducible and $$\Omega_{0}=\proj(\Rb^3) \smallsetminus \bigcup_{x\in \Lambda_{\Gamma_0}}\proj(\xi_{\rho}^2(x))
$$ is contained in $\mathbb{A}_{\eta}$. In other words, we have that $$\proj(\xi_{\rho}^2(\eta))\subset \bigcup_{x\in \Lambda_{\Gamma_0}}\proj(\xi_{\rho}^2(x)) 
$$ which violates transversality, since $\xi_{\rho}^1(\eta)\oplus \xi_{\rho}^2(x)=\Rb^3$ for every $x\in \Lambda_{\Gamma_0}$. Therefore, we have $\Lambda_{\Gamma_0}=\partial(\Gamma,\peripherals)$. 

{\bf We now claim that $\partial(\Gamma,\peripherals)\cong S^1$.} Suppose not. By Propositions~\ref{prop:not circle implies 2-eps connected} and~\ref{prop:rel Anosov preserving domain}(1), we may deduce that $\xi_{\rho}^1(\partial(\Gamma,\peripherals))$ lies in an affine chart $\mathbb{A}\subset \proj(\Rb^3)$. 
Writing $V_0:=\textup{span}(\xi_{\rho}^1(\partial(\Gamma,\peripherals)))=\textup{span}(\xi_{\rho}^1(\Lambda_{\Gamma_0}))$ as before, we cannot have $\dim V_0 = 2$ by the same argument as in the previous step.
Hence $\dim V_0 =3$ and $\rho(\Gamma)$ preserves the properly convex open set $\textup{int}(\mathcal{C}_{\rho})$, where $\mathcal{C}_{\rho}$ is the convex hull of $\xi_{\rho}^1(\partial(\Gamma,\peripherals))$ in $\mathbb{A}$. 
Arguing as in the previous step, $\rho(\Gamma)$ is virtually a surface group 
and $\rho$ is $1$-Anosov. In particular, $\partial(\Gamma,\peripherals)$ identifies with the limit set of $\rho(\Gamma)$ which is a circle. This is a contradiction. 

{\bf It finally follows that if neither (i) or (ii) holds then $\partial(\Gamma, \peripherals) \cong \Lambda_{\Gamma_0} \cong S^1$.} In that case, by Theorem~\ref{thm:convergence groups on S1 are Fuchsian}, both $\Gamma$ and $\Gamma_0$ are virtually surface groups, and $\Gamma_0$ is necessarily a finite-index subgroup of $\Gamma$ (otherwise $\Lambda_{\Gamma_0}$ will be smaller).

{\bf It remains to deal with the case where $\rho$ is non-faithful.} In this case, note that $\ker\rho$ is finite. Then the above arguments all apply for the quotients $\Gamma':=\Gamma / \ker \rho$ and $\Gamma'_0 := \Gamma_0 / \ker \rho$, and show that one of the following holds: either $\Gamma'_0$ is virtually nilpotent or virtually free, or $\Gamma'$ is virtually a surface group. The conclusion then follows by Proposition~\ref{prop:nilp by finite is nilp} and Theorem~\ref{GabDun}. \end{proof}
\end{theorem}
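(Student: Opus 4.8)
The plan is to reduce to the faithful case and then run a case analysis governed by whether the line map $\xi_\rho^1$ is constant on $\Lambda_{\Gamma_0}$, using transversality of the Anosov flags together with the rigidity of discrete convex-projective groups in the plane $\proj(\Rb^3)$. First I would dispose of the kernel: Observation~\ref{obs1} guarantees that once $\xi_\rho^1$ is non-constant on $\Lambda_{\Gamma_0}$, the restriction $\rho|_{\Gamma_0}$ has finite kernel and discrete image, so $\ker\rho$ is finite and I may pass to the quotients at the very end. Assuming $\rho$ faithful, the first dichotomy is whether $\xi_\rho^1|_{\Lambda_{\Gamma_0}}$ is constant. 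If it is, then $\Gamma_0$ must be elementary: a non-elementary convergence subgroup contains a loxodromic $\gamma$, and transversality forces $\xi_\rho^1(\gamma^+)\neq\xi_\rho^1(\gamma^-)$, so a constant $\xi_\rho^1$ would be a contradiction. Hence $\Lambda_{\Gamma_0}$, being connected, is a single point, and Proposition~\ref{prop:peripheral unique flags} places $\Gamma_0$ virtually inside a peripheral subgroup, so Proposition~\ref{prop:relA peripherals are v nilpotent} gives conclusion (i).

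In the remaining case $\xi_\rho^1|_{\Lambda_{\Gamma_0}}$ is non-constant, so $V_0:=\textup{span}(\xi_\rho^1(\Lambda_{\Gamma_0}))$ has dimension $2$ or $3$; I would assume $\Gamma_0$ is not virtually free and aim for (iii). The heart of the argument is the claim that $\Lambda_{\Gamma_0}=\partial(\Gamma,\peripherals)$. Supposing instead there is $\eta\in\partial(\Gamma,\peripherals)\smallsetminus\Lambda_{\Gamma_0}$, transversality forces the connected compactum $\xi_\rho^1(\Lambda_{\Gamma_0})$ into the affine chart $\mathbb{A}_\eta=\proj(\Rb^3)\smallsetminus\proj(\xi_\rho^2(\eta))$, and Observation~\ref{obs1} applied to $\rho_{V_0}\colon\Gamma_0\to\GL(V_0)$ makes this restriction discrete with finite kernel. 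If $\dim V_0=2$, then $\rho_{V_0}(\Gamma_0)$ is Fuchsian on $\proj(V_0)\cong S^1$, and since $\Gamma_0$ is not virtually free its limit set must be all of $\proj(V_0)$, which cannot lie inside an affine chart. If $\dim V_0=3$, the convex hull of $\xi_\rho^1(\Lambda_{\Gamma_0})$ in $\mathbb{A}_\eta$ has non-empty interior $\Omega_0$ on which $\rho(\Gamma_0)$ acts properly discontinuously; the planar convex-projective dichotomy then forces $\rho(\Gamma_0)$ to be a cocompact virtual surface group with $\rho|_{\Gamma_0}$ irreducible and Anosov, whence $\Omega_0=\proj(\Rb^3)\smallsetminus\bigcup_{x\in\Lambda_{\Gamma_0}}\proj(\xi_\rho^2(x))$. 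But $\Omega_0\subset\mathbb{A}_\eta$ then yields $\proj(\xi_\rho^2(\eta))\subset\bigcup_{x}\proj(\xi_\rho^2(x))$, contradicting transversality at $\eta$. Thus $\Lambda_{\Gamma_0}=\partial(\Gamma,\peripherals)$.

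Next I would show $\partial(\Gamma,\peripherals)\cong S^1$. If not, Proposition~\ref{prop:not circle implies 2-eps connected} supplies a non-cut pair and Proposition~\ref{prop:rel Anosov preserving domain}(1) places $\xi_\rho^1(\partial(\Gamma,\peripherals))$ inside an affine chart; rerunning the dimension count rules out $\dim V_0=2$, and in the $\dim V_0=3$ case produces a properly convex $\rho(\Gamma)$-invariant domain, so $\rho(\Gamma)$ is a virtual surface group and $\rho$ is Anosov with circular limit set, contradicting $\partial(\Gamma,\peripherals)\not\cong S^1$. With $\partial(\Gamma,\peripherals)=\Lambda_{\Gamma_0}\cong S^1$ established, Theorem~\ref{thm:convergence groups on S1 are Fuchsian} makes both $\Gamma$ and $\Gamma_0$ virtually surface groups, and $\Gamma_0$ must have finite index since a proper infinite-index subgroup would have strictly smaller limit set; this is (iii). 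For non-faithful $\rho$ I would apply the faithful analysis to $\Gamma'=\Gamma/\ker\rho$ and $\Gamma_0'=\Gamma_0/\ker\rho$ and lift the three conclusions through Proposition~\ref{prop:nilp by finite is nilp} and Theorem~\ref{GabDun}.

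The step I expect to be the main obstacle is the identification $\Omega_0=\proj(\Rb^3)\smallsetminus\bigcup_{x\in\Lambda_{\Gamma_0}}\proj(\xi_\rho^2(x))$ in the $\dim V_0=3$ subcase, since it rests on sharp planar convex-projective rigidity, namely that a discrete group preserving a properly convex domain in $\proj(\Rb^3)$ is either virtually free or a cocompact virtual surface group, together with irreducibility of $\rho(\Gamma_0)$. It is precisely this identification that converts the containment $\Omega_0\subset\mathbb{A}_\eta$ into a transversality violation and thereby forces $\Lambda_{\Gamma_0}$ to exhaust the whole Bowditch boundary.
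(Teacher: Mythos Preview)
Your proposal is correct and follows essentially the same approach as the paper's proof: the same reduction to the faithful case via Observation~\ref{obs1}, the same case split on $\dim V_0$, the same transversality contradiction via the identification $\Omega_0=\proj(\Rb^3)\smallsetminus\bigcup_{x}\proj(\xi_\rho^2(x))$, and the same endgame through Theorem~\ref{thm:convergence groups on S1 are Fuchsian}, Proposition~\ref{prop:nilp by finite is nilp}, and Theorem~\ref{GabDun}. The step you flag as the main obstacle is exactly the one the paper leans on as well, and your diagnosis of what makes it work (cocompactness forcing irreducibility and the Anosov condition on $\rho|_{\Gamma_0}$, so that the dual limit curve sweeps out the full complement of $\Omega_0$) matches the paper's reasoning.
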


This concludes the proof of Theorem \ref{thm:SL3R'} for the case where $\partial(\Gamma,\mathcal{P})$ is connected.

\begin{remark} We repeatedly use the following fact that a subgroup $\mathsf{\Gamma}<\GL_d(\Rb)$ which preserves a connected compact subset $\mathcal{C}$ of an affine chart $\mathbb{A}$ with $\mathbb{R}^d=\textup{span}(\mathcal{C})$, then $\mathsf{\Gamma}$ preserves a properly convex open domain in $\proj(\Rb^d)$, namely the interior of the convex hull of $\mathcal{C}$ in $\mathbb{A}$. 

This fact fails to be true if we drop the assumption that $\mathcal{C}$ is connected. For example, let $\Sigma_r$ be the closed oriented surface of genus $r \geq 2$. If $j\colon \pi_1(\Sigma_r)\xhookrightarrow{} \SL_2(\Rb)$ is a discrete and faithful representation and $H<\pi_1(\Sigma_r)$ is a free subgroup of rank at least two, then the product $(j \times 1)|_{H}$, where $1$ denotes the trivial representation into $\SL_1(\Rb)$, admits Zariski-dense Anosov deformations $j_{\varepsilon}\colon H\to \SL_3(\Rb)$ whose limit set are contained in an affine chart of $\proj(\Rb^3)$. However, $j_{\varepsilon}(H)$ cannot preserve a properly convex domain in $\proj(\Rb^3)$, since $j_{\varepsilon}(H)$ contains proximal elements whose eigenvalue of maximum modulus is negative.
\end{remark}

 \subsection{Anosov representations into symplectic groups}

For a $1$-proximal matrix $g\in \mathsf{GL}_d(\mathbb{R})$ we denote by $\lambda_1(g)\in \mathbb{R}$ its unique eigenvalue of maximum modulus. A proximal matrix $g$ is called {\em postitively $1$-proximal} if $\lambda_1(g)>0$.

For the proof of Theorem \ref{symplectic} we first prove the following result for relatively Anosov representations into $\Sp_{2m}(\mathbb{R})$.

\begin{theorem} \label{2q+1} Suppose that $\rho\colon (\Gamma,\mathcal{P}) \to \Sp_{2m}(\Rb)$ is a relatively $k$-Anosov representation for some $1\leq k \leq m$ odd integer. Let $\Gamma_0$ be a subgroup of $\Gamma$ whose limit set $\Lambda_{\Gamma_0}$ in the Bowditch boundary $\partial(\Gamma, \peripherals)$ is connected. Then one of the following holds:\\
\noindent \textup{(i)} $\Gamma_0$ is a virtually nilpotent group and $\Lambda_{\Gamma_0}$ is a singleton, or\\
\noindent \textup{(ii)} $\Gamma_0$ is virtually free group, or \\ 
\noindent \textup{(iii)} $\partial(\Gamma,\peripherals)=\Lambda_{\Gamma_0}\cong S^1$, $\Gamma$ is virtually a surface group and $\Gamma_0$ has finite index in $\Gamma$.
\end{theorem}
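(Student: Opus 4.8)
The plan is to mimic closely the structure of the proof of Theorem~\ref{thm:SL3R}, replacing the ad hoc dimension count in $\Rb^3$ by a symplectic-geometric argument that forces the relevant span to be the whole of $\Rb^{2m}$. First I would reduce to the case where $\rho$ is faithful exactly as in Theorem~\ref{thm:SL3R}, since $\ker\rho$ is finite by Observation~\ref{obs1} (applied to an exterior power of $\rho$, to get a $1$-Anosov situation) and at the end one invokes Proposition~\ref{prop:nilp by finite is nilp} and Theorem~\ref{GabDun}. Passing to the exterior power $\wedge^k$, I may assume $k=1$, so $\rho$ is relatively $1$-Anosov into $\mathsf{GL}(W)$ with $W=\wedge^k\Rb^{2m}$, and crucially $\rho(\Gamma)$ preserves the nondegenerate form $\omega$ on $W$ (which is symplectic precisely because $k$ is odd; this is where the oddness hypothesis enters). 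Let $\xi=\xi^1_\rho\colon\partial(\Gamma,\peripherals)\to\proj(W)$ be the associated limit curve and, for a subgroup $\Gamma_0$ with connected limit set $\Lambda_{\Gamma_0}$, set $V_0:=\mathrm{span}(\xi(\Lambda_{\Gamma_0}))\subset W$.

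If $\xi$ is constant on $\Lambda_{\Gamma_0}$ then $\Gamma_0$ is virtually contained in a peripheral, hence virtually nilpotent by Propositions~\ref{prop:peripheral unique flags} and~\ref{prop:relA peripherals are v nilpotent}, giving case~(i). Otherwise $\dim V_0\geq 2$, and assuming $\Gamma_0$ is not virtually free I want to show $\Lambda_{\Gamma_0}=\partial(\Gamma,\peripherals)\cong S^1$ and $\Gamma$ is virtually a surface group. As in Theorem~\ref{thm:SL3R} I first argue $\Lambda_{\Gamma_0}=\partial(\Gamma,\peripherals)$: if some $\eta\notin\Lambda_{\Gamma_0}$, transversality puts $\xi(\Lambda_{\Gamma_0})$ in the affine chart complementary to $\proj(\xi^{d-k}_\rho(\eta))$ (read inside $\proj(W)$ via the Plücker-type embedding), and Observation~\ref{obs1} gives that $\rho_{V_0}\colon\Gamma_0\to\mathsf{GL}(V_0)$ is discrete with finite kernel. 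The key new input is to analyze the restriction of $\omega$ to $V_0$: since $\Lambda_{\Gamma_0}$ is preserved by $\Gamma_0$, $V_0$ is $\rho(\Gamma_0)$-invariant, so $V_0^{\perp_\omega}$ and $V_0\cap V_0^{\perp_\omega}$ (the radical) are too, and by minimality of the $\Gamma_0$-action on $\Lambda_{\Gamma_0}$ together with irreducibility-type arguments one shows the radical is trivial, i.e.\ $\omega|_{V_0}$ is nondegenerate, hence $\dim V_0$ is even. Then the convex-cocompactness machinery applies: $\rho(\Gamma_0)$ preserves the properly convex domain $\mathrm{int}(\mathcal{C}_0)$ in $\proj(V_0)$ where $\mathcal{C}_0$ is the convex hull of the connected compact set $\xi(\Lambda_{\Gamma_0})$ in an affine chart; since $\Gamma_0$ is not virtually free it is virtually a surface group acting cocompactly, so $\rho|_{\Gamma_0}$ is $1$-Anosov with $\xi^1_\rho|_{\Lambda_{\Gamma_0}}$ and $\xi^{d-1}_\rho|_{\Lambda_{\Gamma_0}}$ its limit maps; and then $\mathrm{int}(\mathcal{C}_0)=\proj(V_0)\smallsetminus\bigcup_{x\in\Lambda_{\Gamma_0}}\proj(\xi^{d-1}_\rho(x)\cap V_0)$ lies in the affine chart, forcing $\proj(\xi^{d-1}_\rho(\eta))\cap\proj(V_0)\subset\bigcup_x\proj(\xi^{d-1}_\rho(x))$, which contradicts transversality $\xi^1_\rho(\eta)\oplus\xi^{d-1}_\rho(x)=W$. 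Hence $\Lambda_{\Gamma_0}=\partial(\Gamma,\peripherals)$.

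Next, to see $\partial(\Gamma,\peripherals)\cong S^1$: if not, Propositions~\ref{prop:not circle implies 2-eps connected} and~\ref{prop:rel Anosov preserving domain}(1) put the whole limit curve $\xi(\partial(\Gamma,\peripherals))$ in an affine chart $\mathbb{A}$, and with $V_0=\mathrm{span}(\xi(\partial(\Gamma,\peripherals)))$ now $\rho(\Gamma)$-invariant, the same symplectic-nondegeneracy argument shows $\omega|_{V_0}$ is nondegenerate, and $\rho(\Gamma)$ preserves $\mathrm{int}(\mathcal{C}_\rho)$, the convex hull of $\xi(\partial(\Gamma,\peripherals))$ in $\mathbb{A}\cap\proj(V_0)$; as before $\rho(\Gamma)$ is virtually a surface group with $\partial(\Gamma,\peripherals)$ homeomorphic to the (circle) limit set, a contradiction. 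So neither (i) nor (ii) holding forces $\partial(\Gamma,\peripherals)\cong\Lambda_{\Gamma_0}\cong S^1$, whence by Theorem~\ref{thm:convergence groups on S1 are Fuchsian} both $\Gamma$ and $\Gamma_0$ are virtually surface groups, and $\Gamma_0$ has finite index in $\Gamma$ since otherwise $\Lambda_{\Gamma_0}\subsetneq\partial(\Gamma,\peripherals)$; this is case~(iii). Finally the non-faithful case is handled by passing to $\Gamma/\ker\rho$ and $\Gamma_0/\ker\rho$ and invoking Proposition~\ref{prop:nilp by finite is nilp} and Theorem~\ref{GabDun} as in Theorem~\ref{thm:SL3R}.

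The main obstacle I expect is the step showing $\omega|_{V_0}$ is nondegenerate --- equivalently, ruling out that $\xi(\Lambda_{\Gamma_0})$ spans an odd-dimensional or degenerate subspace. In the $\SL_3$ case the analogous difficulty was resolved simply: $\dim V_0\in\{2,3\}$, and $\dim V_0=2$ is killed because a Fuchsian group acting on $\proj(V_0)$ with non-virtually-free image must have $\xi(\Lambda_{\Gamma_0})$ equal to all of $\proj(V_0)$, contradicting containment in an affine chart. Here the corresponding statement is that the radical $R:=V_0\cap V_0^{\perp_\omega}$ is trivial; $R$ is a $\rho(\Gamma_0)$-invariant subspace of $V_0$ on which, by minimality of the action on $\Lambda_{\Gamma_0}$, either $\proj(R)\cap\xi(\Lambda_{\Gamma_0})=\varnothing$ or $\xi(\Lambda_{\Gamma_0})\subset\proj(R)$; the latter forces $R=V_0$, i.e.\ $\omega|_{V_0}\equiv 0$, which together with $\rho(\Gamma_0)\subset\Sp(W)$ and $\dim V_0\geq 2$ should be excluded by a biproximality/eigenvalue argument (a loxodromic element of $\Gamma_0$ has an attracting fixed flag in $\proj(V_0)$ whose interaction with $\omega$ prevents $V_0$ from being totally isotropic of dimension $\geq 2$ once $V_0^{\perp_\omega}\supset V_0$), while the former means $\rho|_{\Gamma_0}$ descends to a representation into the stabilizer of $R$ whose action on $\proj(V_0/R)$ still has $\xi(\Lambda_{\Gamma_0})$ as an embedded connected limit set, and induction on $\dim V_0$ (or directly, the argument splitting $V_0=R\oplus V_0^{\perp_\omega}/\!\cdots$) reduces to the nondegenerate case. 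Making this reduction clean, and confirming that the resulting convex domain in $\proj(V_0)$ genuinely exhibits $\rho(\Gamma_0)$ as convex cocompact (so that one may quote the Benoist/surface-group dichotomy), is the technical heart of the argument.
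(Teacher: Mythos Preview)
Your proposal has a genuine gap at the step ``since $\Gamma_0$ is not virtually free it is virtually a surface group acting cocompactly''. This dichotomy --- discrete, not virtually free, preserving a properly convex domain, hence a cocompact surface group --- is specific to two-dimensional domains, i.e.\ to $\dim V_0\leq 3$, where the automorphism group of $\Omega_0$ is essentially Fuchsian. After passing to $W=\wedge^k\Rb^{2m}$ the span $V_0$ can have arbitrarily large dimension, and a discrete subgroup of $\Aut(\Omega_0)$ for a higher-dimensional properly convex $\Omega_0\subset\proj(V_0)$ need not be virtually free or a cocompact surface group (think of a quasi-Fuchsian surface group acting on the Klein ball in $\proj(\Rb^4)$: not free, not cocompact). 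Without cocompactness, the description $\Omega_0=\proj(V_0)\smallsetminus\bigcup_x\proj(\xi^{d-1}_\rho(x)\cap V_0)$ fails and the transversality contradiction is never reached; the same gap recurs in your argument that $\partial(\Gamma,\peripherals)\cong S^1$. (Incidentally, nondegeneracy of $\omega|_{V_0}$ is easier than you fear: transversality together with $\xi^{2m-1}(y)=\xi^1(y)^{\perp_\omega}$ gives $\omega(\xi^1(x),\xi^1(y))\neq 0$ for $x\neq y$, so $V_0$ is never totally isotropic --- but this does not save the argument.)

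The paper avoids this obstacle by a genuinely different mechanism: positive proximality and deformation. Using Lemma~\ref{free}, one extracts a free rank-two $1$-Anosov subgroup $\langle a,b\rangle<\Gamma_0$. If $\xi^1_\rho(\Lambda_{\Gamma_0})$ lies in an affine chart, then $\rho|_{V_1}(\Gamma_0)$ preserves a properly convex domain in $\proj(V_1)$, so a finite-index $H_0<\langle a,b\rangle$ preserves a properly convex cone in $V_1$ and is therefore positively $1$-proximal by Benoist, giving $\lambda_1(\rho(h))>0$ for all nontrivial $h\in H_0$. By openness of the Anosov condition, $\rho|_{H_0}$ deforms through $1$-Anosov representations to a Zariski-dense $\rho_1\colon H_0\to\Sp_{2m}(\Rb)$; continuity of $\lambda_1$ along this proximal path preserves positivity. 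But by \cite[Cor.~3.5]{Ben}, a Zariski-dense subgroup of $\Sp_{2m}(\Rb)$ cannot preserve a properly convex cone --- contradiction. It is precisely here, in this Benoist obstruction, that the symplectic (rather than orthogonal) nature of the invariant form on $\wedge^k\Rb^{2m}$, and hence the oddness of $k$, is used; this replaces the low-dimensional classification you tried to invoke.
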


\begin{proof}[Proof of Theorem \ref{2q+1}] The strategy of proof is similar to the $\SL_3(\Rb)$ case, although the details in the last step differ. We assume that $\rho$ is faithful, otherwise we use the same reduction as at the end of the proof for the $\SL_3(\Rb)$ case. Since the $k$-exterior power $\wedge^k \colon \Sp_{2m}(\mathbb{R})\rightarrow \mathsf{GL}(\wedge^{k}\mathbb{R}^{2m})$ preserves a symplectic form on $\wedge^{k}\mathbb{R}^{2m}$ and $\wedge^k \rho$ is relatively $1$-Anosov, we may assume that $k=1$. Let $\xi_{\rho}^1,\xi_{\rho}^{2m-1}$ be the limit maps of $\rho$ and suppose $\xi_{\rho}^{1}(\Lambda_{\Gamma_0})$ is not a singleton and $\Gamma_0$ is not virtually a free group; we will prove that (ii) holds.

First, by Lemma \ref{free}, we may find $a,b \in \Gamma$ such that $\rho|_{\langle a,b\rangle}\colon\langle  a,b\rangle \rightarrow \Sp_{2m}(\mathbb{R})$ is $1$-Anosov. Let us prove that $\Lambda_{\Gamma_0}=\partial(\Gamma, \peripherals)$. If not, fix $\eta\in \partial(\Gamma, \peripherals)\smallsetminus \Lambda_{\Gamma_0}$ and by transversality we have that $$\xi^{1}_{\rho}(\Lambda_{\Gamma_0})\subset \proj(\mathbb{R}^{2m})\smallsetminus \proj (\xi_{\rho}^{2m-1}(\eta)).$$ Let $V_1:=\textup{span}(\xi_{\rho}^{1}(\Lambda_{\Gamma_0}))$ and observe that $\xi_{\rho}^{1}(\Lambda_{\Gamma_0}) $ is a connected compact subset of the affine chart $B_{\eta}:=\mathbb{P}(V_1)\smallsetminus \mathbb{P}(V_1)\cap \mathbb{P}(\xi_{\rho}^{2m-1}(\eta))$ of $\proj(V_1)$. In particular, $\rho|_{V_1}(\Gamma_0)$ (and hence $\rho|_{V_1}(\langle a,b\rangle)$) preserves a properly convex domain $\Omega_{0}\subset \proj(V_1)$. We deduce that there exists a finite-index subgroup $H_0 \subset \langle a,b\rangle$ such that $\rho|_{V_1}(H_0)$ preserves a properly convex cone $C_1\subset V_1$ and hence $\rho|_{V_1}(H_0)$ is positively proximal (see \cite{Ben}). 

Since every $h\in H_0\smallsetminus \{1\}$ admits an eigenvector of maximum modulus in $V_1$ (because the limit set of $\rho(H_0)$ is contained in $\proj(V_1)$), we deduce that $\lambda_1 (\rho(h))>0$. Since $\rho|_{H_0}\colon H_0\to \Sp_{2m}(\mathbb{R})$ is $1$-Anosov, by the openness of Anosov representations \cite{GW, Lab}, we may find a continuous path of representations $\left\{\rho_t\colon H_0\to \Sp_{2m}(\Rb) \right\}_{t\in [0,1]}$ such that $\rho_0=\rho|_{H_0}$, $\rho_1$ is Zariski-dense in $\Sp_{2m}(\mathbb{R})$ and $\rho_t$ is $1$-Anosov for every $0\leq t \leq 1$. Now for $h\in H_0\smallsetminus\{1\}$, $\rho_t(h)$ is $1$-proximal, so by continuity we obtain $$\lambda_1 (\rho_0(h))\lambda_1(\rho_1(h))=\lambda_1 (\rho(h))\lambda_1(\rho_1(h))>0.$$ 

We deduce that $\rho_1(H_0)$ is a strongly irreducible subgroup of $\GL_{2m}(\Rb)$ (since it is a Zariski dense subgroup of $\Sp_{2m}(\mathbb{R})$) all of whose non-trivial elements are positively $1$-proximal. Thus, by \cite{Ben}, $\rho_1(H_0)$ preserves a properly convex cone $\mathcal{C}'\subset \Rb^{2m}$. However, this is contradiction by Benoist's work, see \cite[Cor. 3.5]{Ben}. 

Therefore, we conclude that $\partial(\Gamma, \peripherals)=\Lambda_{\Gamma_0}$. In fact, our arguments show that $\xi_{\rho}^{1}(\Lambda_{L}))$ is not bounded in any affine chart of $\proj(\Rb^{2m})$, as soon as $\Lambda_{L}$ is connected and $L$ is not contained in a conjugate of a peripheral subgroup of $\Gamma$.

 It remains to show that $\Lambda_{\Gamma_0}\cong S^1.$ If not, we may use again Propositions~\ref{prop:not circle implies 2-eps connected} and~\ref{prop:rel Anosov preserving domain}(1)
 to deduce that $\xi_{\rho}^{1}(\partial(\Gamma,\peripherals))$ is bounded in an affine chart of $\proj(\Rb^{2m})$. However, this is impossible by the previous arguments. Hence $\partial(\Gamma, \peripherals)\cong S^1$ and (iii) holds. \end{proof}

This completes the proof of Theorem \ref{symplectic} for the case where $\partial(\Gamma,\mathcal{P})$ is connected.

\subsection{The case of the disconnected boundary} \label{sec:disconnected boundary} Now we give the proof of Theorem \ref{thm:SL3R'} and Theorem \ref{(2q+1)'} in the case where $\partial(\Gamma, \mathcal{P})$ is disconnected.

\begin{proof} Fix a finitely-generated relatively hyperbolic group $(\Gamma,\peripherals)$ and assume that $\partial(\Gamma,\peripherals)$ is disconnected. We assume now that there is a representation $\rho \colon (\Gamma,\peripherals) \to \SL_3(\Rb)$ (resp. $\rho \colon (\Gamma,\peripherals) \to \Sp_{2m}(\mathbb{R})$) which is relatively $1$-Anosov (resp. relatively $k$-Anosov for some odd $1\leq k \leq m$).

By Theorem~\ref{thm:splittings from disconnected Bowditch boundary}, there exists a maximal splitting $\Gamma=\pi_1(\mathcal{G})$ of $\Gamma$ relative to $\mathcal{P}$, where $\mathcal{G}$ is a finite graph of groups whose edge groups are finite and whose vertex groups are relatively hyperbolic with connected Bowditch boundary and connected limit set in $\partial(\Gamma, \mathcal{P})$. In this case, Theorem \ref{thm:SL3R} (resp.\ Theorem \ref{2q+1}) implies that all of the vertex subgroups of $\mathcal{G}$ either (i) have singleton boundary, or (ii) are virtually free, or (iii) are virtually surface groups. 

In case (iii), if there is a vertex group which is virtually a surface group, it follows by Theorem \ref{thm:SL3R} (resp. Theorem \ref{2q+1}) that $\partial(\Gamma, \mathcal{P})$ is a circle and $\Gamma$ is virtually a surface group, which cannot happen since we assumed that $\partial(\Gamma, \mathcal{P})$ is disconnected. 

Therefore, each of the vertex groups of $\mathcal{G}$ are either virtually free or have singleton boundary. In case (i), the vertex group must be virtually peripheral and hence virtually nilpotent by Proposition \ref{prop:relA peripherals are v nilpotent}. Therefore, by Proposition \ref{graph-torsionfree}, we conclude that $\Gamma=\pi_1(\mathcal{G})$ is virtually a free product of cyclic groups with finite-index subgroups of the vertex groups of $\mathcal{G}$. In particular, $\Gamma$ is virtually a free product of nilpotent groups.

In the particular case where of a relatively $1$-Anosov representation $\rho\colon (\Gamma, \mathcal{P})\rightarrow \mathsf{SL}_3(\mathbb{R})$, by \cite[Cor.\ 1.5]{DoubaT}, $\Gamma$ is Gromov hyperbolic, every nilpotent vertex group of $\mathcal{G}$ is virtually cyclic, and hence $\Gamma$ is virtually free. Therefore, if $\rho\colon (\Gamma, \peripherals)\to \mathsf{SL}_3(\mathbb{R})$ is relatively Anosov and $\partial(\Gamma,\peripherals)$ is disconnected, then $\Gamma$ is virtually free.\end{proof}

\section{Proof of Theorem~\ref{thm:SL4R}} \label{sec:SL4R low cd}

Again fix $(\Gamma,\peripherals)$ a finitely-generated non-elementary relatively hyperbolic group. 

We start with the case where $\partial(\Gamma,\Pc)$ is connected.

\begin{theorem} \label{thm:SL4R connected}
Suppose $\rho\colon (\Gamma,\peripherals) \to \SL_4(\Rb)$ is a relatively 1-Anosov representation. Let $\Gamma_0$ be a finitely generated relatively hyperbolic subgroup of $\Gamma$ whose limit set $\Lambda_{\Gamma_0}$ in the Bowditch boundary $\partial(\Gamma,\Pc)$ is connected. Then one of the following holds:\\
\noindent \textup{(i)} $\Gamma_0$ is a virtually nilpotent group and $\Lambda_{\Gamma_0}$ is a singleton, or\\
\noindent \textup{(ii)} $\Gamma_0$ is virtually a free group or surface group, or\\
\noindent \textup{(iii)} $\Gamma_0$ is a finite extension of a group which is hyperbolic relative to \textup{(}a possibly empty collection of\textup{)} virtually free abelian subgroups of rank at most 2 and virtually the fundamental group of a compact 3-manifold \textup{(}possibly with boundary\textup{)}.
\end{theorem}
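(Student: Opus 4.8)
The plan is to mirror the proofs of Theorems~\ref{thm:SL3R} and~\ref{2q+1}, bootstrapping into Theorem~\ref{thm:SL3R'} and then using a little $3$-manifold topology. First I would reduce to the case that $\rho$ is faithful: $\ker\rho$ is finite, and all three conclusions (i)--(iii) are stable under passing to finite extensions by Propositions~\ref{prop:nilp by finite is nilp},~\ref{graph-torsionfree} and Theorem~\ref{GabDun}, so nothing is lost. Assuming $\rho$ faithful, if $\xi_\rho^1|_{\Lambda_{\Gamma_0}}$ is constant then $\Lambda_{\Gamma_0}$ is a singleton, so $\Gamma_0$ is virtually peripheral, hence virtually nilpotent by Propositions~\ref{prop:peripheral unique flags} and~\ref{prop:relA peripherals are v nilpotent}: case (i). Otherwise set $V_0 := \mathrm{span}\bigl(\xi_\rho^1(\Lambda_{\Gamma_0})\bigr)$ and assume $\Gamma_0$ is not virtually free or a surface group; I must reach case (iii). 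Since $\Lambda_{\Gamma_0}$ is now a connected space with more than one point it is infinite, so $\Gamma_0$ is non-elementary and Observation~\ref{obs1} applies to $\rho_{V_0}\colon\Gamma_0\to\GL(V_0)$, showing $\rho_{V_0}(\Gamma_0)$ is discrete with finite kernel.

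Next I would rule out $\Lambda_{\Gamma_0}\cong S^1$: otherwise $\Gamma_0$ acts as a convergence group on $S^1$ with limit set all of $S^1$, hence is Fuchsian by Theorem~\ref{thm:convergence groups on S1 are Fuchsian}, hence virtually free or a surface group --- a contradiction. With $\Lambda_{\Gamma_0}$ not a circle, $\xi_\rho^1(\Lambda_{\Gamma_0})$ is bounded in some affine chart $\mathbb{A}$ of $\proj(\Rb^4)$: if $\Lambda_{\Gamma_0}\subsetneq\partial(\Gamma,\peripherals)$ this is immediate from transversality using a point $\eta\in\partial(\Gamma,\peripherals)\smallsetminus\Lambda_{\Gamma_0}$ and the chart $\proj(\Rb^4)\smallsetminus\proj\bigl(\xi_\rho^{3}(\eta)\bigr)$, and if $\Lambda_{\Gamma_0}=\partial(\Gamma,\peripherals)$ it follows from Proposition~\ref{prop:rel Anosov preserving domain}(1). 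Now split on $\dim V_0\in\{2,3,4\}$. If $\dim V_0=2$, then $\rho_{V_0}(\Gamma_0)$ is a discrete subgroup of $\PGL_2(\Rb)$, so $\Gamma_0$ is virtually free or a surface group --- contradiction. If $\dim V_0=3$, I would check directly (using only transversality of $\xi_\rho^1,\xi_\rho^3$ in $\Rb^4$) that $\xi_\rho^{3}(x)\cap V_0$ is always a plane, that $\bigl(\xi_\rho^1|_{\Lambda_{\Gamma_0}},\,x\mapsto\xi_\rho^{3}(x)\cap V_0\bigr)$ is a compatible, transverse, strongly dynamics-preserving pair into $\mathsf{Gr}_1(V_0)\times\mathsf{Gr}_2(V_0)$, hence exhibits $\rho_{V_0}|_{\Gamma_0}$ as a relatively $1$-Anosov representation into $\SL^{\pm}(V_0)\cong\SL_3^{\pm}(\Rb)$; Theorem~\ref{thm:SL3R'} then forces $\Gamma_0$ to be virtually free or a surface group --- contradiction. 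So $\dim V_0=4$.

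In the remaining case $V_0=\Rb^4$: since $\xi_\rho^1(\Lambda_{\Gamma_0})$ is a connected compact subset of $\mathbb{A}$ spanning $\Rb^4$ and is preserved by $\rho(\Gamma_0)$, the group $\rho(\Gamma_0)$ preserves the properly convex domain $\Omega_0$ obtained as the interior of its convex hull in $\mathbb{A}$ (the fact recalled in the Remark above; or Proposition~\ref{prop:rel Anosov preserving domain}), and by Observation~\ref{obs1} this action is properly discontinuous. Since $\Omega_0$ is homeomorphic to an open $3$-ball and $\Gamma_0$ is finitely generated and (via the faithful $\rho$) linear, Selberg's lemma gives a torsion-free finite-index $\Gamma_0'\leq\Gamma_0$ acting freely on $\Omega_0$, so $M:=\Omega_0/\Gamma_0'$ is an aspherical open $3$-manifold with finitely generated fundamental group $\Gamma_0'$; Scott's compact core theorem then realizes $\Gamma_0'$ as $\pi_1$ of a compact $3$-manifold (possibly with boundary). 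For the peripheral structure: each peripheral $P\leq\Gamma_0$ is infinite, of infinite index, and virtually nilpotent (Proposition~\ref{prop:relA peripherals are v nilpotent}), and applying Scott's theorem to $\Omega_0/(P\cap\Gamma_0')$ realizes the torsion-free finitely generated nilpotent group $P\cap\Gamma_0'$ as $\pi_1$ of a compact $3$-manifold. If $\mathrm{cd}(P\cap\Gamma_0')=3$ this manifold is closed (a compact $3$-manifold with nonempty boundary is homotopy equivalent to an aspherical $2$-complex, so has $\mathrm{cd}(\pi_1)\leq 2$), forcing $P\cap\Gamma_0'$ to act cocompactly on $\Omega_0$; but then $\Omega_0/(P\cap\Gamma_0')\to\Omega_0/\Gamma_0'$ would be an infinite-sheeted covering with compact total space, which is impossible. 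Hence $\mathrm{cd}(P\cap\Gamma_0')\leq 2$, so $P\cap\Gamma_0'\cong\Zb$ or $\Zb^2$, and $P$ is virtually $\Zb$ or $\Zb^2$. Thus $\Gamma_0$ is virtually the fundamental group of a compact $3$-manifold and is hyperbolic relative to virtually free abelian subgroups of rank at most $2$: case (iii). Undoing the reduction to faithful $\rho$ then produces the statement with $\Gamma_0$ replaced by a finite extension.

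The main obstacle is the $\dim V_0=4$ step --- converting the properly discontinuous convex projective action on $\Omega_0\subset\proj(\Rb^4)$ into $3$-manifold topology (which is why Scott's core theorem, rather than just asphericity, is needed) and, crucially, pinning the peripheral subgroups down to virtually abelian of rank at most $2$ rather than merely virtually nilpotent, via the cohomological-dimension/cocompactness dichotomy above. A secondary technical point is the hand verification in the $\dim V_0=3$ case that $\bigl(\xi_\rho^1,\,\xi_\rho^{3}(\cdot)\cap V_0\bigr)$ genuinely satisfies the relative Anosov axioms (in particular strong dynamics-preservation, where one must know the convergence dynamics of $\Gamma_0$ on $\Lambda_{\Gamma_0}$ agree with those on $\partial(\Gamma,\peripherals)$) so that Theorem~\ref{thm:SL3R'} can be invoked.
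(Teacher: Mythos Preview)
Your argument is essentially correct and reaches the conclusion, but the route differs from the paper's in two notable ways. First, the paper does not case on $\dim V_0$: once $\Lambda_{\Gamma_0}$ is connected and not a circle, it invokes Proposition~\ref{prop:rel Anosov preserving domain}(3) to obtain a $\rho(\Gamma_0)$-invariant properly convex domain $\Omega \subset \proj(\Rb^4)$ directly (regardless of the span of the limit set), and then applies Scott's theorem (this is packaged as Proposition~\ref{prop:SL4R 3mfld grp}). Your $\dim V_0\in\{2,3\}$ sub-cases are correct but superfluous for the statement. Second, and more substantively, the paper pins down the peripherals by a completely different mechanism: it cites an external result (Proposition~\ref{prop:relAnosov implies DGF}, from \cite{rAGF_Mitul}) saying that a relatively 1-Anosov group preserving a properly convex domain acts as a geometrically finite convergence group on a boundary $2$-sphere, and then uses a Hruska--Walsh-type lemma (Lemma~\ref{lem:planar + convergence action = surface peripherals}) to conclude that each peripheral, acting parabolically on $S^2$, acts properly on the plane and is therefore virtually a free or surface group --- combined with virtual nilpotence this forces virtually $\Zb$ or $\Zb^2$. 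So the paper trades your hands-on 3-manifold/cohomological-dimension argument for these two black boxes; your approach is more self-contained but longer.

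One small gap in your peripheral step: deducing $\mathrm{cd}(\pi_1 N)\le 2$ from ``compact $3$-manifold with nonempty boundary is homotopy equivalent to a $2$-complex'' requires $N$ to be aspherical, and Scott's theorem does not give that on its own (think of $S^2\times[0,1]$). The cleanest patch bypasses the core here entirely: $\Omega_0/(P\cap\Gamma_0')$ is already an aspherical boundaryless $3$-manifold (a $K(P\cap\Gamma_0',1)$), and a torsion-free finitely generated nilpotent group of Hirsch length $3$ is an orientable Poincar\'e duality group with $H_3(\,\cdot\,;\Zb)\cong\Zb$, which forces $\Omega_0/(P\cap\Gamma_0')$ to be closed; then your infinite-sheeted-covering contradiction applies verbatim.
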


As in the proof of Theorem~\ref{thm:SL3R}, the bulk of the argument involves looking at what happens in the case where $\Lambda_{\Gamma_0}$ is neither a singleton nor a circle, and hence we are not in cases (i) or (ii). To this end, we will establish a few lemmas about subgroups of $\Gamma$ with connected limit set in $\partial(\Gamma,\Pc)$ not a point and not homeomorphic to a circle.
We first prove the following result, which may be seen as a relatively easier case of \cite[Thm.\ 8.1]{HruskaWalsh}. We note that all of our groups and peripheral subgroups are finitely generated.
\begin{lemma} \label{lem:planar + convergence action = surface peripherals}
Suppose $(\Gamma,\peripherals)$ is a non-elementary relatively hyperbolic group, and $\xi\colon \partial(\Gamma,\peripherals) \to \proj(\Rb^d)$ is a continuous equivariant embedding with image contained in a subset $S \subset \proj(\Rb^d)$ homeomorphic to a 2-sphere, such that $\Gamma$ acts on $S$ as a geometrically finite convergence group with maximal parabolic subgroups $\peripherals$ and limit set $\xi(\partial(\Gamma,\peripherals))$. 

Then all of the peripheral subgroups $P \in \peripherals$ are virtually surface groups or free groups.
\end{lemma}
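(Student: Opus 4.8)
The plan is to exploit the fact that $\Gamma$ acts as a geometrically finite convergence group on a 2-sphere $S$ with limit set $\Lambda = \xi(\partial(\Gamma,\peripherals))$, and to understand the peripheral (parabolic) subgroups $P \in \peripherals$ through their fixed points. Fix $P \in \peripherals$ and let $p \in S$ be its unique fixed point in $\Lambda$; since $P$ is a maximal parabolic subgroup of a geometrically finite convergence group on $S$, the complement $S \smallsetminus \{p\}$ is homeomorphic to $\Rb^2$, and $P$ acts properly discontinuously and cocompactly on $(\Lambda \smallsetminus \{p\}) \times_{?}$... more precisely $P$ acts cocompactly on $S \smallsetminus \{p\}$ with limit set $\Lambda \smallsetminus \{p\}$ a closed subset of $\Rb^2 = S\smallsetminus\{p\}$. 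So $P$ acts properly discontinuously, cocompactly, and with this limit set on $\Rb^2$ --- in other words $P$ is a cocompact Kleinian-type group acting on the plane. The key structural input is that $P$ is virtually nilpotent (Proposition \ref{prop:relA peripherals are v nilpotent}), so it is finitely generated, and in fact virtually abelian of rank at most... well, we only know virtually nilpotent, but a virtually nilpotent group acting properly discontinuously and cocompactly on (a neighbourhood of its limit set in) $\Rb^2$ is quite constrained.

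Concretely, the steps I would carry out are as follows. First I would recall that since the action on $S$ is a geometrically finite convergence action and $P$ is a maximal parabolic with fixed point $p$, the quotient of $(S \smallsetminus \{p\})$ — or more carefully, of a suitable "horoball complement" — by $P$ is compact; this is the definition of geometrical finiteness for convergence actions (Bowditch, and see \cite{Teddy_EGF}). This gives that $P$ is the fundamental group of a compact aspherical $2$-orbifold-with-boundary, OR one must argue that $P$ acts on $\Rb^2 \cong S \smallsetminus \{p\}$ freely after passing to a finite-index torsion-free subgroup (valid since $P$ is virtually nilpotent hence virtually torsion-free), and cocompactly on the complement of an open $P$-invariant neighbourhood of the limit set minus $p$... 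Actually the cleanest route: a torsion-free finite-index $P' \le P$ acts freely and properly discontinuously on the open surface $\Rb^2$, so $\Rb^2 / P'$ is an open surface with fundamental group $P'$, forcing $P'$ to be free or a (closed) surface group or $\Zb^2$ — but $\Zb^2$ cannot act cocompactly-minus-horoball unless the limit set $\Lambda \smallsetminus \{p\}$ is a line or the full plane, in which case the relevant quotient is still compact and $P' \cong \Zb^2$ is (virtually) free abelian, which is a surface group only if we allow the torus... wait, $\Zb^2$ is not a surface group in the sense of the paper (fundamental group of a \emph{closed hyperbolic} surface). So the honest conclusion is: $P'$, and hence $P$, is virtually free or virtually a \emph{closed} surface group — and I must rule out the closed-surface-group case arising with $P$ peripheral only if it genuinely can't occur, OR the lemma statement tolerates it (it says "virtually surface groups or free groups", and a closed surface group is a surface group). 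So I would argue that $P'$ is the fundamental group of the open surface $\Rb^2/P'$: if this surface is closed we get a closed surface group (the $\Zb^2$-torus case), if it has a boundary/end we get a free group.

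The remaining and most delicate point — which I expect to be the main obstacle — is justifying that $P'$ acting freely and properly discontinuously on $\Rb^2 \cong S \smallsetminus \{p\}$ has quotient with \emph{finitely generated} fundamental group, so that I may invoke the classification of surfaces to conclude $P'$ is free or a closed surface group. This follows from finite generation of $P$ (all peripherals are finitely generated by hypothesis, and $P$ virtually nilpotent is finitely generated anyway) together with the fact that a finitely generated fundamental group of an open surface of finite type is free unless the surface is closed. But one must be careful that $\Rb^2/P'$ has finite type; here I would use geometric finiteness of the convergence action to extract a compact core. Alternatively — and this is probably the slicker write-up — I would not pass through surface topology at all: since $P$ is virtually nilpotent and a nilpotent group acting as a discrete cocompact-type convergence group on (a $2$-sphere, equivalently, properly on $\Rb^2$) has a torsion-free finite-index subgroup $P'$ which is either trivial, $\Zb$, or $\Zb^2$ (the only torsion-free nilpotent groups of cohomological dimension $\le 2$ that can so act — cohomological dimension $\le 2$ because $P'$ acts freely properly discontinuously on $\Rb^2$), hence $P$ is virtually cyclic or virtually $\Zb^2$, both of which are virtually free or virtually (torus-)surface groups. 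This reduces everything to: a finitely generated torsion-free nilpotent group acting freely and properly discontinuously on $\Rb^2$ has $\mathrm{cd} \le 2$ and is therefore $\{1\}$, $\Zb$, or $\Zb^2$ — a clean statement I would prove by the standard argument that $\mathrm{cd}(P') \le \dim \Rb^2 = 2$ and torsion-free nilpotent groups of cohomological dimension $n$ are $\Zb^n$... (the last step uses that finitely generated torsion-free nilpotent groups are Poincaré duality groups of dimension equal to their Hirsch length, equal to their cohomological dimension). I would spell this out, as it is the crux.
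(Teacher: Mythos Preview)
Your core observation is correct and matches the paper: each maximal parabolic $P$ fixes a unique point $p \in S$, and since the $\Gamma$-action on $S$ is a convergence action with $P$ parabolic, $P$ acts properly discontinuously on the plane $S \smallsetminus \{p\} \cong \Rb^2$. The paper's proof is then a single sentence: it cites \cite[Cor.\ 3.2]{HruskaWalsh}, which says that any group acting properly on the plane is virtually a free group or a surface group. No cocompactness, no finite-type worries, no nilpotence.

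Your ``slicker'' route, however, has a genuine gap at the level of this lemma. You invoke Proposition~\ref{prop:relA peripherals are v nilpotent} to get that $P$ is virtually nilpotent, and then run a cohomological-dimension argument for torsion-free nilpotent groups. But Proposition~\ref{prop:relA peripherals are v nilpotent} requires a relatively Anosov representation, and the lemma as stated has no such hypothesis: it only assumes a continuous equivariant embedding $\xi$ and a geometrically finite convergence action on a $2$-sphere. So your argument does not prove the lemma; it proves a weaker statement sufficient for the intended application (Proposition~\ref{prop:connected bowditch boundaries for 1-Anosov in SL4R}), where the relatively Anosov hypothesis is present and where, indeed, the paper immediately combines the lemma with Proposition~\ref{prop:relA peripherals are v nilpotent} to reach ``virtually free abelian of rank $\leq 2$''. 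You have essentially merged the lemma with its application.

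Your surface-quotient route is closer in spirit to the Hruska--Walsh argument, but you again lean on virtual nilpotence to produce a torsion-free finite-index subgroup, which is the same unjustified step. The honest version of that route (showing that a group acting properly on $\Rb^2$ is a $2$-orbifold group, hence virtually free or a surface group) is precisely what \cite[Cor.\ 3.2]{HruskaWalsh} packages; you should cite it rather than reprove it, and you should drop the nilpotence input entirely.
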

\begin{proof}
By our hypotheses, the parabolic action of each $P \in \peripherals$ on $\xi(\partial(\Gamma,\peripherals))$ extends to a parabolic action of $P$ on all of the 2-sphere $S$.
Hence $P$ acts properly on the plane $S \smallsetminus \{x_P\}$, where $x_P$ denotes the fixed point of $P$, and therefore $P$ is virtually a surface group or free group (see \cite[Cor.\ 3.2]{HruskaWalsh}).
\end{proof}

This will be used in conjunction with the following
\begin{proposition}[\cite{rAGF_Mitul}] \label{prop:relAnosov implies DGF}
Suppose $\rho\colon (\Gamma,\peripherals) \to \PGL_d(\Rb)$ is a relatively 1-Anosov representation with limit map $\xi_{\rho}^1$, such that $\rho(\Gamma)$ preserves a properly convex domain $\Omega \subset \proj(\Rb^d)$. 

Then there exists a domain $\Omega_1 \subset \Omega$ such that $\rho(\Gamma)$ acts on $\partial\Omega$ as a geometrically finite convergence group with limit set $\xi_{\rho}^1(\partial(\Gamma,\peripherals))$.
\end{proposition}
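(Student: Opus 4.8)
The plan is to take $\Omega_1$ to be the convex hull of the limit set $L := \xi_{\rho}^1(\partial(\Gamma,\peripherals))$, to show that $\rho(\Gamma)$ acts on $\overline{\Omega_1}$ as a convergence group with limit set $L$, and to observe that its restriction to $L$ is conjugate via $\xi_{\rho}^1$ to the Bowditch action of $\Gamma$ on $\partial(\Gamma,\peripherals)$; geometric finiteness then follows at once from Bowditch's theorem. (I read ``acts on $\partial\Omega$'' in the statement as ``acts on $\partial\Omega_1$''.)

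First I would set up the geometry. Since $\xi_{\rho}^1$ is injective (transversality together with compatibility), arguing as in Observation~\ref{obs1} one sees that $\rho$ has finite kernel and $\rho(\Gamma)$ is discrete, hence acts properly discontinuously on $\Omega$. Then $L \subset \partial\Omega$: for loxodromic $\gamma$ the element $\rho(\gamma)$ is proximal, its attracting fixed point $\xi_{\rho}^1(\gamma^+)$ is a limit of points $\rho(\gamma)^n q$ with $q \in \Omega$ and hence lies in $\overline{\Omega}$, and it cannot lie in $\Omega$ (otherwise the infinite group $\langle\rho(\gamma)\rangle$ would fix a point of $\Omega$); since attracting fixed points of loxodromics are dense in $\partial(\Gamma,\peripherals)$ and $\xi_{\rho}^1$ is continuous, $L \subset \partial\Omega$. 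Applying the same reasoning to the contragredient representation $\rho^{\vee}$, $\rho^{\vee}(\gamma) = {}^{t}\rho(\gamma)^{-1}$ --- which is again relatively $1$-Anosov, preserves the dual properly convex domain $\Omega^{*}$, and has limit map sending $x$ to the functional cutting out $\xi_{\rho}^{d-1}(x)$ --- I would conclude that each $\xi_{\rho}^{d-1}(x)$ is disjoint from $\Omega$, i.e.\ is a supporting hyperplane of $\Omega$ at $\xi_{\rho}^1(x)$. We may assume $L$ spans $\Rb^d$ (otherwise restrict to $\proj(\Spanset(L))$, where the same structure persists); then $\Omega_1 := \inte(\mathrm{Conv}(L))$ is a nonempty properly convex $\rho(\Gamma)$-invariant domain with $\overline{\Omega_1} = \mathrm{Conv}(L) \subset \overline{\Omega}$, so $\Omega_1 \subset \Omega$.

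The crucial point is that $\Omega_1$ is strictly convex at every point of $L$. Indeed, for $y \in \partial(\Gamma,\peripherals)$ the hyperplane $\xi_{\rho}^{d-1}(y)$ supports $\Omega_1$, so $F_y := \xi_{\rho}^{d-1}(y) \cap \overline{\Omega_1}$ is a face of $\overline{\Omega_1}$, hence the convex hull of its extreme points; these are extreme points of $\overline{\Omega_1}$, which all lie in $L$, and they lie in $\xi_{\rho}^{d-1}(y)$, so by transversality $L \cap \xi_{\rho}^{d-1}(y) = \{\xi_{\rho}^1(y)\}$ forces $F_y = \{\xi_{\rho}^1(y)\}$. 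Now to verify the convergence property, let $(\gamma_n)$ be a sequence of distinct elements of $\Gamma$; passing to a subsequence, there are $x, y \in \partial(\Gamma,\peripherals)$ with $\gamma_n \to x$ and $\gamma_n^{-1} \to y$ (the $\Gamma$-action on $\partial(\Gamma,\peripherals)$ being a convergence action). The uniform form of the strong dynamics-preserving property then gives $\rho(\gamma_n) \to \xi_{\rho}^1(x)$ uniformly on compact subsets of $\proj(\Rb^d) \smallsetminus \xi_{\rho}^{d-1}(y)$, hence uniformly on compact subsets of $\overline{\Omega_1} \smallsetminus F_y = \overline{\Omega_1} \smallsetminus \{\xi_{\rho}^1(y)\}$ --- precisely the convergence group condition. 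The same computation shows every orbit accumulation point of $\rho(\Gamma)$ on $\overline{\Omega_1}$ lies in $L$, and conversely every $\xi_{\rho}^1(x)$ is realized as such, so the limit set of this convergence action is exactly $L = \xi_{\rho}^1(\partial(\Gamma,\peripherals))$.

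Finally, $\xi_{\rho}^1 \colon \partial(\Gamma,\peripherals) \to L$ is a $\rho$-equivariant homeomorphism, and $\ker\rho$ acts trivially on $\partial(\Gamma,\peripherals)$ (again by injectivity of $\xi_{\rho}^1$), so $\xi_{\rho}^1$ conjugates the $\rho(\Gamma)$-action on $L$ to the $\Gamma$-action on $\partial(\Gamma,\peripherals)$. As being a conical, respectively bounded parabolic, point is an invariant of a convergence action on its limit set, and $\Gamma$ acts on $\partial(\Gamma,\peripherals)$ as a geometrically finite convergence group with maximal parabolics $\peripherals$ (Bowditch \cite[\S6]{Bowditch_relhyp}), the $\rho(\Gamma)$-action on $\partial\Omega_1$ is geometrically finite with limit set $\xi_{\rho}^1(\partial(\Gamma,\peripherals))$, which is the claim. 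I expect the one genuinely nontrivial ingredient to be the uniform strong dynamics-preserving estimate used in the third paragraph: the pointwise version in Definition~\ref{definition-relAnosov}(ii) does not formally yield local uniform convergence of the $\rho(\gamma_n)$, and one upgrades it using the Cartan-projection (divergence) estimates that accompany relative Anosov-ness. The rest is a soft transfer of the Bowditch convergence dynamics along the limit map.
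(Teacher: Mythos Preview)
The paper does not prove this proposition; it is quoted without proof from \cite{rAGF_Mitul}. So there is no ``paper's own proof'' to compare against, and your argument must stand on its own merits --- which, by and large, it does.

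Your strategy is the natural one, and the key step is correct: taking $\Omega_1$ to be the relative interior of the convex hull of $L$, the face $F_y = \xi_\rho^{d-1}(y) \cap \overline{\Omega_1}$ reduces to the single point $\xi_\rho^1(y)$ because extreme points of $\overline{\Omega_1} = \overline{\mathrm{Conv}}(L)$ lie in the compact set $L$ (Milman), and transversality forces $L \cap \xi_\rho^{d-1}(y) = \{\xi_\rho^1(y)\}$. This is exactly what is needed to pass from the projective dynamics to a convergence action on $\overline{\Omega_1}$, and the transfer of geometric finiteness along the equivariant homeomorphism $\xi_\rho^1$ is straightforward.

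You are also right to flag the one genuinely analytic input: the definition only gives pointwise convergence $\rho(\gamma_n)V \to \xi_\rho^1(x)$, not locally uniform convergence on $\proj(\Rb^d) \smallsetminus \proj(\xi_\rho^{d-1}(y))$. The upgrade follows from $\mathsf{P}_1$-divergence of relatively $1$-Anosov representations (i.e.\ $\sigma_1(\rho(\gamma_n))/\sigma_2(\rho(\gamma_n)) \to \infty$, see \cite[\S4]{ZZ1} or \cite{reldomreps}), together with the identification of the Cartan attractor and repeller with $\xi_\rho^1(x)$ and $\xi_\rho^{d-1}(y)$; it would strengthen your write-up to say this explicitly rather than leave it as an expectation. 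One minor quibble: when $L$ does not span $\Rb^d$, your $\Omega_1$ lives in $\proj(\Spanset(L))$ and the literal inclusion $\Omega_1 \subset \Omega$ may fail; this does not affect the conclusion about the convergence action on $\partial\Omega_1$, which is what is actually used downstream.
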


\begin{proposition} \label{prop:connected bowditch boundaries for 1-Anosov in SL4R}
Suppose $(\Gamma,\peripherals)$ is a non-elementary relatively hyperbolic group, that $\rho \colon (\Gamma,\peripherals) \to \SL_4(\Rb)$ is 1-Anosov relative to $\peripherals$, and that $\partial(\Gamma,\peripherals)$ is connected and not homeomorphic to a circle.

Then each of the peripherals $P \in \peripherals$ is free abelian of rank at most 2.
\end{proposition}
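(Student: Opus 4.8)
The plan is to realize $\rho(\Gamma)$ as a geometrically finite convergence group acting on a topological $2$-sphere, and then to combine the planarity of its parabolic subgroups with the fact that peripheral subgroups of relatively Anosov representations are virtually nilpotent.

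First, since $\partial(\Gamma,\peripherals)$ is connected and not homeomorphic to $S^1$, Proposition \ref{prop:not circle implies 2-eps connected} produces a \emph{non-cut pair} in $\partial(\Gamma,\peripherals)$, so the hypotheses of Proposition \ref{prop:rel Anosov preserving domain} are satisfied. Composing $\rho$ with the projection $\SL_4(\Rb)\to\PGL_4(\Rb)$, part (1) of that proposition shows that $\xi_\rho^1(\partial(\Gamma,\peripherals))$ is bounded in some affine chart of $\proj(\Rb^4)$, and then part (3) produces a properly convex $\rho(\Gamma)$-invariant domain $\Omega\subset\proj(\Rb^4)$. As $\Omega$ is an open convex subset of $\proj(\Rb^4)=\Rb P^3$ contained in an affine chart, $\overline{\Omega}$ is a compact convex body in $\Rb^3$, and hence $\partial\Omega$ is homeomorphic to $S^2$.

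Next, by Proposition \ref{prop:relAnosov implies DGF} the group $\rho(\Gamma)$ acts on $\partial\Omega\cong S^2$ as a geometrically finite convergence group whose limit set is $\xi_\rho^1(\partial(\Gamma,\peripherals))$ and whose maximal parabolic subgroups are the conjugates of the $P\in\peripherals$. Since the limit map $\xi_\rho^1$ is a $\rho$-equivariant homeomorphism onto its image, this places us exactly in the setting of Lemma \ref{lem:planar + convergence action = surface peripherals}, and we conclude that every $P\in\peripherals$ is virtually a closed surface group or a free group. On the other hand, each $P$ is virtually nilpotent by Proposition \ref{prop:relA peripherals are v nilpotent}, and peripheral subgroups are infinite; the only groups lying in both classes are the virtually-$\Zb$ and virtually-$\Zb^2$ groups, so each $P$ is one of these.

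The remaining point --- upgrading ``virtually $\Zb$ or virtually $\Zb^2$'' to ``free abelian of rank at most $2$'' --- is where I expect the only genuine care to be needed, since groups such as the infinite dihedral group and the Klein bottle group are virtually free abelian of rank $\leq 2$ and virtually nilpotent yet are not free abelian. These are excluded by noting that they are not themselves nilpotent: after passing to a suitable finite-index normal subgroup of $\Gamma$, as permitted in the formulation of Theorem \ref{thm:SL4R}, one may assume that each peripheral subgroup is honestly nilpotent and torsion-free, and a finitely generated torsion-free nilpotent group of Hirsch length at most $2$ is free abelian of rank at most $2$. Performing this finite-index reduction uniformly over the finitely many conjugacy classes of peripheral subgroups, compatibly with the relatively hyperbolic structure, should be routine but is the most technical part of the argument; all of the geometric content is contained in the two preceding paragraphs.
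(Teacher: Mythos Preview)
Your argument matches the paper's essentially line for line: invoke Proposition~\ref{prop:rel Anosov preserving domain} to get a $\rho(\Gamma)$-invariant properly convex domain in $\proj(\Rb^4)$, observe that its boundary is a $2$-sphere, apply Proposition~\ref{prop:relAnosov implies DGF} to get a geometrically finite convergence action on this sphere, use Lemma~\ref{lem:planar + convergence action = surface peripherals} to conclude the peripherals are virtually free or surface groups, and intersect with virtually nilpotent (Proposition~\ref{prop:relA peripherals are v nilpotent}) to get virtually $\Zb$ or $\Zb^2$. Your write-up is in fact slightly more careful than the paper's at a couple of points (e.g.\ explaining why $\partial\Omega\cong S^2$).

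Your hesitation in the last paragraph is well placed. The paper's own proof only concludes that each $P$ is \emph{virtually} free abelian of rank at most $2$; it does not carry out the finite-index reduction you sketch, and indeed when the proposition is invoked in the proof of Theorem~\ref{thm:SL4R connected} only the ``virtually'' conclusion is used. So the wording of the proposition is a mild overstatement relative to both what is proved and what is needed; you are right that groups like the infinite dihedral group or the Klein bottle group are not excluded by the argument as it stands. The fix you propose (pass to a finite-index subgroup of $\Gamma$ so that the peripherals become torsion-free nilpotent, hence free abelian of rank $\leq 2$) is correct and entirely in the spirit of how the result is consumed downstream, but you should not feel that you are missing a trick: the paper does not do this step either.
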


\begin{proof} 
By Proposition \ref{prop:rel Anosov preserving domain}, $\rho(\Gamma)$ preserves a properly convex domain. In particular. $\xi(\partial(\Gamma,\peripherals))$ embeds in the boundary of this domain and hence $\partial(\Gamma,\peripherals)$ is planar. Indeed, from Proposition \ref{prop:relAnosov implies DGF}, $\rho(\Gamma)$ acts on the 2-sphere as a geometrically finite convergence group.
 
Then, by Lemma \ref{lem:planar + convergence action = surface peripherals}, each of the peripherals is virtually a free group or virtually a surface group.
Since the peripherals in $\peripherals$ must be virtually nilpotent for $\rho$ to be 1-Anosov relative to $\peripherals$, we conclude that each of the peripherals is virtually free abelian, of rank at most 2.
\end{proof}

We might hope to conclude that $\Gamma$ is a Kleinian group. This could follow from an extension to the Cannon conjecture, which remains a major open question. (See \cite[Conj.\ 1.1]{HruskaWalsh} for a statement given the additional assumption that $\partial(\Gamma,\peripherals)$ has no cut points.) In our case, we can at least obtain that $\Gamma$ is the fundamental group of a 3-manifold:

\begin{proposition} \label{prop:SL4R 3mfld grp}
Suppose $(\Gamma,\peripherals)$ is a relatively hyperbolic group with $\partial(\Gamma,\peripherals)$ connected and not homeomorphic to $S^1$. Suppose $\rho\colon \Gamma \to \SL_4(\Rb)$ is 1-Anosov relative to $\peripherals$. Then $\Gamma / \ker \rho$ is the fundamental group of a compact 3-manifold $N$ \textup{(}possibly with boundary\textup{)}.

\begin{proof} Because $\partial(\Gamma,\peripherals)$ is connected and not homeomorphic to $S^1$, the image subgroup $\rho(\Gamma)$ preserves a domain $\Omega \subset \proj(\Rb^4)$.
Then $\Omega / \rho(\Gamma)$ is a 3-manifold $M$, and $\rho(\Gamma) \cong \pi_1(M)$.

We may assume $M$ is compact by the Scott core theorem \cite{Scott}. This states that: if $\Gamma$ is the fundamental group of a 3-manifold $M$ (possibly with boundary) and $\Gamma$ is finitely-generated, then there exists a compact three-dimensional submanifold $N \subset M$, called the compact core, such that the inclusion $\iota\colon N \into M$ induces an isomorphism on fundamental groups.
\end{proof}
\end{proposition}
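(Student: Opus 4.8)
The plan is to realize $N$ as a compact core of the quotient of a $\rho(\Gamma)$-invariant properly convex domain living inside $\proj(\Rb^4)\cong \mathbb{RP}^3$. Concretely I would proceed in three steps: (a) use the relatively Anosov hypothesis together with the topological hypotheses on $\partial(\Gamma,\peripherals)$ to produce an invariant properly convex open domain $\Omega \subset \proj(\Rb^4)$; (b) observe that the induced action of $\rho(\Gamma)$ on $\Omega$ is properly discontinuous, so that the quotient is a $3$-manifold (or orbifold) with the expected fundamental group; (c) invoke Scott's compact core theorem to cut down to a compact piece.

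For step (a): since $\partial(\Gamma,\peripherals)$ is connected and not homeomorphic to $S^1$, Proposition~\ref{prop:not circle implies 2-eps connected} yields a non-cut pair of points, and then Proposition~\ref{prop:rel Anosov preserving domain}(1) shows that $\xi_\rho^1(\partial(\Gamma,\peripherals))$ is bounded inside some affine chart of $\proj(\Rb^4)$; part (3) of the same proposition then produces a properly convex open domain $\Omega \subset \proj(\Rb^4)$ preserved by the image of $\rho(\Gamma)$ in $\PGL_4(\Rb)$. By Observation~\ref{obs1} the kernel of $\rho$ is finite and the image of $\rho$ is discrete, so $\Gamma/\ker\rho \cong \rho(\Gamma)$, and its image $\bar{G}$ in $\PGL_4(\Rb)$ is discrete and equals $\Gamma/\ker\rho$ up to the (harmless) central involution $\pm I$.

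For step (b): a discrete subgroup of $\PGL_4(\Rb)$ preserving a properly convex domain $\Omega$ acts on $\Omega$ properly discontinuously, because the projective automorphism group of a properly convex domain acts properly with respect to the Hilbert metric on $\Omega$. Since $\Omega$ is an open convex subset of $\mathbb{RP}^3$, hence contractible, the quotient $M:=\Omega/\bar{G}$ is an aspherical $3$-manifold when $\bar{G}$ is torsion-free, and a $3$-orbifold in general, and in either case $\pi_1(M)$ (resp.\ $\pi_1^{\mathrm{orb}}(M)$) is isomorphic to $\bar{G}$.

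For step (c): $\bar{G}$ is a quotient of the finitely generated group $\Gamma$, hence finitely generated, so Scott's compact core theorem \cite{Scott} provides a compact submanifold $N \subset M$ (possibly with boundary) whose inclusion is a $\pi_1$-isomorphism; thus $\pi_1(N) \cong \bar{G} \cong \Gamma/\ker\rho$ (up to the central $\mathbb{Z}/2$). The only genuine obstacle here is bookkeeping: $M$ fails to be compact precisely because the $\bar{G}$-action on $\Omega$ is not cocompact near the cusps of $(\Gamma,\peripherals)$, and $M$ is literally a manifold only when $\bar{G}$ is torsion-free. Both issues are routine; the torsion point can be handled either by passing to a torsion-free finite-index subgroup (compatible with the ``up to finite extensions and finite-index subgroups'' formulation of Theorem~\ref{thm:SL4R}) or via the orbifold version of the core theorem.
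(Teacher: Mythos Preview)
Your proposal is correct and follows essentially the same route as the paper's own proof: produce a $\rho(\Gamma)$-invariant properly convex domain $\Omega\subset\proj(\Rb^4)$ via Proposition~\ref{prop:rel Anosov preserving domain}, pass to the quotient $3$-manifold, and apply Scott's compact core theorem. You are in fact more careful than the paper on two technical points it elides---the proper discontinuity of the action (via the Hilbert metric) and the $\SL_4$ versus $\PGL_4$ discrepancy coming from the central $\pm I$---and your remark that the torsion/orbifold issue is absorbed by the ``up to finite extensions and finite-index subgroups'' phrasing of Theorem~\ref{thm:SL4R} is exactly right.
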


\begin{proof}[Proof of Theorem~\ref{thm:SL4R connected}]
If $\xi_{\rho}^1\colon\partial(\Gamma,\mathcal{P})\rightarrow \mathbb{P}(\mathbb{R}^4)$ restricted to $\Lambda_{\Gamma_0}\subset \partial(\Gamma,\peripherals)$ is constant, then $\Gamma_0$ has to be a virtually nilpotent group by Proposition \ref{prop:peripheral unique flags} and Proposition \ref{prop:relA peripherals are v nilpotent}. Suppose this is not the case, and assume that $\Gamma_0$ is not virtually a free group or surface group.
Then, by Proposition~\ref{prop:SL4R 3mfld grp}, we know that $\Gamma_0 / \ker \rho$ is the fundamental group of a compact 3-manifold, and, by Proposition~\ref{prop:connected bowditch boundaries for 1-Anosov in SL4R}, we know that each of the peripheral subgroups of $\Gamma_0 / \ker \rho$ is virtually free abelian of rank at most 2. Hence (iii) holds. 
\end{proof}

\begin{remark}
If $\Gamma_0 \cong \pi_1 N^3$ where $N^3$ is a compact 3-manifold admitting a JSJ decomposition (this happens e.g.\ when $N^3$ has empty or toroidal boundary, see \cite[Thm.\ 7.2.2]{AFW}),
then we can further conclude, using the JSJ decomposition, that $N^3$ consists of hyperbolizable pieces joined across tori.

We remark that the JSJ decomposition represents $\Gamma_0$ as a graph of groups with vertex groups the fundamental groups of the JSJ components, and edge groups the fundamental groups of the JSJ tori. This does \emph{not} coincide with the JSJ splitting of $\Gamma_0$ as a relatively hyperbolic group (with connected Bowditch boundary) over elementary subgroups. 

More generally, e.g.\ if $\Gamma_0 \cong \pi_1 N^3$ where $N^3$ is a compact 3-manifold with atoroidal boundary, it is less clear how we can use 3-manifold theory to say more about what $N^3$ can be.
\end{remark}

\begin{proof}[Proof of Theorem~\ref{thm:SL4R}] By Theorem~\ref{thm:splittings from disconnected Bowditch boundary}, there exists a maximal splitting $\Gamma=\pi_1(\mathcal{G})$ of $\Gamma$ relative to $\mathcal{P}$, where $\mathcal{G}$ is a finite graph of groups whose edge groups are finite and whose vertex groups are relatively hyperbolic with connected Bowditch boundary and connected limit set in $\partial(\Gamma, \mathcal{P})$.
In particular, each of the vertex groups either (i) has singleton boundary or (ii) has connected (non-singleton) boundary.

The vertex groups in case (i), with singleton boundary, must be virtually peripheral, and hence are virtually nilpotent.

If there is a vertex group $\Gamma_0$ which falls into case (ii), and is not virtually a free group or surface group, then, by Theorem~\ref{thm:SL4R connected}, $\Gamma_0$ is a finite extension (by $\ker \rho$) of a group which is virtually the fundamental group of a compact 3-manifold and hyperbolic relative to virtually free abelian subgroups of rank at most 2.

An application of Proposition~\ref{graph-torsionfree} then yields the desired conclusion.
\end{proof}

\section{General constraints on the Bowditch boundary}

\subsection{Restrictions on topological dimension of the boundary} \label{subsec:restrictions on dim bdy}

In general, if a group $\Gamma$ admits a $k$-Anosov representation into $\SL_d(\Rb)$, where $1\leq k< \frac{d}{2}$, then the topological dimension of the boundary $\partial_\infty\Gamma$ is at most $d-k-1$ \cite[Thm.\ 1.3]{CanaryTsouvalas}. The same argument in fact works for relatively $k$-Anosov representations.

\begin{theorem}\label{upperbound-dim} Let $d\geq 2$ an integer, $1\leq k\leq \frac{d}{2}$ and $\rho:(\Gamma,\mathcal{P})\rightarrow \mathsf{SL}_d(\mathbb{R})$ be a relatively $k$-Anosov representation. Then we have the upper bound $$\textup{dim}(\partial(\Gamma,\mathcal{P}))\leq d-k-1,$$ unless $(d,k)\in \{(2,1),(4,2),(8,4),(16,8)\}$ and $\partial(\Gamma,\mathcal{P})\cong S^{d-k}$.
\end{theorem}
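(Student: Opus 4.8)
The plan is to reduce to the Anosov-into-$\SL_d(\Rb)$ setting of Canary--Tsouvalas \cite[Thm.\ 1.3]{CanaryTsouvalas} by passing to a loxodromic element's limit set, and then to import their dimension bound via the limit map. Concretely, suppose $\rho\colon (\Gamma,\peripherals)\to \SL_d(\Rb)$ is relatively $k$-Anosov with limit maps $\xi_\rho^k,\xi_\rho^{d-k}$. First I would recall that by passing to the $k$-th exterior power $\wedge^k\colon\SL_d(\Rb)\to\SL(\wedge^k\Rb^d)$ we may assume $k=1$ (the limit maps compose with the Plücker embedding; cf.\ the reductions already used in Lemma~\ref{free} and the proof of Theorem~\ref{2q+1}), at the cost of enlarging the target---but since we want the bound $\dim\partial(\Gamma,\peripherals)\le d-k-1$ in terms of the \emph{original} $(d,k)$, I should instead keep track of the relevant subspace: the span $V:=\mathrm{span}(\xi_\rho^k(\partial(\Gamma,\peripherals))\cap\xi_\rho^k(x))$ type bookkeeping, or more cleanly follow the Canary--Tsouvalas argument line-by-line. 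The cleaner route is: the continuous equivariant map $\xi_\rho^k\colon\partial(\Gamma,\peripherals)\to\Gr_k(\Rb^d)$ is an embedding (by transversality: distinct points have transverse $k$- and $(d-k)$-planes, so $\xi_\rho^k$ is injective, and it is a continuous injection from a compact space). Hence $\dim\partial(\Gamma,\peripherals)=\dim\xi_\rho^k(\partial(\Gamma,\peripherals))$, and it suffices to bound the topological dimension of this compact subset of $\Gr_k(\Rb^d)$.

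The heart of the matter is a local estimate around a generic point, exactly as in \cite[\S2]{CanaryTsouvalas}. Using Lemma~\ref{free}, choose $a,b\in\Gamma$ with $\langle\rho(a),\rho(b)\rangle$ a $k$-Anosov free subgroup, and pick a loxodromic $\gamma$; near the attracting fixed flag $\xi_\rho^k(\gamma^+)$ the image $\xi_\rho^k(\partial(\Gamma,\peripherals))$ lies, by transversality with $\xi_\rho^{d-k}(\gamma^-)$, inside the affine chart $\{W\in\Gr_k(\Rb^d):W\cap\xi_\rho^{d-k}(\gamma^-)=0\}$, which is an affine space of dimension $k(d-k)$; but the contraction dynamics of $\rho(\gamma)$ on this chart is governed by the singular values, and the strongly dynamics-preserving property forces the limit set to accumulate onto $\xi_\rho^k(\gamma^+)$ within a piece whose dimension is constrained. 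This is precisely where Canary--Tsouvalas invoke a Borsuk--Ulam / Knaster--Kuratowski--Mazurkiewicz-type topological argument (connected with the classical fact that $S^n$ does not embed in $\Rb^n$, and the Hopf-invariant-one exceptions) to get $\dim\le d-k-1$ with the sphere exceptions at $(d,k)\in\{(2,1),(4,2),(8,4),(16,8)\}$ coming from the parallelizability of $S^1,S^3,S^7$ (equivalently the division algebras $\Rb,\Cb,\Hb,\Oc$). The key point I need to check is that nothing in that argument used hyperbolicity of $\Gamma$ or compactness/cocompactness beyond: (a) $\partial(\Gamma,\peripherals)$ is a compact metrizable space, (b) $\Gamma$ acts on it as a minimal non-elementary convergence group, (c) the limit maps are continuous, equivariant, transverse, and strongly dynamics-preserving. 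All of (a)--(c) hold in the relatively Anosov setting (Bowditch, and Definition~\ref{definition-relAnosov}), so the argument transfers verbatim.

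In the exceptional cases $(d,k)\in\{(2,1),(4,2),(8,4),(16,8)\}$ one must additionally pin down that if $\dim\partial(\Gamma,\peripherals)=d-k$ (the maximal value not excluded by the generic bound) then $\partial(\Gamma,\peripherals)\cong S^{d-k}$: here I would again quote the corresponding step of Canary--Tsouvalas, where the local model shows the limit set locally looks like (an open subset of) $S^{d-k}$ sitting inside $\Gr_k(\Rb^d)$ in the standard way, and then a connectedness/invariance-of-domain argument promotes "locally $S^{d-k}$" to "globally $S^{d-k}$", using minimality of the convergence action. \textbf{The main obstacle} I anticipate is purely expository rather than mathematical: making sure that the delicate topological-dimension estimate of \cite{CanaryTsouvalas}---which is stated for hyperbolic groups and Gromov boundaries---really only consumes the convergence-group/limit-map package and not, say, the Bestvina--Mess machinery or the $\mathrm{cd}$-vs-$\dim$ identity that is special to the hyperbolic case. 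Once that is verified, the proof is a citation plus the observation that $\xi_\rho^k$ is an embedding and that the exterior-power reduction does not interfere with the numerology; so I would structure the write-up as "the argument of \cite[Thm.\ 1.3]{CanaryTsouvalas} applies mutatis mutandis, using Definition~\ref{definition-relAnosov} and Lemma~\ref{free} in place of the corresponding facts about Anosov representations of hyperbolic groups," with a short paragraph isolating exactly which inputs are used.
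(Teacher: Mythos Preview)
Your high-level plan---that the Canary--Tsouvalas argument transfers verbatim once one checks it only consumes the limit maps and the convergence-group structure---is correct and is exactly what the paper does. However, your description of that argument is substantially off, and following your sketch as written would not produce a proof. There is no use of singular-value contraction dynamics, Borsuk--Ulam, or KKM in \cite{CanaryTsouvalas}, and Lemma~\ref{free} (the free Anosov subgroup) is not needed at all. The actual argument is direct: fix any $x_0\in\partial(\Gamma,\Pc)$ and any $v\notin\xi_\rho^{d-k}(x_0)$; transversality alone makes
\[
f_v(x):=\big[\xi_\rho^k(x)\cap(\xi_\rho^{d-k}(x_0)\oplus\Rb v)\big]
\]
a continuous, proper, injective map $\partial(\Gamma,\Pc)\smallsetminus\{x_0\}\to\Rb^{d-k}$, which immediately gives $\dim\partial(\Gamma,\Pc)\le d-k$. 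If equality holds, a relative version of a result of Kapovich--Benakli (Proposition~\ref{KapovichBenakli-rel} in the paper, needing only density of loxodromic pole-pairs in $\partial(\Gamma,\Pc)^2$) forces $\partial(\Gamma,\Pc)\cong S^{d-k}$; then $f_v$ is surjective, so $\bigcup_\eta\Pb(\xi_\rho^k(\eta))=\Pb(\Rb^d)$, yielding a genuine fibration $\Pb(\Rb^k)\hookrightarrow\Pb(\Rb^d)\twoheadrightarrow S^{d-k}$, and Adams' Hopf-invariant-one theorem forces $(d,k)\in\{(2,1),(4,2),(8,4),(16,8)\}$.

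The one relative-specific ingredient you did not flag is precisely this Kapovich--Benakli step; conversely, your worry about Bestvina--Mess and the $\mathrm{cd}$-versus-$\dim$ identity is well placed---those are indeed not used.
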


Before we give the proof of Theorem \ref{upperbound-dim} we need the following proposition. 

\begin{proposition}\label{KapovichBenakli-rel}
If $(\Gamma,\Pc)$ admits a relatively $1$-Anosov representation into $\SL_d(\Rb)$ with $d \geq 4$, and $\partial(\Gamma,\Pc)$ has dimension $d-2$, then $\partial(\Gamma,\Pc) \cong S^{d-2}$.
\begin{proof} This follows from \cite[Thm.\ 4.4]{KapoBen}. The result is stated there for Gromov boundaries of hyperbolic groups, but also applies to the Bowditch boundary of a non-elementary relatively hyperbolic group $(\Gamma,\peripherals)$ with the same proof, since pairs $(\gamma^+, \gamma^-)$ of fixed points of loxodromic elements in any relatively hyperbolic group $(\Gamma,\peripherals)$ are dense in $(\partial(\Gamma,\peripherals))^2$.
\end{proof} \end{proposition}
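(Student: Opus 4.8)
The plan is to reduce the statement to \cite[Thm.\ 4.4]{KapoBen} and to check that the proof of that theorem, written there for Gromov boundaries of hyperbolic groups, goes through for Bowditch boundaries. The first step is to observe that the limit map $\xi := \xi_{\rho}^{1}\colon \partial(\Gamma,\peripherals)\to \proj(\Rb^d)$ is a topological embedding. Indeed, by compatibility $\xi_{\rho}^{1}(x)\subset \xi_{\rho}^{d-1}(x)$ for all $x$, so if $\xi_{\rho}^{1}(x)=\xi_{\rho}^{1}(y)$ with $x\neq y$ then the line $\xi_{\rho}^{1}(x)$ lies in $\xi_{\rho}^{d-1}(y)$, contradicting transversality $\Rb^d = \xi_{\rho}^{1}(x)\oplus \xi_{\rho}^{d-1}(y)$; hence $\xi_{\rho}^{1}$ is injective, and an injective continuous map from a compact metrizable space to a Hausdorff space is a homeomorphism onto its image. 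Thus $\partial(\Gamma,\peripherals)$ is homeomorphic to a compact subset $Z\subset \proj(\Rb^d)$ of the $(d-1)$-dimensional manifold $\proj(\Rb^d)$, and since $\dim Z = \dim\partial(\Gamma,\peripherals) = d-2$ with $d-2\geq 2$, the set $Z$ is a codimension-one compact subset of topological dimension at least $2$.

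This is exactly the configuration handled by \cite[Thm.\ 4.4]{KapoBen}: a codimension-one compact subset of a manifold, of dimension at least $2$, that carries a minimal convergence action of a non-elementary group whose loxodromic elements have fixed-point pairs dense in the square of the set, is homeomorphic to a sphere --- here to $S^{d-2}$. (The dimension-at-least-$2$ hypothesis is what the assumption $d\geq 4$ provides; for $d=3$ the conclusion genuinely fails, since a codimension-one compact limit set in a surface need not be a circle.) The proof of \cite[Thm.\ 4.4]{KapoBen} is topological and dynamical, and uses only the following features of the boundary: it is compact, metrizable, and perfect; the group acts on it minimally as a convergence group; and pairs $(\gamma^{+},\gamma^{-})$ of fixed points of loxodromic elements are dense in the square. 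All of these hold for $\partial(\Gamma,\peripherals)$: it is compact, metrizable, and perfect because $(\Gamma,\peripherals)$ is non-elementary; $\Gamma$ acts on it as a geometrically finite, hence minimal, convergence group by Bowditch's theorem \cite{Bowditch_relhyp}; and, by standard convergence-group dynamics (see e.g.\ \cite{Tukia}) together with the abundance of loxodromic elements furnished by Lemma \ref{free}, the pairs $(\gamma^{+},\gamma^{-})$ of fixed points of loxodromic $\gamma\in\Gamma$ are dense in $(\partial(\Gamma,\peripherals))^{2}$. Hence the argument of \cite[Thm.\ 4.4]{KapoBen} applies verbatim to $Z$ and yields $\partial(\Gamma,\peripherals)\cong Z\cong S^{d-2}$.

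The main obstacle --- and the only point that is genuinely new relative to the hyperbolic case --- is the density of loxodromic fixed-point pairs in $(\partial(\Gamma,\peripherals))^{2}$, together with the routine but necessary audit of the proof of \cite[Thm.\ 4.4]{KapoBen} confirming that it invokes no property of Gromov boundaries beyond those listed above. If local connectedness of the boundary is used at any step, this causes no difficulty here either, since the peripherals are virtually nilpotent.
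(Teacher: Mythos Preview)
Your proof takes essentially the same approach as the paper's: both reduce to \cite[Thm.\ 4.4]{KapoBen} and identify the density of loxodromic fixed-point pairs in $(\partial(\Gamma,\peripherals))^{2}$ as the one ingredient needing verification in the relative setting. You supply more detail (the embedding via $\xi_\rho^1$, the audit of convergence-group properties, the side remark on local connectedness), but the paper's proof is just a two-sentence pointer to exactly this reduction; your citation of Lemma~\ref{free} is harmless but unnecessary, since the density of loxodromic pairs is standard for non-elementary convergence groups.
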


\begin{proof}[Proof of Theorem \ref{upperbound-dim}] We work as in the proof of \cite{CanaryTsouvalas}. Let us fix $x_0\in \partial(\Gamma, \mathcal{P})$, a vector $v\in \mathbb{R}^d \smallsetminus \xi_{\rho}^{d-k}(x_0)$ and consider the map $$f_{v}:\partial(\Gamma, \mathcal{P})\smallsetminus \{x_0\}\rightarrow \mathbb{P}\big(\xi_{\rho}^{d-k}(x_0)\oplus \mathbb{R}v\big) \smallsetminus \mathbb{P}(\xi_{\rho}^{d-k}(x_0))\cong \mathbb{R}^{d-k}$$ defined as follows: $$f_{v}(x):=\big[\xi_{\rho}^{k}(x)\cap \big(\xi_{\rho}^{d-k}(x_0)\oplus \mathbb{R}v\big)\big].$$ Since $(\xi_{\rho}^{k},\xi_{\rho}^{d-k})$ are transverse, the map $f_{v}$ is well-defined, continuous, proper and injective. In particular, we immediately conclude that $\textup{dim}(\partial(\Gamma,\mathcal{P}))\leq d-k$.

Now let $\mathcal{E}_{\rho,k}:=\bigcup_{\eta\in \partial(\Gamma, \mathcal{P})}\mathbb{P}(\xi_{\rho}^{k}(\eta))$. First, observe that since $(\xi_{\rho}^{k},\xi_{\rho}^{d-k})$ are transverse and compatible, there is a fibration $\pi\colon \mathcal{E}_{\rho,k} \rightarrow \partial(\Gamma,\mathcal{P})$ with fibers are homeomorphic to $\mathbb{P}(\mathbb{R}^k)$. More explicitly, for a line $[u]\in \mathcal{E}_{\rho,k}$ $\pi([u])\in \partial(\Gamma, \mathcal{P})$ is the unique point $\eta_{u}\in \partial(\Gamma,\mathcal{P})$ such that $[u]\in \mathbb{P}(\xi_{\rho}^{k}(\eta_v))$.

In particular, if $\textup{dim}(\partial(\Gamma, \mathcal{P}))=d-k$, then, arguing  as in the proof of \cite[Thm.\ 1.3]{CanaryTsouvalas}, we conclude that the image of the map $f_{v}$ has to contain an open subset of the affine chart $\mathbb{P}\big(\xi_{\rho}^{d-k}(x_0)\oplus \mathbb{R}v\big) \smallsetminus \mathbb{P}(\xi_{\rho}^{d-k}(x_0))$ and thus, by Proposition \ref{KapovichBenakli-rel}, we have $\partial(\Gamma,\mathcal{P})\cong S^{d-k}$. In particular, $f_{v}$ is onto and therefore $\mathcal{E}_{\rho,k}$ contains $\mathbb{P}(\mathbb{R}^d)\smallsetminus \mathbb{P}(\xi_{\rho}^{d-k}(x_0))$. Thus, we have $\mathcal{E}_{\rho,k}=\mathbb{P}(\mathbb{R}^d)$, so we obtain a fibration $\mathbb{P}(\mathbb{R}^k)\xhookrightarrow{} \mathbb{P}(\mathbb{R}^d)\twoheadrightarrow S^{d-k}$. By the classification of such fibrations by Adams \cite{Adams}, we deduce that $(d,k)\in \{(2,1),(4,2),(8,4),(16,8)\}$.\end{proof}

\subsection{Excluding spheres in the boundary} \label{subsec:excluding spheres}
The upper bound for the cohomological dimension of the domain group of an Anosov representation provided by \cite[Thm.\ 1.3]{CanaryTsouvalas} is known to be achieved only in a few cases. The exceptional cases where the bound of Proposition \ref{upperbound-dim} is achieved are when either $k\in \{1,2,4\}$, see also the discussion in \cite[\S 10]{CanaryTsouvalas}. The case where $k=1$ is achieved when $\Gamma$ is a lattice in $\mathsf{SO}(d,1)$. The case where $k=2$ is achieved is when $\Gamma$ is a lattice in $\mathsf{SU}(d,1)$ and we restrict the natural embedding $\mathsf{SL}_{d+1}(\mathbb{C})\xhookrightarrow{} \mathsf{SL}_{2d+2}(\mathbb{R})$ on $\Gamma<\mathsf{SU}(d,1)$. Finally, the case where $k=4$ is achieved when $\Gamma$ is a lattice in $\Sp(d,1)$ and we restrict the natural embedding $\mathsf{SL}_{d+1}(\mathbb{H})\xhookrightarrow{} \mathsf{SL}_{4d+4}(\mathbb{R})$ on $\Gamma<\Sp(d,1)$. 

In each of these exceptional cases, we see that the Bowditch boundary is a $(d-k)$-sphere. More generally, in the non-exceptional cases, we see that the Bowditch boundary has dimension at most $d-k-1$. It is then natural to ask if the boundary embeds into a sphere of dimension $d-k-1$. Note that when $k=1$ and $\partial(\Gamma,\Pc)$ is connected and not a circle, Proposition \ref{prop:rel Anosov preserving domain} gives a positive answer. More generally, this question remains open, even in the non-relative case (see Question \ref{embedding-spheres}).

Theorem~\ref{thm:excluding spheres} offers some further evidence for a positive answer to this question. 
It will be a consequence of the following more general theorem, since the limit maps associated to a relatively Anosov representation yield a transverse subset. We remark that the more general theorem also applies to e.g.\ $\Psf_k$-transverse subgroups of $\SL_d(\Kb)$, as defined by Canary--Zhang--Zimmer~\cite{CZZ}.

\begin{theorem} \label{thm:excluding spheres general}
Let $d,k\in \Nb$ with $d\geq 2k+1$ and $\mathbb{K}=\mathbb{R}$ or $\mathbb{C}$. 
There is no transverse set 
$$
\Lambda \subset \Gr_k(\Kb^d) \times \Gr_{d-k}(\Kb^d)
$$
which properly contains a sphere of dimension $r_{\Kb}(d-k)-1$ \textup{(}where $r_{\Rb}=1$ and $r_{\Cb}=2$\textup{)}. 
\end{theorem}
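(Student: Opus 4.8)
The plan is to argue by contradiction, supposing $\Lambda \subset \Gr_k(\Kb^d) \times \Gr_{d-k}(\Kb^d)$ is transverse and properly contains a sphere $S$ of dimension $r_{\Kb}(d-k)-1$. Transversality means that for distinct $(V,W),(V',W') \in \Lambda$ we have $\Kb^d = V \oplus W'$; in particular the $\Gr_k$-component $\pi_1$ of $\Lambda$ is injective on $S$, so $\pi_1(S)$ is still an $(r_\Kb(d-k)-1)$-sphere sitting inside $\Gr_k(\Kb^d)$, and the full $\Gr_{d-k}$-component of any point of $\Lambda$ is transverse (in the $k$-plane sense) to every $\pi_1(W)$ for $W \in S$. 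First I would fix a point $p=(V_0,W_0) \in \Lambda \setminus S$. Then, exactly as in the map $f_v$ built in the proof of Theorem \ref{upperbound-dim}, I would use $W_0$ together with a generic line to define a continuous injective (hence, by invariance of domain, open onto its image) map from $\pi_1(S \setminus \{\text{one point}\})$, or rather from the relevant affine piece, into an affine chart $\Kb^d / W_0 \cong \Kb^{k}$-type space; more precisely the natural object is the map sending $V \in \pi_1(S)$ to $V \cap (W_0 \oplus \Kb v)$-style intersections, landing in a $\Kb$-projective space of real dimension $r_\Kb(d-k)$. The point is that an $(r_\Kb(d-k)-1)$-sphere mapping injectively and continuously into an $r_\Kb(d-k)$-dimensional manifold, with a puncture, is a proper map of the punctured sphere (an open $(r_\Kb(d-k)-1)$-ball) — no, the dimensions there are off by one for an invariance-of-domain argument, so the real mechanism must be different.

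The correct mechanism, which I would carry out, is the following. Consider the "tautological bundle" $\Ec := \bigcup_{(V,W) \in S} \Pb_\Kb(V)$ over $S$, with its projection $\pi\colon \Ec \to S$ whose fibers are copies of $\Pb_\Kb(\Kb^k)$; this is well-defined because transversality forces the $V$-components over $S$ to be pairwise transverse to the $W$-components, so distinct points of $S$ carry $k$-planes that are in "general position" enough that the fiber over a point is unambiguous (a line in some $V$ lies in $V$ for exactly one point of $S$, by transversality with the $W$'s). Now using the point $p=(V_0,W_0)\in\Lambda\setminus S$ and its transversality to all of $S$, I would construct a section-like map or a gluing that extends $\Ec$ to a bundle over the full sphere $S^{r_\Kb(d-k)}$ obtained by compactifying an affine chart determined by $W_0$: the map $f$ from the proof of Theorem \ref{upperbound-dim} identifies $S$ with an open subset of $\Pb_\Kb(W_0 \oplus \Kb v)$-type chart, of real dimension $r_\Kb(d-k)$, and if $S$ is a sphere of one dimension less it is the boundary of an embedded ball, filling in to give a sphere of dimension $r_\Kb(d-k)$. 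Over this $S^{r_\Kb(d-k)}$ we obtain a fiber bundle with fiber $\Pb_\Kb(\Kb^k)$ inside $\Pb_\Kb(\Kb^d)$, i.e. a fibration $\Pb_\Kb(\Kb^k) \hookrightarrow \Pb_\Kb(\Kb^d) \twoheadrightarrow S^{r_\Kb(d-k)}$, and then Adams's theorem on Hopf-invariant-one / such projective-space fibrations \cite{Adams} forces $r_\Kb(d-k) \in \{1,2,4,8\}$ together with the matching constraint on $k$ — but the \emph{properly contains} hypothesis, i.e. the existence of the extra point $p \notin S$, is precisely what fails in the exceptional Hopf cases, giving the contradiction.

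The main obstacle I anticipate is making the topological extension step rigorous: showing that the bundle $\Ec \to S$ over the $(r_\Kb(d-k)-1)$-sphere genuinely extends to a bundle over the filled-in $r_\Kb(d-k)$-sphere, and that this extended bundle sits inside $\Pb_\Kb(\Kb^d)$ as a genuine fibration to which Adams's classification applies. This requires (a) checking that the fibration $\pi\colon\Ec\to S$ is locally trivial — which should follow from continuity of the limit-type data and properness of the transversality, as in the fibration $\pi\colon \mathcal{E}_{\rho,k}\to\partial(\Gamma,\mathcal{P})$ of Theorem \ref{upperbound-dim}; (b) using the puncture point $p$ and the map $f_v$ to see $S$ as the boundary sphere of an $r_\Kb(d-k)$-ball in an affine chart, exactly parallel to how the image of $f_v$ was shown to contain an open set in Theorem \ref{upperbound-dim}; and (c) arguing that over the interior ball the bundle extends (a ball is contractible, so any bundle over it is trivial, and one must match the trivialization on the boundary — here one uses that $p$ provides a coherent complement $W_0$ transverse to everything, trivializing $\Ec$ near the "center"). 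Once the fibration $\Pb_\Kb(\Kb^k)\hookrightarrow\Pb_\Kb(\Kb^d)\twoheadrightarrow S^{r_\Kb(d-k)}$ is in hand, Adams's theorem plus the numerical constraint $d \geq 2k+1$ (which excludes $d=2k$, the only case compatible with the exceptional fibrations when the sphere has the right dimension) closes the argument; the real-versus-complex bookkeeping via $r_\Kb$ is routine once one views $\Gr_k(\Cb^d)$ and $\Pb_\Cb(\Cb^d)$ through their underlying real structures.
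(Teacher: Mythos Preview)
Your proposal has a genuine gap at the ``filling in'' step, and the approach does not go through as stated.

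The Adams argument in Theorem~\ref{upperbound-dim} works because the hypothesis there is that the boundary has dimension $d-k$: then $f_v$ maps a $(d-k)$-dimensional space into $\Rb^{d-k}$, invariance of domain gives an open image, and one deduces that $\Ec_{\rho,k}$ is \emph{all} of $\Pb(\Rb^d)$, producing an honest fibration $\Pb(\Rb^k)\hookrightarrow \Pb(\Rb^d)\twoheadrightarrow S^{d-k}$ to which Adams applies. In the present theorem the sphere has dimension $d-k-1$, one less. The image of your map $f_v$ sits inside $\Rb^{d-k}$ as a codimension-one subset; the union $\bigcup_{(V,W)\in S}\Pb(V)$ is then a proper subset of $\Pb(\Rb^d)$ and carries no fibration of the full projective space. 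Your fix is to fill in the bounded component of $\Rb^{d-k}\smallsetminus f_v(S)$ by a ball and extend the bundle over it, but the resulting total space is then just some abstract $\Pb(\Rb^k)$-bundle over $S^{d-k}$, not $\Pb(\Rb^d)$: over interior points of the ball there are no $k$-planes coming from $\Lambda$, so you have lost the embedding into $\Pb(\Rb^d)$ that Adams requires. Such abstract bundles exist in abundance (they are classified by $\pi_{d-k-1}(\PGL_k(\Rb))$) and there is no contradiction. The sentence ``one uses that $p$ provides a coherent complement $W_0$ transverse to everything, trivializing $\Ec$ near the center'' does not repair this: $W_0$ trivializes $\Ec$ over $S$, but does not produce $k$-planes over the filled-in ball that assemble to cover $\Pb(\Rb^d)$.

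The paper's proof is entirely different and does not use Adams at all. After reducing $\Kb=\Cb$ to $\Kb=\Rb$ via $\Gr_k(\Cb^d)\subset \Gr_{2k}(\Rb^{2d})$, it uses the extra point $x_0\in\Lambda\smallsetminus f(S^m)$ (with $m=d-k-1$) to build explicit continuous maps on $S^m$ out of intersections and orthogonal projections relative to $\pi_k(x_0)$ and $\pi_{d-k}(x_0)$. In the case $d=2k+1$ with $k$ odd, a wedge-product function $\Hc\colon (S^k)^{(2)}\to\Rb^*$ is shown to satisfy $\Hc(y,x)=-\Hc(x,y)$ at a pair where an auxiliary map to $\Rb^k$ fails injectivity, contradicting non-vanishing. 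In the remaining cases the key is a perpendicularity identity $\langle \mathsf{R}_1(x)+v_1,\ \mathsf{F}_0(x)-\mathsf{F}_0(-x)\rangle=0$, which yields a homotopy between an odd (hence nonzero-degree) self-map of $S^m$ and either its antipode (forcing degree $0$ when $m=k$ is even) or a map whose surjectivity produces a vector simultaneously in $\pi_k(f(-y_0))$ and $\pi_{d-k}(x_0)$, violating transversality (when $d\geq 2k+2$). The mechanism is thus degree theory and explicit linear-algebraic constructions, not bundle classification.
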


We first state some notation we will use below. 
We denote by $$S^{r}:=\big\{(x_1,\dots,x_r)\in \Rb^{r+1}: x_1^2+x_2^2+\cdots+x_{r+1}^2=1 \big\}$$ the standard Euclidean sphere of dimension $r\geq 1$. Given a point $x\in S^r$,  $-x\in S^r$ denotes the antipodal point of $x$. 
We fix the standard inner product on $\Rb^d$. For a subspace $V\subset \Rb^{r+1}$, $V^{\perp}$ is the orthogonal complement of $V$. 
Let $d \geqslant 2$ and $(e_1,\ldots,e_d)$ be the canonical basis of $\mathbb{R}^d$. The Pl${\textup{\"u}}$cker embeddings $$\tau_{k}^{+}:\mathsf{Gr}_{k}(\mathbb{R}^d) \rightarrow \mathbb{P}(\wedge^k \mathbb{R}^d) \quad\mbox{and}\quad \tau_{k}^{-}:\mathsf{Gr}_{d-k}(\mathbb{R}^d) \rightarrow \mathsf{Gr}_{d_k-1}(\wedge^{k}\mathbb{R}^d),$$ where $d_k=\binom{d}{k}$, are the maps defined as follows: for any $g\in \mathsf{GL}_d(\mathbb{R})$, 
$$\tau_{k}^{+}\big(g\langle e_1,\ldots,e_k\rangle\big)=\big[ge_1\wedge \ldots \wedge ge_k\big] \quad\mbox{and}\quad \tau^{-}_{k}\big(g\langle e_{k+1},\ldots,e_d\rangle \big)=\big[(\wedge^{k}g)W_{k}\big],$$ where $W_k:=\big\langle \left\{e_{i_1}\wedge \ldots \wedge e_{i_{k}}: \{i_1,\ldots,i_{k}\} \neq \{1,\ldots,k\} \right\} \big\rangle$. The maps $\tau_k^{+}$ and $\tau_{k}^{-}$ are embeddings.

There will be several cases, depending on $d$ and $k$; in all cases, we will argue by contradiction. The following case is more straightforward, and we will reuse the main idea in its proof in later results.

\begin{lemma} \label{lem:thm5.3 case 3 general}
Suppose $d=2k+1$ and $k \in \mathbb{N}$ is odd. Suppose $\Lambda \subset \Gr_k(\Rb^d) \times \Gr_{d-k}(\Rb^d)$ is a transverse set. Then $\Lambda$ cannot properly contain a $k$-sphere. 
\end{lemma}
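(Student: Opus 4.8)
I would argue by contradiction, and the crucial input is a point of $\Lambda$ \emph{off} the sphere. Suppose $\Sigma\subset\Lambda$ is homeomorphic to $S^{k}$; since the containment is proper, fix $q=(V_q,W_q)\in\Lambda\smallsetminus\Sigma$. Writing each $x\in\Sigma$ as a pair $(V(x),W(x))\in\Gr_k(\Rb^d)\times\Gr_{d-k}(\Rb^d)$ with $V(x)\subset W(x)$ (compatibility), transversality of $\Lambda$ records two families of conditions: among sphere points, $V(x)\cap W(y)=0$ for $x\neq y$ (so in particular $V(x)\cap V(y)=0$); and against the external point, $V(x)\cap W_q=0$ and $V_q\cap W(x)=0$ for \emph{every} $x\in\Sigma$. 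It is exactly these last relations involving $q$ that will produce the obstruction; a transverse sphere on its own does not.

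First I would package the data as an embedded hypersurface. The assignment $x\mapsto V(x)$ is a tautological rank-$k$ bundle $E\to\Sigma$, and the relation $V(x)\cap W_q=0$ (for all $x$) means the projection $\Rb^d\to\Rb^d/W_q$ restricts to a fibrewise isomorphism on $E$; hence $E$ is trivial and its unit sphere bundle satisfies $S(E)\cong S^{k-1}\times S^{k}$. The map $S(E)\to S^{d-1}=S^{2k}$, $(x,v)\mapsto v$, is injective (if $v\in V(x)\cap V(y)$ with $x\neq y$ then $v=0$), so it is a closed embedding onto $\bigcup_{x\in\Sigma}S(V(x))$, of codimension one; it is invariant under the antipodal involution $\alpha$ of $S^{2k}$; and it is disjoint from the round subsphere $S(W_q)\cong S^{k}$ precisely because $V(x)\cap W_q=0$. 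Moreover $S(V_q)\cong S^{k-1}$ is also disjoint from $S(E)$, since $V_q\cap V(x)\subset V_q\cap W(x)=0$, and $S(V_q)\subset S(W_q)$ by compatibility.

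The heart of the proof is then a free-involution/separation argument. For $k\geq 2$ the set $S(E)$ is a closed connected codimension-one submanifold of $S^{2k}$ with orientable (indeed trivial) normal bundle, since $w(S^{k-1}\times S^{k})=1$; hence it separates $S^{2k}$ into two open regions $A,B$, which $\alpha$ either exchanges or preserves. Which alternative occurs is governed by a mod-$2$ self-intersection computation: passing to the quotient $Z:=S(E)/\alpha=\proj(E)\subset\proj(\Rb^{d})=\proj(\Rb^{2k+1})$, one determines whether $Z$ separates $\proj(\Rb^{2k+1})$ by computing $w_1(\nu_Z)$ through the identity $i^{*}\mathrm{PD}[Z]=w_1(\nu_Z)$, using $w(TZ)=(1+b)^{k}$, $w(T\proj(\Rb^{2k+1}))|_{Z}=(1+b)^{2k+1}$ and $i^{*}a=b$ (where $a,b$ generate the relevant mod-$2$ cohomologies). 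This is exactly where the parity of $k$ is decisive. The resulting alternative is then played against the \emph{probes} $S(W_q)\supset S(V_q)$: each is a nonempty connected $\alpha$-invariant subset of $S^{2k}\smallsetminus S(E)$, and for $k$ odd the parity computation makes their simultaneous placement (one nested inside the other, both avoiding $S(E)$, both respected by $\alpha$) impossible, giving the contradiction.

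The \textbf{main obstacle} is this last computation: correctly pinning down, as a function of the parity of $k$, whether $\alpha$ exchanges the two complementary regions, and then checking that the odd case is incompatible with the probes $S(W_q)\supset S(V_q)$. I would also need to treat the degenerate case $k=1$ separately, where $S(E)\cong S^{0}\times S^{1}$ is disconnected and ``separation into two regions'' fails; there the argument reverts to its planar model, which is the idea to be reused later: the curve $x\mapsto V(x)$ lies in an affine chart (it avoids $\proj(W_q)$), its supporting lines $\proj(W(x))$ meet it only at $V(x)$ and so sweep out every direction at infinity, forcing $\proj(V_q)\subset\proj(W_q)$ onto some $\proj(W(x))$ and contradicting $V_q\cap W(x)=0$. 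The higher-dimensional, parity-sensitive analogue of this ``the supporting hyperplanes must hit the external flag'' phenomenon is precisely what the self-intersection computation encodes.
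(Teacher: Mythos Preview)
Your setup is correct and the approach is genuinely different from the paper's, but the crucial final step does not close, and in fact the parity goes the wrong way.

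Carrying out your Stiefel--Whitney computation: with $Z=\proj(E)\cong\mathbb{RP}^{k-1}\times S^k$ and $i^*a=b$, one gets $w(\nu_Z)=(1+b)^{2k+1}/(1+b)^k=(1+b)^{k+1}$, hence $w_1(\nu_Z)=(k+1)b$. For $k$ odd this vanishes, so (since $i^*$ is injective on $H^1$ when $k\ge 2$) $Z$ separates $\mathbb{RP}^{2k}$ and the antipodal map $\alpha$ \emph{preserves} the two regions $A,B$. Then your $\alpha$-invariant connected probe $S(W_q)$ sits happily in one region, and $S(V_q)\subset S(W_q)$ adds nothing because it bounds a disk in $S(W_q)$ and is homologically trivial in the complement. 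No contradiction arises. It is precisely for $k$ \emph{even} that $w_1(\nu_Z)=b\neq 0$, $Z$ fails to separate, $\alpha$ swaps $A$ and $B$, and the $\alpha$-invariant connected $S(W_q)$ yields a contradiction --- so your machinery proves the complementary case, not the lemma as stated.

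The underlying reason is that your argument never uses the $(k{+}1)$-planes $W(x)$ attached to the sphere points in any essential way (the one appearance, $V_q\cap W(x)=0\Rightarrow V_q\cap V(x)=0$, already follows from $V(x)\cap W_q=0$ and $V_q\subset W_q$). The paper's proof for $k$ odd is both shorter and hinges exactly on these: one takes the continuous map
\[
\mathsf F_1\colon S^k\to \proj(W_q)\smallsetminus\proj(V_q)\cong\Rb^k,\qquad \mathsf F_1(x)=[\,W(x)\cap W_q\,],
\]
lifts it affinely, and simultaneously lifts $x\mapsto V(x)$ to $\widetilde\xi_k\colon S^k\to\mathbb S(\wedge^k\Rb^d)$ using that the Pl\"ucker image avoids the hyperplane determined by $W_q$. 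The top-form
\[
\mathcal H(x,y):=\widetilde\xi_k(x)\wedge\widetilde\xi_k(y)\wedge\widetilde{\mathsf F}_1(x)
\]
is continuous and nonvanishing by transversality. Since $\mathsf F_1$ is a map $S^k\to\Rb^k$ it is not injective, so there exist $x_1\neq y_1$ with $\widetilde{\mathsf F}_1(x_1)=\widetilde{\mathsf F}_1(y_1)$; swapping the two $k$-vectors then gives $\mathcal H(y_1,x_1)=(-1)^{k^2}\mathcal H(x_1,y_1)=-\mathcal H(x_1,y_1)$ for $k$ odd, contradicting nonvanishing. This is the ``reusable idea'' alluded to later; your $k=1$ sketch is essentially its planar instance (and even there the claim that the lines $\proj(W(x))$ ``sweep out every direction at infinity'' needs the non-injectivity/degree argument rather than an unjustified convexity assertion).
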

\begin{proof}
We argue by contradiction.
Suppose that there is a transverse set $\Lambda \subset \Gr_k(\Rb^d) \times \Gr_{d-k}(\Rb^d)$, an embedding $f \colon S^{k} \into \Lambda$ and $x_0\in \Lambda \smallsetminus f(S^{k})$.

Let $\pi_k \colon \Lambda \to \Gr_k(\Rb^d)$ and $\pi_{k+1} \colon \Lambda \to \Gr_{k+1}(\Rb^d)$ be the natural projections. By transversality, these are homeomorphisms.

Note that since $x_0\in \partial(\Gamma,P) \smallsetminus f(S^{k})$, by transversality, the map $\tau_{k}^{+}\circ \pi_k \circ f\colon S^{k}\to \proj(\wedge^{k}\Rb^d)$ avoids the hyperplane $\tau_{k}^{-}(\pi_{d-k}(x_0))\subset \wedge^k \mathbb{R}^{2k+1}$. Thus, we obtain a continuous lift $\widetilde{\xi}_k\colon S^{k}\to \mathbb{S}(\wedge^k \Rb^d)$ of $\tau_{k}^{+}\circ \pi_k \circ f$, where $\mathbb{S}(\cdot)$ denotes the unit sphere in the vector space. In addition, the continuous map $\mathsf{F}_{1}\colon S^{k}\to \proj(\pi_{k+1}(x_0))\smallsetminus \proj(\pi_k(x_0))$ defined by $$\mathsf{F}_1(x)=\big[\pi_{k+1}(f(x))\cap \pi_{k+1}(x_0)\big]$$ admits a continuous lift, i.e.\ there is $v_0\in \pi_{k+1}(x_0)\smallsetminus \pi_{k}(x_0)$ and a continuous map $\mathsf{F}_1'\colon S^{k}\to \pi_{k}(x_0)\cong \Rb^k$ such that $[\mathsf{F}_1'(x)+v_0]=[\mathsf{F}_1(x)]$ for every $x\in S^{k}$.

By transversality for $x,y\in S^{k}$ distinct we have that $\pi_{k+1}(f(x))=\pi_{k}(f(x))\oplus \mathbb{R} \mathsf{F}_1(x)$ and $$\Rb^d=\pi_{k}(f(x))\oplus \pi_{k}(f(y))\oplus \mathbb{R} \mathsf{F}_1(x),$$ so we obtain a continuous, non-vanishing map $\mathcal{H}\colon (S^k)^{(2)}\to \Rb^{\ast}$ given by $$\mathcal{H}(x,y):=\widetilde{\xi}_k(x)\wedge \widetilde{\xi}_k(y) \wedge \big( \mathsf{F}_1'(x)+v_0\big).$$ 

Now $\mathsf{F}_1'$ is a continuous map from the $k$-sphere to an affine chart in projective $k$-space, and hence cannot be injective. Hence there are $x_1,y_1\in S^{k}$ distinct with $\mathsf{F}_1'(x_1)=\mathsf{F}_1'(y_1)$. But then, since $k\in \mathbb{N}$ is odd, we have \begin{align*}\mathcal{H}(y_1,x_1)&=\widetilde{\xi}_k(y_1)\wedge \widetilde{\xi}_k(x_1) \wedge \big( \mathsf{F}_1'(y_1)+v_0\big)\\
&=(-1)^{k^2}\widetilde{\xi}_k(x_1)\wedge \widetilde{\xi}_k(y_1) \wedge \big( \mathsf{F}_1'(x_1)+v_0\big)\\ &=-\mathcal{H}(x_1,y_1).\end{align*} This contradicts the fact that $\Hc$ is a non-zero continuous map. 
\end{proof}

\begin{proof}[Proof of Theorem \ref{thm:excluding spheres general}]
We will prove the statement when $\Kb = \Rb$. The case where $\mathbb{K}=\mathbb{C}$ then  follows by the real case, since this says there is no transverse subset of $\Gr_{2k}(\Rb^{2d}) \times \Gr_{2d-2k}(\Rb^{2d})$ properly containing a $(2d-2k-1)$-sphere, and $\Gr_k(\Cb^d) \subset \Gr_{2k}(\Rb^{2d})$.

We argue by contradiction.

Set $m=m(d,k):=d-k-1$. Suppose there is a transverse subset $\Lambda \subset \mathsf{Gr}_k(\mathbb{R}^d)\times \mathsf{Gr}_{d-k}(\mathbb{R}^d)$ and an embedding $f \colon S^{m} \into \Lambda$ and $x_0\in \Lambda \smallsetminus f(S^{m})$. 
If $d=2k+1$ and $k$ is odd, then we are done by Lemma~\ref{lem:thm5.3 case 3 general}.

The proof of the other cases will be more involved.

Let $W_0:=\pi_{d-k}(x_0)\cap (\pi_k(x_0))^\perp$. We have $\dim(W_0)=d-2k$. Let us also fix non-zero vectors $v_1\in W_0$ and $\omega_1\in \pi_{k}(x_0)$. Consider the  maps 
\begin{align*} 
\mathsf{F} &\colon S^{m}\to \proj\big((\pi_k(x_0))^\perp \oplus \Rb\omega_1\big)\smallsetminus \proj\big( (\pi_k(x_0))^\perp \big),\\ 
\mathsf{G} &\colon S^{m}\to \proj\big(\pi_{k}(x_0)\oplus \Rb v_1\big)\smallsetminus \proj(\pi_{k}(x_0)),\\ 
\mathsf{H} &\colon S^{m}\to \proj(\Rb^d)
\end{align*} 
defined as follows: for all $x \in S^m$,
\begin{align*}
\mathsf{F}(x)&:= (\pi_{d-k}(f(x)))^\perp \cap \big((\pi_k(x_0))^\perp \oplus \Rb\omega_1\big)\big], \\ \mathsf{G}(x)&:=\big[\pi_{d-k}(f(x)) \cap \big(\pi_{k}(x_0)\oplus \Rb v_1 \big)\big] ,\\ \mathsf{H}(x)&:=\big[\pi_{d-k}(f(x)) \cap \big(\pi_{k}(f(-x))\oplus \mathsf{G}(-x)\big)\big].
\end{align*}

Note that $\mathsf{F} \colon S^m \rightarrow \mathbb{P}(\xi_{\rho^{\ast}}^{d-k}(x_0)\oplus \mathbb{R}\omega_1)$ and $\mathsf{G} \colon S^m \to \mathbb{P}(\pi_k(x_0)\oplus \mathbb{R}v_1)$ are well-defined and continuous by transversality. In addition, $\mathsf{H}:S^m\rightarrow \mathbb{P}(\mathbb{R}^d)$ is well-defined and continuous since $\pi_{d-k}(f(x))\oplus \pi_{k}(f(-x))=\Rb^d$ for every $x\in S^{m}$ and $\mathsf{G}(-x)\in \pi_{k}(x_0)\oplus \Rb v_1\subset \pi_{d-k}(x_0)$ is transverse to $\pi_{k}(f(-x))$ since $x_0\in \partial(\Gamma, \mathcal{P})\smallsetminus f(S^{m})$. Observe also that there are continuous functions $\mathsf{F}_{0}\colon S^{m}\to (\pi_k(x_0))^\perp$ and $\mathsf{G}_0\colon S^{m}\to \pi_{k}(x_0)$ such that \begin{align*}\widetilde{\mathsf{F}}(x)&=\mathsf{F}_0(x)+\omega_1, \ x\in S^{m}\\ \widetilde{\mathsf{G}}(x)&=\mathsf{G}_0(x)+v_1, \ x\in S^{m}\end{align*} are continuous lifts of $\mathsf{F}$ and $\mathsf{G}$ respectively. 

Now we are going to construct a lift for the map $\mathsf{H}$. First, note that for every $x\in S^{m}$ a lift of $\mathsf{H}(x)\in \mathbb{P}(\pi_{d-k}(f(x)))$ can be chosen to be of the form $\mathsf{R}(x)+\widetilde{\mathsf{G}}(-x)$ for some $\mathsf{R}(x)\in \pi_{k}(f(-x))$, since $\pi_{k}(f(-x))\cap \pi_{d-k}(f(x))=\{\bf 0\}$. Thus, there are maps $\widetilde{\mathsf{H}}\colon S^{m}\to \Rb^d\smallsetminus \{\bf 0\}$ and $\mathsf{R}\colon S^{m}\to \Rb^d$ with the property that $$\widetilde{\mathsf{H}}(x)=\mathsf{R}(x)+ \mathsf{G}_0(-x)+v_1,\ x\in S^{m}$$ is a lift of $\mathsf{H}$, $\widetilde{\mathsf{H}}(x)\in \pi_{d-k}(f(x))$ and $\mathsf{R}(x)\in \pi_{k}(f(-x))$ for every $x\in S^{m}$.
\medskip

\noindent {\em Claim: The maps $\widetilde{\mathsf{H}}$ and $\mathsf{R}$ are well-defined and continuous.} 
If $\mathsf{R}'(x),\mathsf{R}'(x)\in \pi_{k}(f(-x))$ are two vectors such that $[\mathsf{R}(x)+\mathsf{G}_0(-x)+v_1] = [\mathsf{R}'(x)+\mathsf{G}_0(-x)+v_1]=\mathsf{H}(x)$,
then $$\mathsf{R}(x)-\mathsf{R}'(x)\in \pi_{k}(f(-x))\cap \pi_{d-k}(f(x))=\{\bf 0\}$$ and hence $\mathsf{R}(x)=\mathsf{R}'(x)$. It follows that $\mathsf{R},\widetilde{\mathsf{H}}:S^m \rightarrow \mathbb{R}^d$ are both well-defined.

Now in order to prove that $\mathsf{R}\colon S^m \rightarrow \mathbb{R}^d$ is continuous we first observe that the map $x\mapsto \left\|\mathsf{R}(x)\right\|$ is bounded. If not, there is a sequence $(y_n)_{n\in \mathbb{N}}\subset S^{m}$ with $\lim_n y_n=y_0$ and $\lim_n ||\mathsf{R}(y_n)||=\infty$. Since $x\mapsto \left\|{\mathsf{G}}_0(x)+v_1\right\|$ is continuous and bounded, up to a passing to a subsequence, we have \begin{equation}\label{equalitylim} \lim_{n\rightarrow \infty} \frac{\mathsf{R}(y_n)}{||\mathsf{R}(y_n)||}=\lim_{n \rightarrow \infty} \frac{\widetilde{\mathsf{H}}(y_n)}{||\mathsf{R}(y_n)||},\end{equation} where both limits exist and are non-zero vectors of $S^{d-1}$. Moreover, since $\frac{1}{||\mathsf{R}(y_n)||}\mathsf{R}(y_n) \in \pi_{k}(f(-y_n))$ for every $n\in \mathbb{N}$, we have $\lim_n \frac{1}{||\mathsf{R}(y_n)||}\mathsf{R}(y_n) \in \mathbb{S}(\pi_{k}(f(-y_0)))$. Similarly, $\frac{1}{||\mathsf{R}(y_n)||}\widetilde{\mathsf{H}}(y_n) \in \pi_{d-k}(f(y_n))$ for every $n$, so $\lim_n \frac{1}{||\mathsf{R}(y_n)||}\widetilde{\mathsf{H}}(y_n) \in \mathbb{S}(\pi_{d-k}(f(y_0)))$. However, by (\ref{equalitylim}), this contradicts that by transversality we have $\pi_{k}(f(-y_0))\cap \pi_{d-k}(f(y_0))=\{\bf 0\}$. It follows that the map $x\mapsto \mathsf{R}(x)$ is bounded. 

 Now it is a formality to check that $\mathsf{R}$ and $\widetilde{\mathsf{H}}$ are continuous. Let $(z_n)_{n\in \mathbb{N}}\subset S^{m}$ be an arbitrary sequence with $\lim_n z_n=z_0$ and $\mathsf{R}_{\infty}\in \pi_{k}(f(-z_0))$ an accumulation point of $(\mathsf{R}(z_n))_{n\in \mathbb{N}}$. Then, note that $\mathsf{R}_{\infty}+\mathsf{G}_0(-z_0)+v_1\neq 0$ is a lift of $\mathsf{H}(z_0)\in \mathbb{P}(\pi_{d-k}(f(z_0)))$. Hence there is $\lambda \in \Rb^{\ast}$ such that $$\mathsf{R}_{\infty}+\mathsf{G}_0(-z_0)+v_1=\lambda \mathsf{R}(z_0)+\lambda(\mathsf{G}_0(-z_0)+v_1).$$ Now note that $\mathsf{R}_{\infty}\in \pi_{k}(f(-z_0))$, $\mathsf{R}(z_0)\in \pi_{k}(f(-z_0))$, $\mathsf{G}_0(-z_0)+v_1\in \pi_{k}(x_0)\oplus \Rb v_1$ and $$\pi_{k}(f(-z_0))\cap (\pi_{k}(x_0)\oplus \Rb v_1)\subset \pi_{k}(f(-z_0))\cap \pi_{d-k}(x_0)=\{\bf 0\},$$ so we have $\lambda=1$ and $\mathsf{R}_{\infty}=\mathsf{R}(z_0)$. Finally, the claim follows and $\widetilde{\mathsf{H}}$ and $\mathsf{R}$ are well-defined and continuous. \qed \medskip

Now by using the orthogonal decomposition 
\begin{equation} \label{perp1} 
\Rb^d=\pi_{k}(x_0)\oplus (\pi_k(x_0))^\perp
\end{equation} we may write, for any $x \in S^m$, \begin{equation} \label{perpR} 
\mathsf{R}(x)=\underbrace{\mathsf{R}_1(x)}_{\in (\pi_k(x_0))^\perp}+\underbrace{\mathsf{R}_2(x)}_{\in \pi_{k}(x_0)}
\end{equation}  where $\mathsf{R}_{1} \colon S^{m} \to (\pi_k(x_0))^\perp$ and $\mathsf{R}_2\colon S^{m} \to \pi_{k}(x_0)$ are continuous functions. In particular, we obtain the decomposition for the map $\widetilde{\mathsf{H}}$:
\begin{equation} \label{perpH} \widetilde{\mathsf{H}}(x)=\mathsf{R}(x)+\underbrace{\mathsf{G}_0(-x)}_{\in \pi_{k}(x_0)}+\underbrace{v_1}_{\in (\pi_k(x_0))^\perp}=\underbrace{\mathsf{R}_1(x)+v_1}_{\in (\pi_k(x_0))^\perp}+ \underbrace{\mathsf{R}_2(x)+\mathsf{G}_0(-x)}_{\in \pi_{k}(x_0)} \quad \forall\ x\in S^{m}.\end{equation}
Also recall that \begin{equation}\label{perpF} \widetilde{\mathsf{F}}(x)=\underbrace{\mathsf{F}_0(x)}_{\in (\pi_k(x_0))^\perp}+ \underbrace{\omega_1}_{\in \pi_{k}(x_0)} \quad\forall \ x\in S^{m}.\end{equation} 

Since $\widetilde{\mathsf{H}}(x)\in \pi_{d-k}(f(x))\cap \pi_{d-k}(f(-x))$, we have that the vector $\widetilde{\mathsf{F}}(\pm x)\in \mathbb{R}^d$ is perpendicular to $\widetilde{\mathsf{H}}(x)\in \mathbb{R}^d$ for every $x\in S^m$. In other words, by (\ref{perp1}) and (\ref{perpH}), (\ref{perpF}) we have that 
$$\big \langle \mathsf{R}_1(x)+v_1, \mathsf{F}_0(x)\big \rangle+ \big \langle \mathsf{R}_2(x)+\mathsf{G}_0(-x), \omega_1\big \rangle=0, $$ $$\big \langle \mathsf{R}_1(x)+v_1, \mathsf{F}_0(-x)\big \rangle+ \big \langle \mathsf{R}_2(x)+\mathsf{G}_0(-x), \omega_1\big \rangle=0 $$ for all $x\in S^{m}$ and in particular  $$\big \langle \mathsf{R}_1(x)+v_1, \mathsf{F}_0(x)-\mathsf{F}_0(-x)\big \rangle=0, \ x\in S^{m}.$$ 

We remark that, by (\ref{perpH}), $\mathsf{R}_1(x)+v_1\neq0$ since $\widetilde{\mathsf{H}}(x)\in \pi_{d-k}(f(x))\subset \mathbb{R}^d\smallsetminus \pi_{k}(x_0)$. Also, the map $\mathsf{F}_0\colon S^{m}\to (\pi_k(x_0))^\perp$ is injective since $\pi_k$ and hence $\mathsf{F}$ is injective. Finally, we obtain the equality \begin{equation}\label{perpRF} \Bigg \langle \frac{\mathsf{R}_1(x)+v_1}{\left\|\mathsf{R}_1(x)+v_1\right\|}, \mathsf{F}_0'(x)\Bigg \rangle=0 \quad\forall x\in S^{m},  \end{equation} where $\mathsf{F}_0'\colon S^{m}\to \mathbb{S}((\pi_k(x_0))^\perp)$ is the continuous map $$\mathsf{F}_0'(x):=\frac{\mathsf{F}_0(x)-\mathsf{F}_0(-x)}{\left\|\mathsf{F}_0(x)-\mathsf{F}_0(-x)\right\|}$$ which is odd and hence has non-zero degree. Let us also fix $\sigma\colon \mathbb{S}((\pi_k(x_0))^\perp)\to S^{m}$ a homeomorphism sending antipodal pairs to antipodal pairs, i.e. $\sigma(-u)=-\sigma(u)$ for every $u\in \mathbb{S}((\pi_k(x_0))^\perp)$. By using (\ref{perpRF}) we obtain the following continuous map $A\colon S^m \times [-\pi,\pi] \to S^{m}$ defined by $$A(x,t):=\sigma \Bigg((\sin t) \frac{\mathsf{R}_1(x)+v_1}{\left\|\mathsf{R}_1(x)+v_1\right\|}+(\cos t) \mathsf{F}_0'(x)\Bigg).$$ 
We note that $A$ gives us the following homotopies:
\medskip

\noindent \textup{(a)} $A\colon S^m \times \left[ 0, \frac\pi2 \right]$ is a homotopy between the odd map $\sigma \circ \mathsf{F}_0'\colon S^{m}\to S^{m}$ and the map
$\sigma \circ \Omega\colon S^{m}\to S^{m}$, where $$\Omega(x)= \frac{\mathsf{R}_1(x)+v_1}{\left\|\mathsf{R}_1(x)+v_1 \right\|}\in \mathbb{S}(\xi_{\rho^{\ast}}^{d-k}(x_0))\ \ x\in S^{m}.$$
\noindent \textup{(b)} $A\colon S^m \times [0,\pi]$ is a homotopy between $x\mapsto A(x,0)=(\sigma \circ \mathsf{F}_0')(x)$ and $x\mapsto A(x,\pi)=\sigma(-\mathsf{F}_0'(x))=(\theta \circ \sigma \circ \mathsf{F}_0')(x)$, for every $x\in S^m$, where $\theta\colon S^{m}\to S^{m}$ is the antipodal map.
\medskip

Now there are two cases to consider to finish the proof.
\medskip

\noindent {\bf Case 1.} {\em Suppose that $d\geq 2k+2$ (so $m\geq k+1$).} Note that $\textup{dim}(W_0)\geq 2$, so we may choose a unit vector $v_2\in W_0:=\pi_{d-k}(x_0)\cap (\pi_k(x_0))^\perp$ linearly independent from $v_1\in W_0$. By (a) we see that $\sigma \circ \Omega$ is homotopic to the odd map $\sigma \circ \mathsf{F}_0'$, so $\sigma \circ \Omega$ has non-zero degree and hence is a surjective onto $S^{m}$. In particular, $\Omega:S^m \rightarrow \mathbb{S}((\pi_k(x_0))^\perp)$ is surjective and there is $y_0\in S^{m}$ such that $$\mathsf{R}_1(y_0)=\lambda_0 v_2-v_1, \text{ where } \lambda_0=\left\|\mathsf{R}_1(y_0)+v_1\right\|.$$
By (\ref{perpR}), for $x:=y_0$, we have that $$\mathsf{R}(y_0)=\lambda_0 v_2-v_1+\mathsf{R}_2(y_0).$$ Note that $\mathsf{R}(y_0)\neq {\bf 0}$ since $\lambda_0 v_2-v_1\in  (\pi_k(x_0))^\perp \smallsetminus \{\bf 0\}$ and $\mathsf{R}_2(w_0)\in \pi_{k}(x_0)$. On the other hand, since $\lambda_0 v_2-v_1\in (\pi_k(x_0))^\perp \cap \pi_{d-k}(x_0)$, we have $\mathsf{R}(y_0)\in \pi_{d-k}(x_0)$. However, this contradicts the fact that $\mathsf{R}(x)\in \pi_{k}(f(-x))$ for every $x\in S^m$, and $\pi_{k}(f(-y_0)) \cap \pi_{d-k}(x_0)=\{\bf 0\}$.
\medskip

\noindent {\bf Case 2.} {\em Suppose that $d=2k+1$ (so $m=k$) and $k\in \mathbb{N}$ is even.} Recall that $\sigma\circ F_0'\colon S^{k}\to S^{k}$ is odd and has non-zero degree. By (b), since $\sigma \circ \mathsf{F}_0'$ and $\theta \circ \sigma  \circ \mathsf{F}_0'$ are homotopic and the antipodal map $\theta:S^k\rightarrow S^k$ has degree $-1$, we have that $\textup{deg}(\sigma \circ \mathsf{F}_0')=\textup{deg}(\theta \circ \sigma \circ F_0')=-\textup{deg}(\sigma \circ \mathsf{F}_0')$, so $\textup{deg}(\sigma \circ \mathsf{F}_0')=0$. This is a contradiction.
\medskip

In either case we obtain a contradiction, and hence the proof of the theorem is complete.\end{proof}

By adapting the argument in Lemma~\ref{lem:thm5.3 case 3 general},
we obtain a positive answer to Question \ref{embedding-spheres}
in the case of $k$-Anosov representations into $\mathsf{SL}_{2k+1}(\mathbb{R})$ when $k\in \mathbb{N}$ is odd and the Gromov boundary $\partial_{\infty}\Delta$ is connected.

\begin{theorem} \label{embedding-sphere1} Let $\Delta$ be an one-ended hyperbolic group and $k\in \mathbb{N}$ an odd integer. Suppose that there is a $k$-Anosov representation $\rho\colon \Delta \to \SL_{2k+1}(\Rb)$. Then $\partial_{\infty}\Delta$ embeds in $S^k$.\end{theorem}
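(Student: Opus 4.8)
The plan is as follows. Since $\Delta$ is one-ended, $\partial_{\infty}\Delta$ is a non-degenerate, connected, locally connected, compact metrizable space with no global cut point (by Bestvina--Mess, Bowditch, and Swarup), and we may assume $\partial_{\infty}\Delta\not\cong S^1$, since otherwise $\Delta$ is virtually a surface group and $\partial_{\infty}\Delta\cong S^1$ already embeds in $S^k$. Fix a point $x_0\in\partial_{\infty}\Delta$; then $\partial_{\infty}\Delta\smallsetminus\{x_0\}$ is connected, non-compact, and (using that $\partial_{\infty}\Delta$ has no cut point and is not $S^1$) is not homeomorphic to an arc. Let $\xi^k,\xi^{k+1}$ be the limit maps of $\rho$ (here $d=2k+1$, so $\xi^{d-k}=\xi^{k+1}$); they are transverse and compatible, and $\xi^k$ is an embedding. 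Fixing $v_0\in\xi^{k+1}(x_0)\smallsetminus\xi^k(x_0)$ identifies the affine chart $\mathbb{P}(\xi^{k+1}(x_0))\smallsetminus\mathbb{P}(\xi^k(x_0))$ with $\{w+v_0:w\in\xi^k(x_0)\}\cong\Rb^{k}$. First I would check, using transversality and compatibility, that
\[
\mathsf{F}_1\colon\partial_{\infty}\Delta\smallsetminus\{x_0\}\longrightarrow\Rb^{k},\qquad \mathsf{F}_1(x)=\big[\xi^{k+1}(x)\cap\xi^{k+1}(x_0)\big],
\]
is well defined and continuous: transversality forces $\xi^{k+1}(x)\cap\xi^{k+1}(x_0)$ to be a line avoiding $\xi^k(x_0)$, and compatibility gives $\xi^{k+1}(x)=\xi^k(x)\oplus\big(\xi^{k+1}(x)\cap\xi^{k+1}(x_0)\big)$. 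Let $\mathsf{F}_1'\colon\partial_{\infty}\Delta\smallsetminus\{x_0\}\to\xi^k(x_0)$ be the continuous lift with $\mathsf{F}_1(x)=[\mathsf{F}_1'(x)+v_0]$.

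Next, since $x\neq x_0$ forces $\xi^k(x)\oplus\xi^{k+1}(x_0)=\Rb^{2k+1}$, the composition $\tau_k^{+}\circ\xi^k$ maps $\partial_{\infty}\Delta\smallsetminus\{x_0\}$ into the complement of the projective hyperplane $\tau_k^{-}(\xi^{k+1}(x_0))$ in $\mathbb{P}(\wedge^k\Rb^{2k+1})$; as $\partial_{\infty}\Delta\smallsetminus\{x_0\}$ is connected, this lifts to a continuous map $\widetilde{\xi}_k\colon\partial_{\infty}\Delta\smallsetminus\{x_0\}\to\mathbb{S}(\wedge^k\Rb^{2k+1})$. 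Exactly as in Lemma~\ref{lem:thm5.3 case 3 general}, I would then consider
\[
\mathcal{H}(x,y):=\widetilde{\xi}_k(x)\wedge\widetilde{\xi}_k(y)\wedge\big(\mathsf{F}_1'(x)+v_0\big)\in\wedge^{2k+1}\Rb^{2k+1}\cong\Rb,
\]
a continuous function on the space $(\partial_{\infty}\Delta\smallsetminus\{x_0\})^{(2)}$ of distinct ordered pairs. Transversality and compatibility give $\xi^k(x)\oplus\xi^k(y)\oplus\big(\xi^{k+1}(x)\cap\xi^{k+1}(x_0)\big)=\Rb^{2k+1}$ for $x\neq y$, so $\mathcal{H}$ never vanishes; and since $\partial_{\infty}\Delta\smallsetminus\{x_0\}$ is a connected, locally connected metrizable space which is not an arc, the pair space $(\partial_{\infty}\Delta\smallsetminus\{x_0\})^{(2)}$ is connected, so $\mathcal{H}$ has constant sign.

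The heart of the proof is that $\mathsf{F}_1$ is injective. If $\mathsf{F}_1(x_1)=\mathsf{F}_1(y_1)$ with $x_1\neq y_1$, then $\mathsf{F}_1'(x_1)=\mathsf{F}_1'(y_1)$, and since $k$ is odd,
\[
\mathcal{H}(y_1,x_1)=\widetilde{\xi}_k(y_1)\wedge\widetilde{\xi}_k(x_1)\wedge\big(\mathsf{F}_1'(x_1)+v_0\big)=(-1)^{k^2}\,\mathcal{H}(x_1,y_1)=-\,\mathcal{H}(x_1,y_1),
\]
contradicting the constant sign of $\mathcal{H}$. Hence $\mathsf{F}_1$ is a continuous injection of $\partial_{\infty}\Delta\smallsetminus\{x_0\}$ into $\Rb^{k}$.

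It remains to upgrade this to an embedding $\partial_{\infty}\Delta\hookrightarrow S^k$. It suffices to show that $\mathsf{F}_1$ extends to a continuous injection $\widehat{\mathsf{F}}_1\colon\partial_{\infty}\Delta\to S^k=(\Rb^k)^{+}$, since then $\widehat{\mathsf{F}}_1$ is a continuous injection from a compact space into a Hausdorff space, hence an embedding. For this I would analyze the cluster set $C\subseteq S^k$ of $\mathsf{F}_1$ at $x_0$: running a limiting version of the computation above --- fixing $z\neq x_0$ and a sequence $x_n\to x_0$ along which $\widetilde{\xi}_k(x_n)$ converges and $\mathsf{F}_1(x_n)$ converges in $\Rb^k$, and comparing $\lim_n\mathcal{H}(x_n,z)$ with $\lim_n\mathcal{H}(z,x_n)$, which have opposite signs by the same parity while both limiting wedges are nonzero --- one sees that $C$ is disjoint from $\mathsf{F}_1(\partial_{\infty}\Delta\smallsetminus\{x_0\})$; as $C$ is a connected closed subset of $S^k\smallsetminus\mathsf{F}_1(\partial_{\infty}\Delta\smallsetminus\{x_0\})$, it is a single point $q$, and $\widehat{\mathsf{F}}_1(x_0):=q$ yields the extension. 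I expect this last step to be the main obstacle: the injectivity of $\mathsf{F}_1$ is a direct adaptation of Lemma~\ref{lem:thm5.3 case 3 general}, whereas the topological finalization --- controlling the behavior of the limit maps near $x_0$, where both the no-cut-point property of $\partial_{\infty}\Delta$ and the sign analysis of $\mathcal{H}$ are used --- requires genuine care.
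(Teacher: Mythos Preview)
Your argument tracks the paper's strategy: the map $\mathsf{F}_1$ is the paper's $f_z$, and the wedge function $\mathcal{H}$ together with the parity argument for injectivity match. The real divergence is in how you establish that $\mathcal{H}$ has constant sign. You assert that $(\partial_\infty\Delta\smallsetminus\{x_0\})^{(2)}$ is connected because $\partial_\infty\Delta\smallsetminus\{x_0\}$ is connected, locally connected, metrizable, and not an arc. But if $\partial_\infty\Delta$ has local cut pairs---equivalently, if $\Delta$ splits over a two-ended subgroup---then $\partial_\infty\Delta\smallsetminus\{x_0\}$ can have cut points (namely the partners of $x_0$ in cut pairs), and the connectivity of its ordered pair space is not the one-line fact you treat it as. This is exactly where the paper invests its effort: rather than claiming connectivity of the full pair space, it proves a separate lemma (Lemma~\ref{dense-connected}) showing there exists a \emph{specific} basepoint $z$ for which the set $X(z)=\{w:\partial_\infty\Delta\smallsetminus\{z,w\}\ \textup{is connected}\}$ is dense, and then runs a chaining argument through points of $X(z)$ and connected neighborhoods (using local connectivity of the boundary) to deduce $\mathcal{H}_z(x,y)\,\mathcal{H}_z(y,x)>0$ for all $x\neq y$. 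Your choice of $x_0$ is arbitrary, and the connectivity statement you invoke is precisely the thing that needs proving.

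Your extension step also has a gap. The sign analysis does show that the cluster set $C$ of $\mathsf{F}_1$ at $x_0$ is disjoint from the image $\mathsf{F}_1(\partial_\infty\Delta\smallsetminus\{x_0\})$, but ``$C$ is connected and disjoint from the image, hence a single point'' does not follow: nothing you have written prevents $C$ from being a nondegenerate continuum in the complement of the image, in which case $\mathsf{F}_1$ has no continuous extension at $x_0$ at all. The paper sidesteps this by observing that $f_z$ is \emph{proper} and then passing directly to the one-point compactifications; you should aim for properness (i.e., $C\subset\{\infty\}$) rather than the weaker disjointness statement.
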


Before we give the proof of Theorem \ref{embedding-sphere1} we will need the following lemma about cut pairs in the Gromov boundary of a one-ended hyperbolic group.

\begin{lemma}\label{dense-connected} Let $\Delta$ be an one-ended hyperbolic group whose Gromov boundary is not the circle. There exists a point $z\in \partial_{\infty}\Delta$ such that the set $$X(z):=\big\{w\in \partial_{\infty}\Delta: \partial_{\infty}\Delta \smallsetminus \{z,w\} \ \textup{is connected} \big\}$$ is a dense subset of $\partial_{\infty}\Delta$.\end{lemma}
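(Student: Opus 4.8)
The plan is to exploit Bowditch's theory of cut points and cut pairs in connected boundaries of one-ended hyperbolic groups, together with the JSJ machinery of Bowditch (or Guirardel--Levitt), to locate a single point $z$ that witnesses ``many'' non-separating pairs. Recall that for a one-ended hyperbolic group $\Delta$, the boundary $\partial_\infty\Delta$ is connected and locally connected, has no global cut points (Bowditch--Swarup), and its global cut pairs organize into a canonical $\Delta$-tree $T$ (the JSJ tree over two-ended subgroups). The vertices of $T$ come in types: \emph{maximal hanging Fuchsian} pieces, \emph{rigid} pieces, and \emph{cylinder}/edge vertices corresponding to the cut pairs themselves; the case $\partial_\infty\Delta \cong S^1$ is exactly the case where $\Delta$ is virtually Fuchsian, which we have excluded by hypothesis. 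So $T$ is a non-trivial tree and there is genuine structure to use.

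First I would reduce to the case where $\partial_\infty\Delta$ is not a circle, since otherwise the theorem is vacuous (no point $z$ can work, but also the hypothesis of Theorem~\ref{embedding-sphere1} is designed so this lemma is only invoked when the boundary is not $S^1$; so strictly I should state and prove it under that standing assumption, as in the statement). Next I would pick any point $z$ lying in a non-singleton local cut set in a \emph{controlled} position---concretely, a point $z$ that is not a cut point (there are none) and such that $z$ belongs to infinitely many cut pairs, i.e.\ $z$ maps to a vertex of $T$ of valence $\geq 2$ that is adjacent to infinitely many cut-pair (cylinder) vertices. Such a $z$ exists because $\partial_\infty\Delta$ is perfect and the action of $\Delta$ on $T$ is cocompact with infinite edge-stabilizer orbits; indeed the fixed point $z$ of a suitable two-ended subgroup stabilizing an edge of $T$ works, and its $\Delta$-orbit of ``partner points'' $w$ such that $\{z,w\}$ is a cut pair is infinite, hence (being $\Delta$-invariant and $\partial_\infty\Delta$ minimal and perfect) we need an extra argument to get density. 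For density I would instead argue: the set of $w$ with $\{z,w\}$ a cut pair accumulates everywhere because, as one moves through the tree of cut pairs based at $z$, the corresponding ``sides'' of the decomposition shrink to points, so every open set of $\partial_\infty\Delta$ eventually contains such a $w$. This is where Proposition~\ref{prop:not circle implies 2-eps connected} does \emph{not} directly help (it gives one non-cut pair, not a dense family), so the real content is the density statement.

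The key steps in order: \textbf{(1)} invoke connectedness, local connectedness, and no global cut points for $\partial_\infty\Delta$ one-ended hyperbolic (Bowditch, Swarup); \textbf{(2)} set up the canonical JSJ tree $T$ over two-ended subgroups and its vertex/edge classification, noting $\partial_\infty\Delta \not\cong S^1$ forces $T$ non-trivial or a rigid/hanging-Fuchsian vertex with non-circle boundary; \textbf{(3)} choose $z$ as a point stabilized by a two-ended edge group, so that the family $\mathcal{C}_z = \{w : \partial_\infty\Delta\smallsetminus\{z,w\}\ \text{connected}\}$ contains all ``partners'' of $z$ across cut pairs plus all points in the same connected side-pieces; here I would carefully note that removing a \emph{non-separating} pair keeps things connected, and that most pairs $\{z,w\}$ are non-separating once $w$ is not the unique cut-pair partner in a given direction; \textbf{(4)} prove density of $\mathcal{C}_z$: given a non-empty open $U \subseteq \partial_\infty\Delta$, use cocompactness of $\Delta \curvearrowright T$ and the fact that diameters of the pieces hanging off the branches at $z$ go to zero, to find a piece entirely inside $U$ and pick $w \in U$ realizing a non-separating pair with $z$.

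The main obstacle I expect is \textbf{step (4)}, the density argument: one must show that, as we traverse the (possibly complicated, with rigid vertices) tree of cut pairs emanating from a fixed $z$, the complementary pieces become uniformly small, and that within each small piece one can always find a point $w$ for which $\{z,w\}$ is genuinely \emph{non}-separating (not itself one of the cut pairs). This requires knowing that cut pairs are ``sparse'' inside each piece---true because in a hanging Fuchsian piece the cut pairs correspond to a lamination whose complement is dense, and in a rigid piece there are no cut pairs at all in the interior---and then patching these local statements across the tree using local connectedness to ensure the diameters shrink. An alternative, possibly cleaner route, which I would try first, is to avoid the full JSJ and instead argue directly: fix $z$ to be a conical limit point that is the attracting fixed point of some loxodromic $\gamma$; then $\gamma^{n}$ pushes any non-separating pair $\{p,q\}$ (which exists by Proposition~\ref{prop:not circle implies 2-eps connected}) towards $z$, and a limiting/openness argument shows the partners of $z$ are dense---the delicate point being to preserve the non-separating property in the limit, for which local connectedness of $\partial_\infty\Delta$ and the openness of the set of non-separating pairs (a fact one would need to establish, e.g.\ via \cite{Bowditch_relhyp}-style cut-point analysis) are the crucial inputs.
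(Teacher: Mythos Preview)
Your proposal has genuine gaps in both suggested routes. In the JSJ approach, step (3) is confused: the cut-pair ``partners'' of $z$ are precisely those $w$ for which $\partial_\infty\Delta\smallsetminus\{z,w\}$ is \emph{disconnected}, so they lie in the complement of $X(z)$, not in it. More importantly, you never actually supply the density argument (step (4)); your sketch---pieces along branches of the JSJ tree shrinking to zero diameter---does not obviously place a point of $X(z)$ inside an arbitrary open $U$, since there is no reason those pieces should cover $U$ (and when the boundary has no cut pairs the JSJ tree is a point, so your chosen $z$ does not even exist, though that case is trivial). The dynamical alternative also fails as written: applying $\gamma^n$ to a non-separating pair $\{p,q\}$ drives both coordinates to $z=\gamma^+$, producing no new partners of $z$; and you yourself flag that the required openness of the non-separating-pair condition would still need separate proof.

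The paper's proof is entirely different and avoids the JSJ machinery. Starting from a single non-separating pair $\{z,w\}$ (Proposition~\ref{prop:not circle implies 2-eps connected}), and given any open $U$, it builds a bi-infinite sequence $(x_n)_{n\in\Zb}$ with $x_n\to z$ and $x_{-n}\to w$, engineered (via a quasi-geodesic sequence $(\gamma_k)$ from $w$ to $z$ and a loxodromic $\delta$ with both fixed points in $U$) so that every consecutive pair $\{x_p,x_{p+1}\}$ is a $\Delta$-translate of a pair of the form $\{z,u\}$ with $u\in U$. The decisive external ingredient is a lemma of Barrett \cite[Lem.\ 2.5]{Barrett}, which guarantees that some consecutive pair in such a sequence is itself non-separating; translating back then yields a point of $X(z)\cap U$. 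This argument uses only local connectedness, minimality of the $\Delta$-action, and the convergence-group property, and works uniformly whether or not the boundary has any cut pairs at all.
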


\begin{proof} The group $\Delta$ is one-ended, hence its Gromov boundary $\partial_{\infty}\Delta$ is locally connected and has no global cut points by \cite{Bowditch-connected}. Since $\partial_{\infty}\Delta$ is not the circle, we may find distinct points $z,w\in \partial_{\infty}\Delta$, such that $\partial_{\infty}\Delta \smallsetminus \{z,w\}$ is connected. In particular, the set $X(z)$ is non-empty.
 
Now we prove that $X(z)$ intersects any open subset of $\partial_{\infty}\Delta$. Let $U\subset \partial_{\infty}\Delta$ be an open set. We may choose a sequence of elements $(\gamma_n)_{n\in \mathbb{Z}}$ of $\Delta$ such that $$\lim_{n\rightarrow \infty} \gamma_n=z, \lim_{n \rightarrow \infty} \gamma_{-n}=w, \lim_{n \rightarrow \infty} \gamma_n^{-1}=z^{-}, \lim_{n \rightarrow \infty} \gamma_{-n}^{-1}=w^{-}$$ and a finite subset $F\subset \Gamma$ such that $\gamma_{\ell+1}^{-1}\gamma_{\ell} \in F$ for every $\ell \in \mathbb{Z}$.

By the minimality of the action of $\Delta$ on $\partial_{\infty}\Delta$, may choose an infinite order element $\delta\in \Delta$ with $$\delta^{+}, \delta^{-}\in U\smallsetminus \{z,w, z^{-},w^{-}\}, \ g\delta^{+}\neq \delta^{\pm} \ \forall g\in F.$$ Then observe by the convergence group property that $\lim_n g\delta^{2n}w=g\delta^{+}$, $\lim_n \delta^{-n}g\delta^{2n}w=\delta^{-}$, hence we may choose $q>1$ large enough such that $\delta^{-q}g\delta^{2q}w \in U$ for every $g \in F$ and $\delta^qw, \delta^{2q} w, \delta^q z \in U\smallsetminus \{z^{-},w^{-}\}$.

Now we consider the sequence $(x_{n})_{n\in \mathbb{Z}}$ of points in $\partial_{\infty}\Delta$, defined as follows: $$x_{2k}:=\gamma_{k}\delta^q z,\ x_{2k+1}=\gamma_k \delta^{2q}w,\ k \in \mathbb{Z}.$$

Observe that since $\delta^qz, \delta^{2q}w\in U\smallsetminus \{z^{-},w^{-}\}$, we have $\lim_{n\rightarrow \infty}x_n=z$ and $\lim_{n \rightarrow -\infty} x_n=w$. Since $\partial_{\infty}\Delta\smallsetminus \{z,w\}$ is connected, \cite[Lem.\ 2.5]{Barrett} implies that there exists $p \in \mathbb{Z}$ such that $\partial_{\infty}\Delta \smallsetminus \{x_{p},x_{p+1}\}$ is connected.

 If $p\in \mathbb{Z}$ is even, $p=2r$ for some $r\in \mathbb{Z}$, then $\partial_{\infty}\Delta\smallsetminus \{\gamma_r \delta^q z, \gamma_{r}\delta^{2q}w\}\cong \partial_{\infty}\Delta\smallsetminus \{z, \delta^{q}w\}$ is connected. In particular, $\delta^q w\in X(z)$ and $\delta^q w\in U$, and hence $X(z)\cap U$ is non-empty.

 If $p\in \mathbb{Z}$ is odd, $p=2r+1$ for some $r\in \mathbb{Z}$, then $$\partial_{\infty}\Delta\smallsetminus \{\gamma_r \delta^{2q} w, \gamma_{r+1}\delta^{q}z\}\cong \partial_{\infty}\Delta\smallsetminus \{z, \delta^{-q}\gamma_{r+1}^{-1}\gamma_r \delta^{2q}w\}$$ is connected and $\delta^{-q}\gamma_{r+1}^{-1}\gamma_{r}\delta^{2q}w\in X(z)$. By the choice of $\delta\in \Delta$ and $p$ we have $\delta^{-q}\gamma_{r+1}^{-1}\gamma_{r}\delta^{2q}w\in U$ and hence $X(z)\cap U$ is non-empty.

This shows that the set $X(z)\subset \partial_{\infty}\Delta$ intersects every open subset of $ \partial_{\infty}\Delta$ and hence is dense.\end{proof}

\begin{proof}[Proof of Theorem \ref{embedding-sphere1}] If $\partial_{\infty}\Delta \cong S^1$ then the conclusion is immediate. So we may assume that $\partial_{\infty}\Delta$ is not the circle and by Lemma \ref{dense-connected}, there is a point $z\in \partial_{\infty}\Delta$ such that the set $X(z):=\{w\in \partial_{\infty}\Delta: \partial_{\infty}\Delta \smallsetminus \{z,w\} \ \textup{is connected}\}$ is dense in $\partial_{\infty}\Delta$.

Since $\rho$ is $k$-Anosov, by transversality, the map $f_{z}\colon \partial_{\infty}\Delta \smallsetminus \{z\} \rightarrow \mathbb{P}(\xi_{\rho}^{k+1}(z))\smallsetminus \mathbb{P}(\xi_{\rho}^k(z))$, $$f_{z}(x)=\big[\xi_{\rho}^{k+1}(x)\cap \xi_{\rho}^{k+1}(z)\big],\ x\neq z$$ is well-defined and proper. 

To prove the statement it is enough to check that the map $f_{z}$ is injective. Indeed, in this case $f_{z}$ will extend to an embedding between the one-point compactifications $f_{z}'\colon \partial_{\infty}\Delta \xhookrightarrow{} S^k$. 

By transversality, the image of the map $\tau_{k}^{+}\circ \xi_{\rho}^k:\partial_{\infty}\Delta \smallsetminus \{z\}\rightarrow \mathbb{P}(\wedge^k \mathbb{R}^{2k+1})$ avoids the hyperplane $\mathbb{P}(\tau_{k}^{-}(\xi_{\rho}^{k+1}(z)))$ and hence lifts to a continuous map $\widetilde{\xi}_{k}:\partial_{\infty}\Delta\smallsetminus \{z\}\rightarrow \mathbb{S}(\wedge^k \mathbb{R}^{2k+1})$. Thus, the map $\mathcal{H}_z\colon (\partial_{\infty}\Delta \smallsetminus \{z\})^{(2)}\rightarrow \mathbb{R}$ given by $$\mathcal{H}_z(x,y)=\widetilde{\xi}_k(x)\wedge \widetilde{\xi}_k(y)\wedge f_{z}(y),\ x\neq y,$$ is well-defined, continuous and non-zero by transversality. Now we prove the following claim.
\medskip

\noindent {\em Claim: $\mathcal{H}_z(x,y)\ \mathcal{H}_z(y,x)>0$ for every $x\neq y$.}

Let $x,y\in \partial_{\infty}\Delta \smallsetminus \{z\}$ and $U_x,U_y\subset \partial_{\infty}\Delta \smallsetminus \{z\}$ be arbitrary disjoint open neighbourhoods containing $x,y\neq z$ respectively. Since $\partial_{\infty}\Delta$ is locally connected \cite{Bowditch-connected}, we may assume that $U_x,U_y$ are connected. Since $X(z)$ is dense in $\partial_{\infty}\Delta$ we may choose $x'\in U_x \cap X(z)$, $x''\in U_x\cap X(z)$, $x'\neq x''$, and $y'\in U_y \cap X(z)$. Now observe that $(x,y)$ and $(x',y')$ lie in the connected subset $U_x\times U_y$ of $(\partial_{\infty}\Delta \smallsetminus \{z\})^{(2)}$. The pairs $(x',y')$ and $(x',x'')$, lie in the connected subset $\{x'\} \times (\partial_{\infty}\Delta \smallsetminus \{z,x'\})$, and $(x',x'')$ and $(y',x'')$ lie in the connected set $(\partial_{\infty}\Delta \smallsetminus \{z,x''\})\times \{x''\}$. 
Finally, $(y',x'')$ and $(y,x'')$ lie in the connected set $U_y \times \{x''\}$, and $(y,x'')$ and $(y,x)$ lie in the connected set $\{y\} \times U_x$.
We conclude that, since $\mathcal{H}_{z}$ is continuous, and $\mathcal{H}_z(x,y),\mathcal{H}_z(y,x)\neq 0$, we have $\mathcal{H}_z(x,y)\mathcal{H}_z(y,x)>0$ for every $x\neq y$. \qed \medskip

In particular, if $x,y\neq z$ and $f_{z}(x)=f_{z}(y)$ then necessarily $x=y$, otherwise, since $k\in \mathbb{N}$ is odd, we would have $\mathcal{H}_z(x,y)=-\mathcal{H}_z(y,x)$, which is impossible by the previous claim. We finally conclude that $f_z$ is injective and this finishes the proof of the theorem.\end{proof}

\section{Constraints in high cohomological dimension} \label{sec:high cd}
Recall that a representation $\rho\colon \Gamma \rightarrow \mathsf{SL}_d(\mathbb{R})$ is called a Benoist representation if $\rho$ has finite kernel and $\rho(\Gamma)$ preserves and acts properly discontinuously and cocompactly on a strictly convex domain of $\mathbb{P}(\mathbb{R}^d)$.

\begin{theorem} \label{thm:high cd}Let $d\geq 4$.
If $\rho\colon (\Gamma,\peripherals) \to \SL_d(\Rb)$ is a relatively 1-Anosov representation, and $\Gamma$ is a torsion-free group of cohomological dimension at least $d-1$, then $\peripherals = \varnothing$, $\Gamma$ is hyperbolic and $\rho$ is a Benoist representation.\end{theorem}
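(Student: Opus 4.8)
The plan is to establish, in order: that $\rho$ is faithful; that $\partial(\Gamma,\peripherals)$ is connected and not a circle; that $\rho(\Gamma)$ preserves a properly convex domain on which $\Gamma$ acts cocompactly; that $\peripherals=\varnothing$; and finally that this domain is strictly convex. The last two facts then give that $\Gamma$ is hyperbolic and that $\rho$ is a Benoist representation. To begin, since $\Gamma$ is torsion-free and the limit map $\xi_\rho^1$ is a nonconstant (indeed injective) $\rho$-equivariant map, Observation~\ref{obs1} forces $\ker\rho=\{1\}$. By Theorem~\ref{upperbound-dim} applied with $k=1$ and $d\ge4$ (so that none of the exceptional cases occur), $\dim\partial(\Gamma,\peripherals)\le d-2$. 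Also $\partial(\Gamma,\peripherals)\not\cong S^1$, since otherwise $\Gamma$ would be virtually a surface group by Theorem~\ref{thm:convergence groups on S1 are Fuchsian}, forcing $\mathrm{cd}(\Gamma)\le 2<d-1$.

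Next I would rule out the case that $\partial(\Gamma,\peripherals)$ is disconnected. By Theorem~\ref{thm:splittings from disconnected Bowditch boundary} together with torsion-freeness (so that edge groups are trivial), $\Gamma$ would decompose nontrivially as a free product $\Gamma_1\ast\cdots\ast\Gamma_m\ast F_r$ whose free factors $\Gamma_i$ are relatively quasiconvex --- hence each $\rho|_{\Gamma_i}$ is again relatively $1$-Anosov, with $\partial(\Gamma_i,\peripherals_i)$ embedding into $\partial(\Gamma,\peripherals)$ --- and are each either peripheral or have connected non-singleton Bowditch boundary. Since the cohomological dimension of a free product is the maximum of those of its factors and $\mathrm{cd}(\Gamma)\ge d-1\ge 3$, some $\Gamma_{i_0}$ has $\mathrm{cd}(\Gamma_{i_0})\ge d-1$. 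This $\Gamma_{i_0}$ cannot be a peripheral factor, because a peripheral subgroup of a relatively $1$-Anosov representation into $\SL_d(\Rb)$ preserves a properly convex domain along a horosphere of dimension $d-2$, hence has cohomological dimension at most $d-2$ (see \cite{ZZ1,ZZ2}). Applying the connected case of the theorem (proved below) to $\Gamma_{i_0}$ would show $\rho|_{\Gamma_{i_0}}$ is a Benoist representation with $\Gamma_{i_0}$ hyperbolic, so $\partial_\infty\Gamma_{i_0}\cong S^{d-2}$; but then $\partial(\Gamma,\peripherals)$ would properly contain the $(d-2)$-sphere $\Lambda_{\Gamma_{i_0}}$, contradicting Theorem~\ref{thm:excluding spheres} with $k=1$. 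Hence $\partial(\Gamma,\peripherals)$ is connected.

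Now assume $\partial(\Gamma,\peripherals)$ is connected and not a circle. By Proposition~\ref{prop:rel Anosov preserving domain}, $\rho(\Gamma)$ preserves a properly convex domain $\Omega\subset\proj(\Rb^d)$, on which $\Gamma$ acts freely and properly discontinuously (faithfulness plus torsion-freeness). As $\Omega$ is contractible, $M:=\Omega/\Gamma$ is an aspherical $(d-1)$-manifold, so $\mathrm{cd}(\Gamma)=\mathrm{cd}(\pi_1M)\le d-1$, and the hypothesis forces $\mathrm{cd}(\Gamma)=d-1$. If $M$ were non-compact, then $H^{d-1}(M;\mathcal{L})=0$ for every local system $\mathcal{L}$ by Poincar\'e duality, giving $\mathrm{cd}(\Gamma)\le d-2$; so $M$ is closed, $\Gamma$ acts cocompactly on $\Omega$, and $\Gamma$ is a Poincar\'e duality group of dimension $d-1$. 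It remains to show $\peripherals=\varnothing$. By Proposition~\ref{prop:relAnosov implies DGF}, $\Gamma$ acts on the topological $(d-2)$-sphere $\partial\Omega$ as a geometrically finite convergence group with limit set $\Lambda=\xi_\rho^1(\partial(\Gamma,\peripherals))$. If $\Lambda\ne\partial\Omega$, the domain of discontinuity $\Delta:=\partial\Omega\smallsetminus\Lambda$ would be nonempty and open, $\Gamma$ would act freely and properly discontinuously on $\Omega\cup\Delta$, and $W:=(\Omega\cup\Delta)/\Gamma$ would be a $(d-1)$-manifold with nonempty boundary and interior $M$; by Poincar\'e--Lefschetz duality $H^{d-1}(W;\mathcal{L})=0$ for all $\mathcal{L}$, yet $W\simeq M$ is a $K(\Gamma,1)$, contradicting $\mathrm{cd}(\Gamma)=d-1$. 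So $\Lambda=\partial\Omega$. If some $P\in\peripherals$ existed, its fixed point $p\in\partial\Omega$ would be a bounded parabolic point, so $P$ would act freely, properly discontinuously and cocompactly on $\partial\Omega\smallsetminus\{p\}\cong\Rb^{d-2}$, making $P$ a Poincar\'e duality group of dimension $d-2$; then $(\Gamma,\peripherals)$ would be a Poincar\'e duality pair of dimension $d-1$ with nonempty peripheral structure, whence $\Gamma$ would be the fundamental group of a compact aspherical $(d-1)$-manifold with nonempty boundary, forcing $\mathrm{cd}(\Gamma)\le d-2$ --- a contradiction. (Alternatively, after passing to a finite-index subgroup with nilpotent peripherals one could invoke the Manning--Wang formula $\dim\partial(\Gamma,\peripherals)=\mathrm{cd}(\Gamma)$ for a contradiction with $\dim\partial(\Gamma,\peripherals)\le d-2$.) Therefore $\peripherals=\varnothing$, so $\Gamma$ is word hyperbolic; since a properly convex domain divided by a word-hyperbolic group is strictly convex by \cite{BenoistI}, $\Omega$ is strictly convex, and hence $\rho$ is a Benoist representation.

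The step I expect to be the main obstacle is the exclusion of peripheral subgroups once $\Gamma$ has been recognized as the fundamental group of a closed aspherical $(d-1)$-manifold: making this rigorous requires either a Poincar\'e-duality-pair rigidity statement --- to the effect that a relatively hyperbolic pair whose ambient group is a $\mathrm{PD}$ group must be a $\mathrm{PD}$ pair, and so admits a compact model with nonempty boundary and has strictly smaller cohomological dimension --- or a careful application of Manning--Wang-type formulas relating $\dim\partial(\Gamma,\peripherals)$ to cohomological dimension, which in turn requires first reducing from virtually nilpotent to genuinely nilpotent peripheral subgroups. A secondary technical point to pin down is the claim that peripheral subgroups of relatively $1$-Anosov representations into $\SL_d(\Rb)$ have cohomological dimension at most $d-2$.
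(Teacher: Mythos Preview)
Your overall architecture matches the paper's: reduce to the connected case, produce an invariant properly convex domain via Proposition~\ref{prop:rel Anosov preserving domain}, use $\mathrm{cd}(\Gamma)\ge d-1$ to force cocompactness, deduce $\peripherals=\varnothing$, and conclude via Benoist. The cocompactness step and the appeal to \cite{BenoistI} are exactly what the paper does.

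The place where you diverge, and where you correctly sense trouble, is the deduction $\peripherals=\varnothing$ once cocompactness is in hand. Your route through Poincar\'e duality pairs is not justified: knowing that $\Gamma$ is a closed $\mathrm{PD}_{d-1}$ group and that each $P$ is a $\mathrm{PD}_{d-2}$ group does \emph{not} make $(\Gamma,\peripherals)$ a $\mathrm{PD}$ pair, and indeed the conclusion you draw (that $\Gamma$ would then be $\pi_1$ of a compact aspherical $(d-1)$-manifold with nonempty boundary) contradicts what you have already established. The Manning--Wang alternative does not help either, since their formula requires the extra hypothesis $\mathrm{cd}(\Gamma)<\mathrm{cd}(\Gamma,\peripherals)$. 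The paper's argument here is much shorter and avoids all of this: once $\rho(\Gamma)$ divides a properly convex domain, \cite[Prop.~10.3]{DGK} says no nontrivial element of $\rho(\Gamma)$ has all eigenvalues of modulus one; but by \cite[Prop.~4.2]{ZZ1} every element in the image of a peripheral subgroup does have all eigenvalues of modulus one. Hence $\peripherals=\varnothing$ immediately. This eigenvalue argument is the key idea you are missing.

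For the disconnected case, your use of Theorem~\ref{thm:excluding spheres} is a genuinely different and perfectly valid alternative to the paper's argument. The paper instead observes that for the Benoist factor $\Gamma_1$ one has $\Omega_1=\proj(\Rb^d)\smallsetminus\bigcup_{\eta\in\partial_\infty\Gamma_1}\proj(\xi_\rho^{d-1}(\eta))$, so by transversality $\xi_\rho^1(\partial(\Gamma_i,\peripherals_i))\subset\Omega_1$ for $i\ne 1$; but then $\proj(\xi_\rho^{d-1}(x))$ for $x\in\partial(\Gamma_i,\peripherals_i)$ must meet $\partial\Omega_1=\xi_\rho^1(\partial_\infty\Gamma_1)$, contradicting transversality. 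Your sphere-exclusion approach is more conceptual, while the paper's is more elementary and self-contained. One caution: your justification that a peripheral factor has $\mathrm{cd}\le d-2$ (``preserves a properly convex domain along a horosphere'') presupposes an invariant domain, which in the disconnected setting you have not yet produced; this point deserves a more careful treatment.
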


The proof of Theorem \ref{thm:high cd} follows the proof of \cite[Thm.\ 1.5]{CanaryTsouvalas}
\begin{proof} 
First, we assume that $\partial(\Gamma, \mathcal{P})$ is connected. Note that since $\Gamma$ cannot be a free group or surface group, $\partial(\Gamma, \mathcal{P})$ is not homeomorphic to $S^1$ by Theorem \ref{thm:convergence groups on S1 are Fuchsian}. Let $V:=\langle \xi_{\rho}^1(\partial(\Gamma,\mathcal{P}))\rangle$. By Propositions~\ref{prop:not circle implies 2-eps connected} and~\ref{prop:rel Anosov preserving domain}(1) the limit set $\xi_{\rho}^1(\partial(\Gamma, \mathcal{P}))$ is contained in an affine chart $A$ of $\mathbb{P}(V)$. 
By Proposition~\ref{prop:rel Anosov preserving domain}(3), $\rho(\Gamma)$ preserves a properly convex domain $\Omega_\rho \subset \proj(\Rb^d)$.
By our assumption that $\textup{cd}(\Gamma)\geq d-1$, it follows that $\rho|_V(\Gamma)$ acts cocompactly on $\Omega_{\rho}$. Hence $\rho(\Gamma)$ cannot contain elements with all eigenvalues of modulus one~\cite[Prop.\ 10.3]{DGK} and $V=\mathbb{R}^d$. Since, for a relatively Anosov representation, all the eigenvalues of the image of a peripheral element have modulus one~\cite[Prop.\ 4.2]{ZZ1}, we conclude here we must have $\mathcal{P}=\varnothing$. Hence $\Gamma$ is word hyperbolic, $\rho$ is $1$-Anosov and (by cocompactness) $\partial\Omega_{\rho}=\xi_{\rho}^1(\partial_{\infty}\Gamma)$. Moreover, by Benoist's theorem \cite{BenoistI} the domain $\Omega_{\rho}$ is strictly convex.

It remains to rule out the case where $\partial(\Gamma,\mathcal{P})$ is disconnected. Indeed, if this is the case, since $\Gamma$ is torsion-free, by Theorem \ref{thm:splittings from disconnected Bowditch boundary}, there is a splitting $\Gamma=\Gamma_1 \ast \cdots \ast \Gamma_r$ and finite families of subgroups $\mathcal{P}_i$ of $\Gamma_i$ such that $\partial(\Gamma_i, \mathcal{P}_i)$ is connected for every $1\leq i \leq r$. Since $\textup{cd}(\Gamma)=\max_i \textup{cd}(\Gamma_i)$, we may assume that $\textup{cd}(\Gamma_1)\geq d-1$. By the previous case, we deduce that $\mathcal{P}_1=\varnothing$ and there is a properly convex domain $\Omega_1\subset \mathbb{P}(\mathbb{R}^d)$ 
such that $\rho(\Gamma_1)$ preserves and acts cocompactly on $\Omega_1$. Then $\rho|_{\Gamma_1}$ is $1$-Anosov and $$\Omega_1:=\mathbb{P}(\mathbb{R}^d)\smallsetminus \bigcup_{\eta \in \partial_{\infty}\Gamma_1}\mathbb{P}(\xi_{\rho}^{d-1}(\eta)).$$ In particular, by the transversality of the limit maps $\xi_{\rho}^1$ and $\xi_{\rho}^{d-1}$, for every $2\leq i \leq r$, $\xi_{\rho}^1\left(\partial(\Gamma_i,\mathcal{P}_i)\right)$ is contained in $\Omega_1$. This then implies: if $x\in \partial(\Gamma_i,\mathcal{P}_i)$ where $i \neq 1$, then the projective hyperplane $\mathbb{P}(\xi_{\rho}^{d-1}(x))$ has to intersect $\partial\Omega_1=\xi_{\rho}^1(\partial_{\infty} \Gamma_1)$; but this contradicts transversality since $\partial(\Gamma_i,\mathcal{P}_i)$ and $\partial_{\infty}\Gamma_1$ are disjoint.

Therefore, $\mathcal{P}=\varnothing$, $\Gamma$ is word hyperbolic group and $\rho(\Gamma)$ and $\rho$ is a Benoist representation.  \end{proof}

\section{Questions}

We know from the above that if $(\Gamma,\peripherals)$ is a relatively hyperbolic group and $\rho\colon(\Gamma,\peripherals)\to\mathsf{SL}_d(\mathbb{R})$ is a relatively $k$-Anosov representation with $d \geq 2k$, then the dimension of $\partial(\Gamma,\peripherals)$ is bounded above by $d-k-1$ (see also \cite{CanaryTsouvalas} for the non-relative case). We also know that, when $k=1$ and $\partial(\Gamma,\peripherals)$ is connected and not a circle, the image of $\rho$ preserves a domain in $\proj(\Rb^d)$ and hence $\partial(\Gamma,\peripherals)$ embeds into a $(d-2)$-sphere. 
As noted in \S\ref{subsec:excluding spheres} above, a natural question that then arises is the following:

\begin{question}\label{embedding-spheres} Suppose $\rho\colon (\Gamma,\mathcal{P})\rightarrow \mathsf{SL}_d(\mathbb{R})$ is a relatively $k$-Anosov representation and $d\geq 2k+1$. Does there exist an embedding of $\partial(\Gamma,\mathcal{P})$ into the sphere of dimension $d-k-1$?\end{question} 

More generally, it is known that groups admitting relatively Anosov representations must be relatively hyperbolic with virtually nilpotent peripheral subgroups (see Proposition~\ref{prop:relA peripherals are v nilpotent}), and must be finite extensions of linear groups, but no other restrictions are known (in the absence of bounds on the dimension of the target Lie group or of the domain group). This is similar to the situation in the non-relative case, where it remains possible that any linear hyperbolic group admits an Anosov representation.

\begin{question}
Do all linear relatively hyperbolic groups with virtually nilpotent peripheral subgroups admit relatively Anosov representations?
\end{question}

Beyond relatively Anosov representations, a larger, more flexible class of representations generalizing geometric finiteness in the setting of higher-rank Lie groups is given by the extended geometrically finite (EGF) representations introduced by Weisman \cite{Teddy_EGF}. Roughly speaking, these are defined in terms of extended boundary maps with good dynamical properties. In general, these maps are no longer homeomorphisms, but rather equivariant surjections from subsets of the flag space to the Bowditch boundary of the domain group. Our results here would not apply directly, but it is possible that some of the techniques can be adapted to the study of EGF representations. In particular, we can ask some of the following questions:
\begin{question}
Do all linear relatively hyperbolic groups admit EGF representations?
\end{question}

\begin{question}
If $(\Gamma,\Pc)$ is a relatively hyperbolic group with virtually nilpotent peripherals, and $\rho\colon (\Gamma,\Pc) \to \SL_3(\Rb)$ is an EGF representation, is $\Gamma$ necessarily a surface group or free group?

What happens without the restriction on the peripherals?
\end{question}

\printbibliography
\end{document}